\theoremstyle{plain}
\newtheorem*{Theo}{Theorem}
\newtheorem{Theorem}{Theorem}[section]
\newtheorem{Lem}[Theorem]{Lemma}
\newtheorem{Prop}[Theorem]{Proposition}
\newtheorem{Cor}[Theorem]{Corollary}
\newtheorem{Def-Prop}[Theorem]{Definition-Proposition}
\theoremstyle{definition}
\newtheorem{Def}[Theorem]{Definition}
\newtheorem{Not}[Theorem]{Notation}
\newtheorem*{Defi}{Definition}
\newtheorem{Exa}[Theorem]{Example}
\newtheorem{Rem}[Theorem]{Remark}
\newtheorem*{Rema}{Remarks}
\date{}
\DeclareMathOperator{\Mor}{Mor}
\DeclareMathOperator{\End}{End}
\DeclareMathOperator{\sgn}{sgn}
\DeclareMathOperator{\Rep}{Rep}
\DeclareMathOperator{\Tr}{Tr}
\newcommand{\Circt}{\mathop{\ooalign{$\ovoid$\cr\hidewidth\raise-.05ex\hbox{$\scriptstyle\mathsf T\mkern3.5mu$}\cr}}} 
\newcommand{\smCirct}{\mathop{\ooalign{$\scriptstyle\ovoid$\cr\hidewidth\raise-.05ex\hbox{$\scriptscriptstyle\mathsf T\mkern2.8mu$}\cr}}}  
\DeclareMathOperator{\aA}{\mathscr{A}}
\DeclareMathOperator{\cC}{\mathrm{C}}
\newcommand{\SU}{\mathit{SU}} 
\newcommand{\SL}{\mathit{SL}}
\newcommand{\G}{\mathbb{G}}
\newcommand{\X}{\mathbb{X}}
\newcommand{\Y}{\mathbb{Y}}
\newcommand{\C}{\mathbb{C}}
\newcommand{\Z}{\mathbb{Z}}
\newcommand{\Zz}{Z}
\newcommand{\N}{\mathbb{N}}
\newcommand{\R}{\mathbb{R}}
\newcommand{\cat}[1]{\mathcal{#1}}
\newcommand{\Co}[1]{C(#1)}
\newcommand{\CoL}[1]{C(#1)} 
\mathchardef\mhyph="2D
\newcommand{\bimod}[2]{#1\mhyph\mathrm{mod}\mhyph#2}
\newcommand{\PW}{\mathrm{P}}
\newcommand{\Rmat}{\mathcal{R}}
\newcommand{\Jop}{\mathcal{J}}
\newcommand{\triv}{\mathrm{triv}}
\newcommand{\id}{\mathrm{id}} 
\newcommand{\Hsp}{\mathscr{H}}
\newcommand{\Ef}{\cat{E}_\textrm{f}}
\newcommand{\qn}[1]{\lbrack #1 \rbrack_q}
\newcommand{\aqn}[1]{\llbracket #1 \rrbracket_q}
\newcommand{\atn}[1]{\llbracket #1 \rrbracket_{t}}
\newcommand{\we}{W}
\newcommand{\twe}{T}
\newcommand{\absv}[1]{\left | #1 \right |}
\newcommand{\norm}[1]{\left \| #1 \right \|}
\newcommand{\half}{\frac{1}{2}}
\newcommand{\ld}[2]{\tensor[_#1]#2{}}
\def\thm@space@setup{%
  \thm@preskip=\parskip \thm@postskip=0pt
}
\numberwithin{equation}{section} 
\begin{document}

\hyphenation{Wo-ro-no-wicz}

\title{Tannaka--Kre\u{\i}n duality for compact quantum homogeneous spaces. II. Classification of quantum homogeneous spaces for quantum $\SU(2)$}

\author{Kenny De Commer and Makoto Yamashita}
\maketitle

\begin{abstract}
\noindent We apply the Tannaka--Kre\u{\i}n duality theory for quantum homogeneous spaces, developed in the first part of this series of papers, to the case of the quantum $\SU(2)$ groups. We obtain a classification of their quantum homogeneous spaces in terms of weighted oriented graphs. The equivariant maps between these quantum homogeneous spaces can be characterized by certain quadratic equations associated with the braiding on the representations of $\SU_q(2)$. We show that, for $|q|$ close to 1, all quantum homogeneous spaces are realized by coideals.
\end{abstract}

\emph{Keywords}: compact quantum groups; $\cC^*$-algebras; Hilbert modules; ergodic actions; module categories

AMS 2010 \emph{Mathematics subject classification}: 17B37; 20G42; 46L08


\pagestyle{myheadings}
\markright{De Commer and Yamashita, Quantum homogeneous spaces for quantum SU(2)}
\section*{Introduction}

This is a continuation of our previous paper on the Tannaka--Kre\u{\i}n duality for quantum homogeneous spaces.  In this paper, we apply our general machinery to the case of the `quantum $\SU(2)$ group' $\SU_q(2)$, where $0<|q|\leq 1$~\cite{Wor3,Wor4}.

The study of $\SU_q(2)$ and its actions on noncommutative spaces has special significance in the study of compact quantum groups. The quantum $\SU(2)$ group was and remains the prime example of the matrix quantum group theory initiated by S.L.~Woronowicz. Because of its close connection to the Drinfeld--Jimbo quantized universal enveloping algebra $\mathcal{U}_q(\mathfrak{sl}_2(\mathbb{C}))$, it gives rise to an interesting deformation of the finite-dimensional representation theory of $\mathfrak{sl}_2(\mathbb{C})$.

Our goal here is to study operator algebras which allow $\SU_q(2)$, for some $q$, as an ergodic symmetry group. We shall refer to such operator algebras as \emph{quantum homogeneous spaces} for $\SU_q(2)$. For $q=1$, A.~Wassermann's classification~\cite{Was1} of the ergodic actions of the classical Lie group $\SU(2)$ implied that it has only `classical' quantum homogeneous spaces, in the sense that any such action can be obtained by induction from a (projective) representation of a closed subgroup of $\SU(2)$. In particular, there are only countably many isomorphism classes of such actions.

The situation for $\SU_q(2)$ with $|q|<1$ is drastically different. First, P.~Podle\'s showed that there is a continuum of non-isomorphic `quantum $2$-spheres' over $\SU_q(2)$ which do not necessarily correspond to quantum subgroups.  R.~Tomatsu~\cite{Tom1} extended this by classifying all quantum homogeneous spaces over $\SU_q(2)$ which are of coideal type.  In another direction, J.~Bichon, A.~de~Rijdt and S.~Vaes~\cite{BDV1} discovered that, for $|q|$ small enough, a family of non-coideal type quantum homogeneous spaces exists, admitting large multiplicities for spectral subspaces. This diversity makes it important to understand the general structure of a quantum homogeneous space for $\SU_q(2)$, and this will be the main focus point of this paper.

In \cite{DCY2}, it was shown that the quantum homogeneous spaces over a compact quantum group $\G$ can be classified by the indecomposable semi-simple module $\cC^*$-categories over the tensor $\cC^*$-category of finite-dimensional unitary representations of $\G$. In a purely algebraic setting, \emph{finite} indecomposable semi-simple module categories for $\Rep(\SL_q(2))$ were classified in \cite{Eti1}. However, in the case of $\SU_q(2)$, the extra positivity conditions which are present mean that we have to further refine the classification of purely algebraic module categories over $\Rep(\SU_q(2))$. On the other hand, we can treat the case of module categories having infinitely many irreducible objects (which was recently treated also in the purely algebraic case in \cite{Gel1}). Secondly, because of the above positivity, we can label the parameter space of our classification in a more convenient way since all matrices concerned can be diagonalized. We first introduce some terminology.

\begin{Defi}
An \emph{oriented graph} $\Gamma$ consists of a countable set $\Gamma^{(0)}$, the set of \emph{vertices}, and a countable set $\Gamma^{(1)}$, the set of \emph{(oriented)} edges, together with two maps $\Gamma^{(1)} \overset{s}{\underset{t}{\rightrightarrows}} \Gamma^{(0)}$ called the \emph{\underline{s}ource} and \emph{\underline{t}arget} map. In particular, we allow an infinite number of vertices and edges, and multiple edges as well as loops at vertices.  A \emph{cost} (or \emph{weight}) on an oriented graph is a positive real valued function $\we\colon \Gamma^{(1)}\rightarrow \mathbb{R}^+$ on the edge set.  When $v$ is a vertex, the \emph{source cost} $\we(v)\in \lbrack 0,+\infty\rbrack$ is the sum of the costs of all the edges leaving from $v$. We call an oriented graph \emph{symmetric} if it can be equipped with an involution $e\rightarrow\bar{e}$ on the edge set which interchanges source and target vertex of each edge. We call a cost on a symmetric graph \emph{balanced} if one can choose an involution satisfying $\we(e)\we(\bar{e})=1$.
\end{Defi}

\begin{Defi}
Let $\Gamma$ be an oriented graph, and let $\twe$ be a nonzero real number.  A \emph{fair and balanced $\twe$-cost} on $\Gamma$ is a balanced cost on $\Gamma$ such that the source cost at any vertex is equal to $\absv{\twe}$, and with an even number of loops at each vertex for the case $\twe > 0$.  A graph with a fair and balanced $\twe$-cost is also called a \emph{fair and balanced $\twe$-graph}.
\end{Defi}

\begin{Rema}\begin{enumerate}\item  We stress that in the above definitions, the involution is \emph{not} part of the data set. More precisely, an isomorphism between two fair and balanced $\twe$-graphs $(\Gamma_i,\we_i)$ is simply a couple of bijective maps $\phi^{(k)}\colon\Gamma_1^{(k)}\rightarrow \Gamma_2^{(k)}$ which intertwine the source, target and weight maps.
\item It is easily seen that the degree $\deg(\Gamma) = \sup_v \#\{e\mid s(e) = v\}$ of a fair and balanced $\twe$-graph $\Gamma$ is bounded. More precisely, $\deg(\Gamma)\leq \twe^2$.
\item Another way to look at a fair and balanced cost is by considering the associated `random walk', which assigns to each edge the probability
\[
P(e) =  \frac{\we(e)}{\sum_{f,s(e)=s(f)} \we(f)}= \frac{\we(e)}{|\twe|}.
\]
Then we see that, having fixed $\twe$, the random walk $P$ completely determines $\we$, and the only condition imposed on $P$ is that $P(e)P(\bar{e}) = \twe^{-2}$ (which one can could call \emph{$\twe^{-2}$-reciprocality}). However, we chose to work with the cost $\we$ since the condition $\we(e) \we(\bar{e}) = 1$ better reflects the crucial r\^ole of the inversion, and since the probabilistic nature of the formulation is not fundamentally used yet in the present work.
\end{enumerate}
\end{Rema}

\begin{Theo}[Theorem~\ref{TheoF3}]
For $0<|q|\leq 1$, the quantum homogeneous spaces over $\SU_q(2)$ are classified, up to equivariant Morita equivalence, by connected fair and balanced $q+q^{-1}$-graphs.
\end{Theo}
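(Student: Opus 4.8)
The plan is to reduce everything to module categories and classify those combinatorially. By the Tannaka--Kre\u{\i}n duality of \cite{DCY2}, passing from a quantum homogeneous space to its associated module category of spectral subspaces induces a bijection between equivariant Morita equivalence classes of quantum homogeneous spaces over $\SU_q(2)$ and unitary equivalence classes of indecomposable semisimple module $\cC^*$-categories over $\Rep(\SU_q(2))$. So it is enough to match the latter with connected fair and balanced $(q+q^{-1})$-graphs. I will use the familiar structure of $\Rep(\SU_q(2))$: its simple classes are the spins $U_s$, $s\in\half\N$; the fundamental object $U:=U_{1/2}$ is self-conjugate; it generates the category under tensor products and subobjects; the standard solution $r\colon U_0\to U\otimes U$ of the conjugate equations satisfies $r^*r=|q+q^{-1}|$; one has $U\otimes U\cong U_0\oplus U_1$ with $U_0$ the monoidal unit; and every morphism space is spanned by compositions of $r$, $r^*$ and identities --- the Temperley--Lieb presentation, with $|q+q^{-1}|$ the modulus of the loop parameter and the remaining sign governed by the Frobenius--Schur indicator of $U$, which is $-1$ when $q>0$ and $+1$ when $q<0$.

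\textbf{From a module category to a graph.} Given an indecomposable semisimple module $\cC^*$-category $\cat M$ over $\Rep(\SU_q(2))$, I would take $\Gamma^{(0)}$ to be the set of isomorphism classes of simple objects of $\cat M$. For simple $M,N$ the space $\Hom(N,U\otimes M)$ is a finite-dimensional Hilbert space, and self-conjugacy of $U$ gives, via Frobenius reciprocity built from $r$, a canonical relation between $\Hom(N,U\otimes M)$ and $\Hom(M,U\otimes N)$; composing the two directions produces a positive operator, and \emph{here positivity enters}: the $\cC^*$-structure lets one choose orthonormal bases of all these Hom-spaces simultaneously diagonalizing these operators. Declaring the chosen basis vectors of $\Hom(N,U\otimes M)$ to be the edges $e$ with $s(e)=[M]$, $t(e)=[N]$, the reciprocity relation becomes an involution $e\mapsto\bar e$ together with positive eigenvalues, which I record as a cost $\we(e)$; the normalization of the two reciprocity maps forces $\we(e)\we(\bar e)=1$, so the cost is balanced (concretely $\we(e)=\dim t(e)/\dim s(e)$ in terms of intrinsic dimensions). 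Connectedness of $\Gamma$ is exactly indecomposability of $\cat M$, since $U$ generates. From $U\otimes U\cong U_0\oplus U_1$ and $U_0\otimes M\cong M$ one gets $U\otimes U\otimes M\cong M\oplus(U_1\otimes M)$ for every simple $M$, and comparing dimensions (again using positivity to pass from the fusion matrix to the weighted one) shows that the source cost at every vertex equals $|q+q^{-1}|$, so the cost is fair. Finally, for a loop $e$ at a vertex $v$ the reciprocity operator on $\Hom(M_v,U\otimes M_v)$ squares to the Frobenius--Schur sign times a positive scalar, so when $q>0$ its eigenvalues occur in sign-reversed pairs and there is an even number of loops at $v$, while for $q<0$ there is no such constraint --- this is the parity clause. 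Everything here is invariant under unitary equivalence of module categories and under the permitted graph isomorphisms, so one gets a well-defined assignment on isomorphism classes.

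\textbf{From a graph to a module category, and bijectivity.} Conversely, starting from a connected fair and balanced $(q+q^{-1})$-graph $(\Gamma,\we)$, I would build $\cat M$: take a semisimple $\cC^*$-category with simple objects indexed by $\Gamma^{(0)}$, prescribe $U\otimes M_v:=\bigoplus_{s(e)=v}M_{t(e)}$ with the weighted edges furnishing the isometric inclusions, use the costs together with the parity clause to write down candidates for $r\otimes\id$ and $r^*\otimes\id$, and extend along the Jones--Wenzl tower to a prescription for all $U_s\otimes -$ and all Temperley--Lieb morphisms. The fairness condition is precisely what makes the bubble (partial-trace) relations close up with loop parameter $q+q^{-1}$; the balanced condition makes the two reciprocity maps mutually inverse; the parity condition matches the symmetry of the cup/cap to that of $r$. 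One then verifies the pentagon and the $\cC^*$-axioms, so that $\cat M$ becomes a genuine module $\cC^*$-category over $\Rep(\SU_q(2))$ (equivalently, one feeds the resulting datum into the reconstruction part of \cite{DCY2}); indecomposability follows from connectedness. Checking that this construction is inverse, at the level of isomorphism classes, to the one of the previous step completes the bijection and hence the theorem.

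\textbf{Main obstacle.} The genuinely hard part is the existence/positivity step: showing that the bare combinatorial datum of a connected fair and balanced $(q+q^{-1})$-graph can always be upgraded to an honest module $\cC^*$-category --- that the Hom-spaces built recursively along the graph carry \emph{positive-definite} inner products and that the associativity and Temperley--Lieb relations hold on the nose. Positivity is exactly the extra input beyond the purely algebraic classifications of \cite{Eti1} and \cite{Gel1}, and it is here that the hypothesis $0<|q|\le1$ and the precise normalizations encoded in ``fair'', ``balanced'' and the loop parity get used; carrying it out requires a careful analysis of the Gram matrices produced by the Jones--Wenzl recursion along $\Gamma$, which is where the bulk of the technical work lies.
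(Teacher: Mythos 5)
Your overall architecture matches the paper's: reduce via the Tannaka--Kre\u{\i}n duality of \cite{DCY2} to indecomposable semisimple module $\cC^*$-categories over $\Rep(\SU_q(2))$, extract a weighted graph from the Hom-spaces $\Mor(x_w, u\otimes x_v)$ and the Frobenius reciprocity operators built from $R_u$ (this is exactly the paper's construction of the anti-linear maps $\Jop_{vw}$ and their spectra), and go back. The forward direction of your sketch is essentially Proposition~\ref{LemGrphFromQFund} of the paper, including the polar-decomposition argument that forces $\we(e)\we(\bar e)=1$ and the parity of loops when $q>0$.

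The genuine gap is the reconstruction step, which you correctly single out as the ``main obstacle'' but do not resolve: you propose to build the module category by hand, prescribing $u\otimes M_v$, extending along the Jones--Wenzl tower, and verifying positivity of the Gram matrices of the recursively defined Hom-spaces. As written this is not a proof --- the positivity analysis is precisely the content you would need to supply, and nothing in your sketch supplies it. The paper avoids this entirely by invoking the universal property of $\Rep(\SU_q(2))$ as the tensor $\cC^*$-category freely generated by a $q$-fundamental solution (Theorem~\ref{ThmGen}, a known result of Turaev, Yamagami, Pinzari--Roberts, et al.): since \cite{DCY2} identifies module $\cC^*$-categories with tensor $\cC^*$-functors $\Rep(\SU_q(2))\rightarrow\Ef^J$, one only has to produce a pair $(\Hsp,\Rmat)$ in $\Ef^J$ with $(\Rmat^*\otimes 1)(1\otimes\Rmat)=-\sgn(q)\id$ and $\Rmat^*\Rmat=\aqn{2}$. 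From a fair and balanced graph this is a two-line definition ($\Jop_{vw}e=\rho(e)\we(e)^{1/2}\bar e$ for a suitable sign function $\rho$), and positivity is automatic because $\Ef^J$ is already a $\cC^*$-category. So the missing idea is not that your route is wrong --- a direct Temperley--Lieb/path-algebra construction can be made to work since $\aqn{2}\geq 2$ --- but that the universal property collapses your hard step to elementary linear algebra, and without either that citation or the actual Gram-matrix analysis your argument is incomplete exactly where the theorem's content lies.
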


The connected fair and balanced $2$-graphs are easy to classify in a direct way. Hence we obtain in particular a more conceptual proof (Proposition~\ref{PropW2}) of the above mentioned result of Wassermann for $\SU(2)$.  In the quantized setting, when $|q|$ is close enough to $1$, our classification implies that any quantum homogeneous space is of coideal type (see Theorem~\ref{TheoBound}), naturally generalizing the classical case $q=1$. On the other hand, as $q$ gets smaller, there arises a profusion of ergodic actions, since any symmetric graph $\Gamma$ with bounded degree (and an even number of loops at each vertex) admits at least one fair and balanced $\twe$-cost for $\twe= \pm \|\Gamma\|$.

Here is a short summary of the contents of this paper. In the \emph{first section}, we recall the definition of the quantum $\SU(2)$ groups, and briefly discuss some of the main results of \cite{DCY2} for these particular quantum groups. In the \emph{second section}, we prove the theorem stated above. The main observation here is that the representation category of $\SU_q(2)$ is in essence the Temperley--Lieb category (\cite{Wor4}, \cite{Ban2}), whose universal property can be exploited to encode semi-simple $\cC^*$-module categories over $\Rep(\SU_q(2))$ in terms of the combinatorial structure of weighted graphs. In the \emph{third section}, we give some more information on connected fair and balanced $\twe$-graphs. In the \emph{fourth section}, we give a more concrete classification of $SU_q(2)$-homogeneous spaces in the region $|q| \in (1-\varepsilon,1\rbrack$, for some small $\varepsilon$. In the \emph{fifth section}, a connected fair and balanced $q+q^{-1}$-graph $(\Gamma,\we)$ is shown to give rise to a particular, non-ergodic action on a $^*$-algebra $\aA$, having the associated ergodic actions as its corners. The $^*$-algebra $\aA$ can be explicitly given in terms of generators and relations determined by the weighted graph $(\Gamma,w)$. In the \emph{sixth section}, we show that equivariant maps between the quantum homogeneous spaces over $\SU_q(2)$ can be determined by certain quadratic equations on isometries associated with the weighted graphs.  Finally, in the \emph{seventh section}, we use some of the above results to determine  structural properties of the C$^*$-algebra underlying an ergodic action of $\SU_q(2)$: we show that the C$^*$-algebra is of type I if and only if it is of coideal type, and that the $K$-groups can be computed from the associated graph, using the resolution of the Baum--Connes conjecture for $SU_q(2)$ obtained by Voigt.

\paragraph{Acknowledgments} We would again like to thank T. Banica, R. Conti, Y. Kawahigashi, R. Meyer, S. Neshveyev, V. Ostrik, C. Pinzari, N. Snyder, S. Vaes, C. Voigt, and particularly R. Tomatsu, for valuable discussions both on the content and on the presentation of this paper.   M.Y. is supported by the Danish National Research Foundation through the Centre for Symmetry and Deformation (DNRF92).

\section{Preliminaries}

Throughout the paper $q$ is a real number with $0<|q|\leq 1$. The $q$-integer $(q^{-n} - q^{n})/(q^{-1} - q)$ is denoted by $\qn{n}$. We shall frequently employ its absolute value $\absv{\qn{n}}$, which we will denote by $\aqn{n}$.

The cyclic group of order $n$ is denoted by $\Zz_n$.

We will freely use notation and terminology as introduced in~\cite{DCY2}. Although we shall briefly recall these notations when necessary, the reader is nevertheless strongly encouraged to consult \cite{DCY2} beforehand, particularly its Section~2 
and the Appendix.

\begin{Def}\label{DefSUq2}
The $^*$-algebra $\PW(\SU_{q}(2))$ of \emph{regular functions on the compact quantum group $\SU_{q}(2)$} is the universal unital $^*$-algebra over $\C$ with generators $u_{ij}$, where $i,j\in \{1,2\}$, subject to the relations
\begin{equation}\label{EqSUq2DefGenRel1}
\begin{pmatrix}
u_{1 1}^* & u_{1 2}^*\\
u_{2 1}^* & u_{2 2}^*
\end{pmatrix}
=
\begin{pmatrix}
u_{2 2} & -q u_{2 1}\\
-q^{-1} u_{1 2} & u_{1 1}
\end{pmatrix}
\end{equation}
and
\begin{equation}\label{EqSUq2DefGenRel2}
\begin{pmatrix}
u_{1 1}^* & u_{2 1}^*\\
u_{1 2}^* & u_{2 2}^*
\end{pmatrix}
\begin{pmatrix}
u_{1 1} & u_{1 2}\\
u_{2 1} & u_{2 2}
\end{pmatrix}
=
\begin{pmatrix}
u_{1 1} & u_{1 2}\\
u_{2 1} & u_{2 2}
\end{pmatrix}
\begin{pmatrix}
u_{1 1}^* & u_{2 1}^*\\
u_{1 2}^* & u_{2 2}^*
\end{pmatrix}
=
\begin{pmatrix}
1 & 0\\
0 & 1
\end{pmatrix}.
\end{equation}
It is a Hopf $^*$-algebra whose coproduct is defined by
\[
\Delta(u_{i j}) = u_{i 1} \otimes u_{1 j} + u_{i 2} \otimes u_{2 j}
\]
for any $i, j \in \{1, 2\}$.
\end{Def}

Consider the matrix  $u \in M_2(\PW(\SU_q(2)))$ with components $(u_{i j})_{i j}$, and the matrix $\bar{u}$ with components $(u_{i j}^*)_{i j}$.  If we put
\begin{equation}\label{EqFmat}
F = \begin{pmatrix}
0 & |q|^{\half} \\
-\sgn(q) |q|^{-\half} & 0
\end{pmatrix},
\end{equation}
the defining relations~\eqref{EqSUq2DefGenRel1} and~\eqref{EqSUq2DefGenRel2} simply say that $u$ is unitary and $\bar{u} = F^{-1} u F$.

\begin{Rem}
At the classical limit $q=1$, we obtain the Hopf $^*$-algebra of the matrix coefficients of finite-dimensional unitary representations of $\SU(2)$.  On the other extreme, when $q = -1$, the quantum group $\SU_{-1}(2)$ can be interpreted as the free orthogonal quantum group $O_2^+$~(\cite{Ban3}).
\end{Rem}

Let $\frac{1}{2}\N$ be the set $\{0, \half, 1, \frac{3}{2}, \ldots \}$ of nonnegative half integers.  The highest weight theory gives a labeling of isomorphism classes of irreducible objects in $\Rep(\SU_q(2))$ by $\frac{1}{2}\N$.  That is, for each $n \in \half \N$, we have an irreducible representation $u_n$, unique up to isomorphism, having the classical dimension $2 n + 1$, and the quantum dimension $\aqn{2n+1}$.  The tensor product of two such representations decomposes the same way as in the classical case:
\[
u_m \Circt u_n = u_{\absv{m - n}} \oplus u_{\absv{m - n} + 1} \oplus \cdots \oplus u_{m + n}.
\]

The first nontrivial irreducible representation $u = u_{1/2}$ is of special importance.  It can be realized on a $2$-dimensional Hilbert space $\Hsp_{1/2}$ with an orthonormal basis $(e_1, e_2)$, endowed with the comodule structure $\delta(e_i) = e_1 \otimes u_{1 i} + e_2 \otimes u_{2 i}$.  Thus, the generators $(u_{i j})_{i j}$ of $\PW(\SU_q(2))$ are precisely the matrix coefficients for this choice of basis.

The matrix $F$ is related to the duality for $u$.  Namely, the linear map
\begin{equation}\label{EqRmatForHalfSpin}
R_u \colon \C \rightarrow \Hsp_{1/2} \otimes \Hsp_{1/2}, \quad \lambda \mapsto \lambda \left ( - \sgn(q) \absv{q}^{-1/2} e_1\otimes e_2 + \absv{q}^{1/2}e_2\otimes e_1 \right ).
\end{equation}
is an intertwiner from $u_o$ to $u \Circt u$, and satisfies the equation
\begin{align}\label{EqQFundEqForHalfSpin}
(R_u^*\otimes 1)(1\otimes R_u) &= -\sgn(q) \id_{u},&
R_u^* R_u &= \aqn{2} \id_o.
\end{align}
In particular, it follows that $u$ itself is a dual object for $u$, and $(R_u, -\sgn(q) R_u)$ gives a pair of duality morphisms. We then have that $Fe_i = (e_i^*\otimes \id)R_u$ for $i\in \{1,2\}$.

\begin{Def} Let $\cat{C}$ be a strict tensor $\cC^*$-category.  We call a couple $(x,\Rmat)$ a \emph{$q$-fundamental solution} in $\cat{C}$ if $x$ is an object in $\cat{C}$, and $\Rmat\in \Mor(\mathbbm{1}_{\cat{C}},x\otimes x)$ satisfies the condition~\eqref{EqQFundEqForHalfSpin} for $(x, \Rmat)$ in place of $(u, R_u)$.

We call two $q$-fundamental solutions $(x,\Rmat)$ and $(y,\mathcal{S})$ \emph{equivalent} if there exists a unitary morphism $U$ from $x$ to $y$ such that $\mathcal{S} = (U\otimes U)\Rmat$.
\end{Def}

By the above definition, if $(x, \Rmat)$ is a $q$-fundamental solution, one has $\bar{x} = x$ by the duality morphism pair $(\Rmat, -\sgn(q) \Rmat)$.

The following theorem is then well-known in one or another form. It states that the representation category of $\SU_q(2)$, which can also be realized as the Karoubi envelope of the Temperley--Lieb category, is the universal tensor $\cC^*$-category generated by a $q$-fundamental solution.

\begin{Theorem}[Chapter XII of \cite{Tur1}, Remark 2.2.4 in \cite{Eti1}, Lemma~6.1 of~\cite{Yam1}, Theorem 6.2 of~\cite{Pin2}, Sections 6--8 of~\cite{Pin3}] \label{ThmGen} Let $\cat{C}$ be a strict tensor $\cC^*$-category, and let $(x,\Rmat)$ be a $q$-fundamental solution in $\cat{C}$. Then there exists a unique strict tensor $\cC^*$-functor \[\mathcal{F}\colon \Rep(\SU_q(2))\rightarrow \cat{C}\] such that $\mathcal{F} u = x$ and such that $\Rmat$ equals $\Rmat_{\mathcal{F}} = \mathcal{F}(R_u)$.

Moreover, two strict tensor $\cC^*$-functors $\mathcal{F}$ and $\mathcal{G}$ from $\Rep(\SU_q(2))$ into $\cat{C}$ are tensor equivalent if and only if the $q$-fundamental solutions $(\mathcal{F} u ,\Rmat_{\mathcal{F}})$ and $(\mathcal{G} u,\Rmat_{\mathcal{G}})$ are equivalent.
\end{Theorem}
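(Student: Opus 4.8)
The plan is to build the functor $\mathcal F$ by invoking the universal property of the Temperley--Lieb category and then upgrading along the Karoubi envelope. First I would recall the free (strict, $\cC^*$-) tensor category $\mathcal{TL}_\delta$ on a single self-dual generator with a fixed cup/cap morphism normalized so that the circle evaluates to $\delta$; here I take $\delta = -\sgn(q)\aqn 2$ (equivalently $\delta = q + q^{-1}$), matching the second relation in \eqref{EqQFundEqForHalfSpin} together with the first. The content of the universal property of $\mathcal{TL}_\delta$ — which one can phrase diagrammatically, a morphism being a linear combination of planar tangles modulo isotopy and the circle relation — is exactly that, for any strict tensor $\cC^*$-category $\cat C$ and any $q$-fundamental solution $(x,\Rmat)$ in $\cat C$, there is a unique strict tensor functor $\mathcal{TL}_\delta \to \cat C$ sending the generating object to $x$ and the generating cup to $\Rmat$. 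Uniqueness is forced because $x$ and $\Rmat$ (and its adjoint $\Rmat^*$, the cap) generate; existence requires checking that the two relations \eqref{EqQFundEqForHalfSpin} are exactly the relations that a planar-tangle presentation of $\mathcal{TL}_\delta$ imposes, which is the standard ``zig-zag plus circle'' computation. One subtlety worth isolating: the $\cC^*$-structure must be respected, i.e. the cap must map to $\Rmat^*$; this is automatic once one knows that in $\Rep(\SU_q(2))$ the conjugate-equation pair is $(R_u, -\sgn(q)R_u)$ with $R_u^* $ the evaluation, so the $*$-operation on tangles (reflection) is transported correctly.

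Next I would pass to Karoubi envelopes. By the cited literature (Woronowicz, Turaev Ch.~XII, Banica) the category $\Rep(\SU_q(2))$ is equivalent, as a tensor $\cC^*$-category, to the idempotent completion $\widehat{\mathcal{TL}_\delta}$: the Jones--Wenzl projections $p_n \in \End(x^{\otimes n})$ carve out the irreducibles $u_{n/2}$, and the fusion rules match the classical Clebsch--Gordan rule quoted in the preliminaries. Since $\cat C$ is a $\cC^*$-category it is already idempotent complete (ranges of self-adjoint idempotents split), so the functor $\mathcal{TL}_\delta \to \cat C$ extends uniquely (up to unique natural isomorphism) to $\widehat{\mathcal{TL}_\delta}\to\cat C$, hence gives $\mathcal F\colon \Rep(\SU_q(2))\to\cat C$ with $\mathcal F u = x$ and $\mathcal F(R_u) = \Rmat$. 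Strictness of $\mathcal F$ is not literally available after idempotent completion — the completion is only a tensor category, not a strict one — but one replaces it by an equivalent strict model, or simply notes that the statement of the theorem should be read with the standard strictification conventions already in force in \cite{DCY2}; I would remark on this rather than belabor it.

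For the uniqueness clause: given any strict tensor $\cC^*$-functor $\mathcal G$ with $\mathcal G u = x$, $\Rmat_{\mathcal G} = \Rmat$, it restricts to a functor out of $\mathcal{TL}_\delta$ agreeing with $\mathcal F$ on the generators, hence (by the universal property) agrees with $\mathcal F$ on $\mathcal{TL}_\delta$, hence on its idempotent completion $\Rep(\SU_q(2))$ — giving $\mathcal G \cong \mathcal F$ (and $=\mathcal F$ if one has been careful with strictification). For the ``moreover'' part: if $(\mathcal F u,\Rmat_{\mathcal F})$ and $(\mathcal G u,\Rmat_{\mathcal G})$ are equivalent via a unitary $U\colon \mathcal F u \to \mathcal G u$ with $\Rmat_{\mathcal G}=(U\otimes U)\Rmat_{\mathcal F}$, then $(\mathcal G u,\Rmat_{\mathcal G})$ is itself a $q$-fundamental solution and $\mathcal G$ is \emph{a} functor realizing it; but so is $\mathrm{Ad}_U\circ\mathcal F$ — precisely, the functor obtained by transporting $\mathcal F$ along $U$ — which realizes the same solution, so by the uniqueness clause $\mathcal G$ is tensor equivalent to $\mathrm{Ad}_U\circ\mathcal F$, which is tensor equivalent to $\mathcal F$. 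Conversely a tensor equivalence $\mathcal F \simeq \mathcal G$ manifestly carries $(\mathcal F u, \Rmat_{\mathcal F})$ to an equivalent $q$-fundamental solution, which by the compatibility of the natural transformation with the tensor structure is $(\mathcal G u,\Rmat_{\mathcal G})$.

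The main obstacle I anticipate is not conceptual but bookkeeping: getting every normalization constant consistent — the value of the circle $\delta$, the sign $-\sgn(q)$ in $(R_u^*\otimes 1)(1\otimes R_u) = -\sgn(q)\,\id_u$, the appearance of $|q|^{\pm 1/2}$ in $R_u$, and the choice of duality pair $(R_u,-\sgn(q)R_u)$ — so that the two equations in \eqref{EqQFundEqForHalfSpin} really do cut out $\mathcal{TL}_\delta$ and nothing coarser or finer. I would handle this by fixing once and for all that the ``standard solution'' of the conjugate equations for $x$ has $\Rmat^*\Rmat = \aqn 2 = |q+q^{-1}|$ and that the resulting circle value in the diagram calculus is $-\sgn(q)\aqn 2 = q+q^{-1}$, and then checking the zig-zag identity is the single relation $(\Rmat^*\otimes 1)(1\otimes\Rmat) = -\sgn(q)\,\id_x$, so that $\mathcal{TL}_{q+q^{-1}}$ is presented by exactly these data. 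Everything else is an appeal to the cited sources.
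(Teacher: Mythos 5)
Your proposal is correct and follows essentially the same route as the paper: the existence and uniqueness of $\mathcal{F}$ is delegated to the universal property of the Temperley--Lieb category (the paper simply cites \cite{Pin3} and \cite{Tur1} for this, with the same Karoubi-envelope remark), and the ``moreover'' part is obtained from the unitary $U$ via the maps $U^{\otimes n}$ on tensor powers of $u$, whose naturality rests on the fact that morphisms between such powers are generated by $R_u$ and $R_u^*$ --- exactly the paper's argument, merely repackaged through the uniqueness clause. One minor bookkeeping slip: with the normalizations of \eqref{EqQFundEqForHalfSpin} the loop value is $-\sgn(q)\aqn{2} = -(q+q^{-1})$ rather than $q+q^{-1}$, but this is precisely the sign-convention issue you already flagged as requiring care and can be absorbed by rescaling the generator.
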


\begin{proof} The first part of the theorem as stated is found most explicitly, in the $^*$-setting, in \cite{Pin3}. However, one can easily modify the version in \cite{Tur1} to accommodate for the $^*$-structure.  The second part is not found explicitly in these references, but can be deduced easily from their techniques: given an equivalence $U$ between $(\mathcal{F}u,\Rmat_{\mathcal{F}})$ and $(\mathcal{G}u,\mathcal{R}_{\mathcal{G}})$, we get a map \[\tau_{u^{\otimes n}}\colon \mathcal{F}(u^{\otimes n}) = \mathcal{F}(u)^{\otimes n} \overset{U^{\otimes n}}{\rightarrow} \mathcal{G}(u)^{\otimes n}= \mathcal{G}(u^{\otimes n}).\] Using that morphisms between $u^{\otimes n}$ and $u^{\otimes m}$ can be expressed as algebraic combinations of $R_u$ and $R_u^*$, it follows that the above map is natural. It can then be extended to the desired tensor equivalence.
\end{proof}

\begin{Rem} The strictness assumption is not essential, since, in this particular case, any strong tensor $\cC^*$-functor from $\Rep(\SU_q(2))$ to $\cat{C}$ is equivalent with a strict one.
\end{Rem}

Let $J$ be an index set. We recall that $\cat{E}^J_f$ is the rigid tensor $\cC^*$-category of $J\times J$-graded Hilbert spaces $\mathscr{H} = \oplus_{v,w\in J} \mathscr{H}_{vw}$ such that $\sup_v \sum_w (\dim(\Hsp_{vw})+\dim(\Hsp_{wv}))<\infty$. Applying Theorem~\ref{ThmGen} to this category (which for all intents and purposes may be assumed strict), we obtain the following proposition.

\begin{Prop}\label{PropEq}
A strict tensor $\cC^*$-functor $\mathcal{F}\colon \Rep(\SU_q(2)) \rightarrow \Ef^{J}$ is completely determined by the values of $\mathcal{F}u$ and $\Rmat_{\mathcal{F}} = \mathcal{F}(R_u)$.  These data can be any pair $(\Hsp, \Rmat)$ such that $\Hsp$ is an object in $\Ef^{J}$ and $\Rmat = \oplus_v \sum_w \Rmat_{vw}$ is a family of maps
\[
\Rmat_{v w}\colon \mathbb{C}\rightarrow \Hsp_{v w}\otimes \Hsp_{w v}
\]
satisfying
\begin{enumerate}
\item
$(\Rmat_{v w}^*\otimes \id) (\id \otimes \Rmat_{w v}) = -\sgn(q) \id$ on $\Hsp_{v w}$, and
\item
$\sum_w \Rmat_{v w}^*\Rmat_{v w} = \aqn{2}$, for all $v$.
\end{enumerate}

Two $q$-fundamental solutions $(\Hsp,\Rmat)$ and $(\mathscr{G},\mathcal{S})$ respectively in $\Ef^J$ and $\Ef^{J'}$ give isomorphic $\Rep(\SU_q(2))$-module categories if and only if there exists a bijection $\phi\colon J'\rightarrow J$ and unitaries $U_{v w}\colon \mathscr{G}_{\phi(v)\phi(w)}\rightarrow \Hsp_{v w}$ such that $\Rmat_{v w} = (U_{v w}\otimes U_{w v})\mathcal{S}_{\phi(v)\phi(w)}$.
\end{Prop}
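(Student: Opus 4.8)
\section*{Proof proposal}

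The plan is to derive everything from the universal property of Theorem~\ref{ThmGen}, applied to $\cat{C} = \Ef^{J}$ (which, as already noted, may be treated as strict, or one uses the Remark following Theorem~\ref{ThmGen} that strictness is inessential here), after unwinding what a $q$-fundamental solution looks like in a category of graded Hilbert spaces. First I would recall the tensor structure of $\Ef^{J}$: the unit $\mathbbm{1}$ is the $J\times J$-graded Hilbert space with $\mathbbm{1}_{vw}=\delta_{vw}\C$, the tensor product satisfies $(\Hsp\otimes\cat{K})_{vw}=\bigoplus_{u}\Hsp_{vu}\otimes\cat{K}_{uw}$, and morphisms are the grading-preserving bounded linear maps. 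Hence, for an object $\Hsp$, an element of $\Mor(\mathbbm{1},\Hsp\otimes\Hsp)$ is exactly a family $\Rmat=\bigoplus_{v}\Rmat_{v}$ with $\Rmat_{v}\in\Mor\bigl(\C,(\Hsp\otimes\Hsp)_{vv}\bigr)=\Mor\bigl(\C,\bigoplus_{w}\Hsp_{vw}\otimes\Hsp_{wv}\bigr)$, i.e.\ a family of maps $\Rmat_{vw}\colon\C\rightarrow\Hsp_{vw}\otimes\Hsp_{wv}$ (the summability condition on the $\Rmat_{vw}$ being automatic once condition~(2) holds).

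Next I would compute the two sides of~\eqref{EqQFundEqForHalfSpin} componentwise for such an $\Rmat$. Tracking the $J\times J$-grading through $\id_{\Hsp}\otimes\Rmat$ and $\Rmat^{*}\otimes\id_{\Hsp}$, and using that $\Rmat^{*}$ is supported on the diagonal gradings, one finds that at grading $(v,w)$ only the summand $\Hsp_{vw}\otimes\Hsp_{wv}\otimes\Hsp_{vw}$ survives, so that the first identity of~\eqref{EqQFundEqForHalfSpin} becomes precisely $(\Rmat_{vw}^{*}\otimes\id)(\id\otimes\Rmat_{wv})=-\sgn(q)\id$ on $\Hsp_{vw}$, which is condition~(1). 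Likewise $\Rmat^{*}\Rmat$ acts on $\mathbbm{1}_{vv}=\C$ as $\sum_{w}\Rmat_{vw}^{*}\Rmat_{vw}$, so the second identity of~\eqref{EqQFundEqForHalfSpin} becomes condition~(2). Thus the $q$-fundamental solutions in $\Ef^{J}$ are exactly the pairs $(\Hsp,\Rmat)$ satisfying (1)--(2), and the first assertion of the proposition follows directly from the existence-and-uniqueness part of Theorem~\ref{ThmGen}.

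For the classification statement I would invoke the correspondence from~\cite{DCY2} between $\Rep(\SU_q(2))$-module $\cC^{*}$-categories and tensor $\cC^{*}$-functors $\Rep(\SU_q(2))\rightarrow\Ef^{J}$, under which two functors $\mathcal{F}\colon\Rep(\SU_q(2))\rightarrow\Ef^{J}$ and $\mathcal{G}\colon\Rep(\SU_q(2))\rightarrow\Ef^{J'}$ yield isomorphic module categories precisely when there is a bijection $\phi\colon J'\rightarrow J$ such that $\mathcal{F}$ and the relabelled functor $\mathcal{G}^{\phi}\colon\Rep(\SU_q(2))\rightarrow\Ef^{J}$ are tensor equivalent. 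By the equivalence part of Theorem~\ref{ThmGen}, this holds iff the $q$-fundamental solutions $(\mathcal{F}u,\Rmat_{\mathcal{F}})=(\Hsp,\Rmat)$ and $(\mathcal{G}^{\phi}u,\Rmat_{\mathcal{G}^{\phi}})$ are equivalent, i.e.\ iff there is a unitary $U$ between the underlying objects of $\Ef^{J}$ with $\Rmat=(U\otimes U)\Rmat_{\mathcal{G}^{\phi}}$. Writing $U=\bigoplus_{v,w}U_{vw}$ with $U_{vw}\colon\mathscr{G}_{\phi(v)\phi(w)}\rightarrow\Hsp_{vw}$ and noting $\Rmat_{\mathcal{G}^{\phi},vw}=\mathcal{S}_{\phi(v)\phi(w)}$, the identity $\Rmat=(U\otimes U)\Rmat_{\mathcal{G}^{\phi}}$ reads componentwise $\Rmat_{vw}=(U_{vw}\otimes U_{wv})\mathcal{S}_{\phi(v)\phi(w)}$, which is the stated criterion.

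The main obstacle I anticipate is purely bookkeeping: carefully tracking the $J\times J$-grading through the (strict) tensor products in the snake identity, so that the many index summations collapse to the single surviving diagonal term; this is the one genuine computation. A secondary point requiring care is citing~\cite{DCY2} precisely enough to extract both that module categories are encoded by such functors into $\Ef^{J}$ and that the residual freedom in this encoding is exactly a relabelling bijection $\phi$ of the index set together with a unitary natural isomorphism. Once that is in hand, the second half is a formal consequence of Theorem~\ref{ThmGen}.
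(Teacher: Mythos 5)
Your proposal is correct and follows exactly the route the paper intends: the paper gives no written proof beyond the sentence ``Applying Theorem~\ref{ThmGen} to this category \ldots we obtain the following proposition,'' and your argument simply fills in the bookkeeping (unwinding $\Mor(\mathbbm{1},\Hsp\otimes\Hsp)$ in $\Ef^J$ into the family $\Rmat_{vw}$, matching the two identities of~\eqref{EqQFundEqForHalfSpin} with conditions (1)--(2), and combining the relabelling bijection with the equivalence part of Theorem~\ref{ThmGen}). The grading computation collapsing the snake identity to the single diagonal term is carried out correctly.
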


\begin{Rem}  The module category associated with such an $\mathcal{F}$ is connected if and only if the grading on $\Hsp$ can not be decomposed into two separate blocks.
\end{Rem}

The above conditions on the $\Rmat_{v w}$ imply that $\Hsp_{v w}$ and $\Hsp_{w v}$ have the same dimension for any pair $(v, w)$.  We can also represent these $\Rmat_{v w}$ as anti-linear maps $\Jop_{v w}\colon \Hsp_{v w}\rightarrow \Hsp_{w v}$ defined by
\begin{equation}\label{EqRandJ}
\Jop_{v w}\xi  =(\xi^*\otimes \id)(\Rmat_{v w}(1)),
\end{equation}
where $\xi^*(\eta) = \langle \xi,\eta\rangle$.

\begin{Prop}[Cf. \cite{DCY2}, Lemma A.3.2] \label{PropJCondi}
The operators $(\Jop_{v w})_{v, w}$ satisfy $\Jop_{w v}\Jop_{v w} = -\sgn(q) \id$ and, for any fixed $v$, $\sum_{w} \Tr(\Jop_{v w}^* \Jop_{v w}) = \aqn{2}$.
\end{Prop}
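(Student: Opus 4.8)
The plan is to obtain both identities by unwinding the dictionary \eqref{EqRandJ} between the morphisms $\Rmat_{vw}$ and the antilinear maps $\Jop_{vw}$, exactly as in \cite{DCY2}, Lemma~A.3.2; no ingredient beyond the two conditions of Proposition~\ref{PropEq} is needed. First I would fix a pair $(v,w)$, choose an orthonormal basis $(\zeta_a)_a$ of $\Hsp_{vw}$, and write $\Rmat_{vw}(1)=\sum_a \zeta_a\otimes\omega_a$ with $\omega_a\in\Hsp_{wv}$. Plugging this into \eqref{EqRandJ} gives $\Jop_{vw}\xi=\sum_a\langle\xi,\zeta_a\rangle\omega_a$, so in particular $\Jop_{vw}\zeta_a=\omega_a$, and $\Jop_{vw}$ is visibly antilinear. (For fixed $v$ only finitely many $w$ contribute, by the defining condition on objects of $\Ef^J$, so all sums over $w$ below are finite.)

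For the first identity I would expand $\Rmat_{wv}(1)=\sum_b\zeta'_b\otimes\omega'_b$ in the same fashion and compute, for $\xi\in\Hsp_{vw}$, both $\Jop_{wv}\Jop_{vw}\xi$ and $(\Rmat_{vw}^*\otimes\id)(\id\otimes\Rmat_{wv})\xi$ directly. A short bookkeeping shows that each of them equals $\sum_{a,b}\langle\zeta_a,\xi\rangle\langle\omega_a,\zeta'_b\rangle\,\omega'_b$, so condition~(1) of Proposition~\ref{PropEq} immediately yields $\Jop_{wv}\Jop_{vw}=-\sgn(q)\id$ on $\Hsp_{vw}$.

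For the trace identity I would use that, for any antilinear map $T$ between finite-dimensional Hilbert spaces, $T^*T$ is linear and positive with $\Tr(T^*T)=\sum_a\|T\zeta_a\|^2$ for any orthonormal basis $(\zeta_a)$. Applying this to $T=\Jop_{vw}$ and using $\Jop_{vw}\zeta_a=\omega_a$ gives $\Tr(\Jop_{vw}^*\Jop_{vw})=\sum_a\|\omega_a\|^2=\|\Rmat_{vw}(1)\|^2=\Rmat_{vw}^*\Rmat_{vw}$, where in the last step we identify the scalar operator on $\C$ with the scalar. Summing over $w$ and invoking condition~(2) of Proposition~\ref{PropEq} gives $\sum_w\Tr(\Jop_{vw}^*\Jop_{vw})=\aqn{2}$.

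The only genuinely error-prone part is the bookkeeping with conventions --- the sesquilinear identification $\xi\mapsto\xi^*$ used in \eqref{EqRandJ}, the notion of adjoint for an antilinear map, and the side on which the inner product is linear --- but once these are pinned down consistently with the excerpt, both computations are routine and there is no real obstacle.
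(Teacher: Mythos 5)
Your proposal is correct and follows essentially the same route as the paper: both identities are obtained by unwinding the dictionary \eqref{EqRandJ} and feeding the result into the two conditions of Proposition~\ref{PropEq}, with the trace identity reduced to $\Tr(\Jop_{vw}^*\Jop_{vw})=\Rmat_{vw}^*\Rmat_{vw}$ via an orthonormal basis exactly as in the paper. The only cosmetic difference is that you expand $\Rmat_{vw}(1)$ in coordinates for the first identity, where the paper notes directly that $\Jop_{wv}\Jop_{vw}\xi=(\Rmat_{vw}^*\otimes\id)(\xi\otimes\Rmat_{wv}(1))$; the computations are the same.
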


\begin{proof}
Let us first check $\Jop_{w v}\Jop_{v w} = -\sgn(q) \id$.  When $\xi\in \Hsp_{v w}$, unwinding the definition, we have
\[
\Jop_{w v}\Jop_{v w} \xi = \Jop_{w v}((\xi^*\otimes \id)\Rmat_{v w}(1)) = (\Rmat_{v w}^*\otimes \id)(\xi\otimes \Rmat_{w v}(1)).
\]
Using the first equality in~\eqref{EqQFundEqForHalfSpin}, the right hand side is equal to $ -\sgn(q)\xi$.

We next verify the condition on the traces.  Choose orthonormal bases $(\xi_i^{v w})_i$ of the Hilbert spaces $\Hsp_{v w}$. Then
\begin{align*}
\sum_w \Rmat_{v w}^*\Rmat_{v w} &= \sum_{w}\sum_{i} ((\xi_i^{v w *}\otimes \id)\Rmat_{v w})^* ((\xi^{v w *}_i \otimes \id)\Rmat_{v w}) \\
&= \sum_w \sum_{i} \langle \Jop_{v w} \xi_i^{v w},\Jop_{v w}\xi^{v w}_i\rangle \\
&= \sum_w \Tr(\Jop_{v w}^*\Jop_{v w}).
\end{align*}
By the second relation in Proposition \ref{PropEq}, we have $\sum_{w} \Tr(\Jop_{v w}^* \Jop_{v w}) = \aqn{2}$.
\end{proof}

Conversely, if a collection of anti-linear operators $\Jop_{v w}\colon \Hsp_{v w}\rightarrow \Hsp_{w v}$ satisfies the conditions of the proposition above, the family $(\Rmat_{v w})_{v w}$ defined by~\eqref{EqRandJ} gives a $q$-fundamental solution.  Two such collections $(\Hsp_{v w},\Jop_{v w})$ and $(\mathscr{G}_{v w},\mathcal{I}_{v w})$ are then equivalent if and only if there exists a bijection $\phi\colon J'\rightarrow J$ and unitaries $U_{v w}\colon \mathscr{G}_{\phi(v)\phi(w)}\rightarrow \Hsp_{v w}$ such that $\Jop_{v w} = U_{v w}\mathcal{I}_{\phi(v),\phi(w)}U_{v w}^*$.

\section{Classification of $\SU_q(2)$-homogeneous spaces by graphs}

We use the results from the previous section to classify the quantum homogeneous spaces of $SU_q(2)$ in terms of weighted graphs.

\begin{Not}
Let $J$ be a countable set, and $q\in \mathbb{R}$ with $0<|q|\leq 1$.  We let $\mathcal{T}^J_q$ denote the collection of $q$-fundamental solutions $(\Hsp,\Rmat)$ in $\Ef^J$.
\end{Not}

\begin{Not}
Let $(\Hsp, \Rmat) \in \mathcal{T}^J_q$, and let  $\Jop_{v w}$ be the associated anti-linear operators as in \eqref{EqRandJ}.  We let $(\lambda^{(vw)}_k)_k$ denote the eigenvalues of $\Jop_{v w}^* \Jop_{v w}$, counted with multiplicity.

We define $W(\Hsp,\Rmat)$ as the oriented graph with cost which has the vertex set $J$, and $\dim(\Hsp_{v w})$ arrows from $v$ to $w$ with costs $(\lambda^{(v w)}_k)_k$.
\end{Not}

In the following proposition, we provide a fundamental domain for the equivalence classes of $q$-fundamental solutions in $\Ef^J$ (cf. the discussion after Theorem 5.5 of~\cite{BDV1}).

\begin{Prop}\label{LemGrphFromQFund}
Let $J$ and $J'$ be countable sets.  When $(\Hsp, \Rmat) \in \mathcal{T}^J_q$ and $(\mathscr{G}, \mathcal{S}) \in \mathcal{T}^{J'}_q$, the associated weighted graphs $W(\Hsp, \Rmat)$ and $W(\mathscr{G}, \mathcal{S})$ are isomorphic if and only if the induced tensor functors from $\Rep(\SU_q(2))$ into $\Ef^J$ and $\Ef^{J'}$ are tensor equivalent.

Moreover, any $W(\Hsp, \Rmat)$ is a fair and balanced $\qn{2}$-graph, and all fair and balanced $\qn{2}$-graphs arise in this way.
\end{Prop}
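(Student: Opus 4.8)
The plan is to express everything through the antilinear operators $\Jop_{vw}$ of Proposition~\ref{PropJCondi} and then invoke normal forms for antilinear maps. By Theorem~\ref{ThmGen}, Proposition~\ref{PropEq} and the discussion following Proposition~\ref{PropJCondi}, the tensor functors induced by $(\Hsp,\Rmat)$ and $(\mathscr{G},\mathcal{S})$ are tensor equivalent precisely when there is a bijection $\phi\colon J\to J'$ and unitaries $U_{vw}\colon\mathscr{G}_{\phi(v)\phi(w)}\to\Hsp_{vw}$ with $\Jop_{vw}=U_{wv}\,\mathcal{I}_{\phi(v)\phi(w)}\,U_{vw}^*$, where $(\mathcal{I}_{vw})$ is the family attached to $(\mathscr{G},\mathcal{S})$. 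Hence the first assertion reduces to showing that such data exist if and only if $W(\Hsp,\Rmat)$ and $W(\mathscr{G},\mathcal{S})$ are isomorphic as weighted graphs. One direction is immediate, since conjugating each $\Jop_{vw}$ by unitaries changes neither $\dim\Hsp_{vw}$ nor the eigenvalue multiset of $\Jop_{vw}^*\Jop_{vw}$, hence not the weighted graph.

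For the converse I would argue at each ordered pair $(v,w)$ separately. The relation $\Jop_{wv}\Jop_{vw}=-\sgn(q)\id$ makes every $\Jop_{vw}$ invertible, with $\Jop_{wv}=-\sgn(q)\Jop_{vw}^{-1}$ when $v\neq w$, and $\Jop_{vv}^2=-\sgn(q)\id$ for loop vertices. For $v\neq w$, an invertible antilinear map between finite-dimensional Hilbert spaces has a singular value decomposition, and is therefore determined, up to pre- and post-composition with arbitrary unitaries, by the spectrum of $\Jop_{vw}^*\Jop_{vw}$; a graph isomorphism provides exactly the matching of $\dim\Hsp_{vw}$ and of this spectrum with those of $\mathcal{I}_{\phi(v)\phi(w)}$, so the decomposition yields $U_{vw}$ and $U_{wv}$ at once, and the identity for $\Jop_{wv}$ follows automatically. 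For $v=w$, I would take the polar decomposition $\Jop_{vv}=W_vP_v$ with $P_v=(\Jop_{vv}^*\Jop_{vv})^{1/2}$ and $W_v$ antiunitary; then $\Jop_{vv}^2=-\sgn(q)\id$ forces $W_v^2=-\sgn(q)\id$ — a real structure if $q<0$, a quaternionic one if $q>0$ — and forces $W_v$ to interchange the $\mu$- and $\mu^{-1}$-eigenspaces of $P_v^2$. Since any two real (resp.\ quaternionic) structures on a fixed finite-dimensional Hilbert space are unitarily conjugate, $\Jop_{vv}$ is determined up to unitary conjugation by the spectrum of $\Jop_{vv}^*\Jop_{vv}$, again matched by the graph isomorphism. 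Collecting the $U_{vw}$ over all pairs gives the equivalence.

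For the second assertion, recall $\aqn2=\absv{\qn2}=\absv{q+q^{-1}}$. That the source cost at each $v$ equals $\aqn2$ is precisely $\sum_w\Tr(\Jop_{vw}^*\Jop_{vw})=\aqn2$ from Proposition~\ref{PropJCondi}. Balancedness follows from $(\Jop_{vw}^*\Jop_{vw})^{-1}=\Jop_{wv}\Jop_{wv}^*$, which is obtained from $\Jop_{vw}^{-1}=-\sgn(q)\Jop_{wv}$ (with $\Jop_{vv}$ for $\Jop_{wv}$ when $v=w$) and whose right-hand side has the same spectrum as $\Jop_{wv}^*\Jop_{wv}$: thus the costs of the arrows $w\to v$ are the reciprocals of those of the arrows $v\to w$, while the loop costs at $v$ come in reciprocal pairs; pairing these (weight-$1$ loops may be fixed or paired among themselves) defines an involution with $\we(e)\we(\bar{e})=1$. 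When $q>0$ the relation $\Jop_{vv}^2=-\id$ makes $\Hsp_{vv}$ a quaternionic vector space, so the number $\dim\Hsp_{vv}$ of loops at $v$ is even — this is the remaining clause of fairness, and it also allows the loop involution to be taken fixed-point-free in that case. Conversely, given a fair and balanced $\qn2$-graph $(\Gamma,\we)$, I would put $J=\Gamma^{(0)}$, fix such a balanced involution $e\mapsto\bar{e}$ (fixed-point-free on loops when $q>0$), let $\dim\Hsp_{vw}$ be the number of arrows $v\to w$, and define $\Jop_{vw}$ diagonally along the involution: an arrow $e$ from $v$ to $w$ ($v\neq w$) and its partner $\bar{e}$ span lines interchanged by scalars of moduli $\we(e)^{1/2}$ and $\we(e)^{-1/2}$, while a pair $\{e,\bar{e}\}$ of loops at $v$ — or a fixed loop, which occurs only for $q<0$ — spans a two-dimensional block (resp.\ a line), the scalars and a sign chosen so that $\Jop_{wv}\Jop_{vw}=-\sgn(q)\id$ and the eigenvalues of $\Jop_{vw}^*\Jop_{vw}$ are the costs $\we(e)$ of the arrows $v\to w$. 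The degree bound $\deg(\Gamma)\le(\qn2)^2$ noted earlier ensures $\Hsp=\bigoplus_{v,w}\Hsp_{vw}\in\Ef^J$, while $\sum_w\Tr(\Jop_{vw}^*\Jop_{vw})$ is the source cost $\aqn2$; so by the converse to Proposition~\ref{PropJCondi} the family defines $(\Hsp,\Rmat)\in\mathcal{T}^J_q$, and by construction $W(\Hsp,\Rmat)\cong(\Gamma,\we)$.

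The step I expect to be the crux is the local analysis at the loop vertices: reading off from $\Jop_{vv}^2=-\sgn(q)\id$ that the polar part of $\Jop_{vv}$ is a genuine real or quaternionic structure, deducing the reciprocity and parity of $\mathrm{spec}(\Jop_{vv}^*\Jop_{vv})$, and then using uniqueness of such structures up to conjugation, all while keeping the independent choices of $U_{vw}$ and $U_{wv}$ mutually consistent across all pairs.
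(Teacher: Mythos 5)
Your proposal is correct and follows essentially the same route as the paper: reduce everything to the antilinear operators $\Jop_{vw}$, get fairness from the trace identity of Proposition~\ref{PropJCondi}, balancedness and the loop-parity from the polar decomposition relations $W_v^2=-\sgn(q)\id$ and $P_{wv}=W_{vw}P_{vw}^{-1}W_{vw}^*$, and realize an arbitrary fair and balanced $\qn{2}$-graph by the weighted shift $e\mapsto\rho(e)\we(e)^{1/2}\bar e$ along a chosen involution. The only difference is presentational: where the paper compares an arbitrary solution to the standard model built from its graph and leaves the final isomorphism as ``easy,'' you carry out the pairwise normal-form argument (SVD for $v\neq w$, simultaneous conjugation of the polar parts and uniqueness of real/quaternionic structures for the loop blocks) explicitly, which is a welcome amplification rather than a deviation.
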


\begin{proof} First we remark that, by Proposition \ref{PropEq}, equivalence of tensor functors can be replaced by equivalence of $q$-fundamental solutions.

Let now $(\Hsp,\Rmat)$ be a $q$-fundamental solution with associated anti-linear maps $\Jop_{v w}$.  First, the sum of weights on edges starting from a vertex $v$ is given by $\sum_{w} \Tr(\Jop_{v w}^* \Jop_{v w})$.  Thus, Proposition~\ref{PropJCondi} implies that $W(\Hsp, \Rmat)$ has the constant source weight $\aqn{2}$.

Let us show that the cost is balanced. If there are no edges from $v$ to $w$, this means $\Jop_{v w}=0$. Since $\Jop_{v w}\Jop_{w v} = -\sgn(q)\id$, it follows that also $\Jop_{w v}=0$, and so there are no edges from $w$ to $v$. Assume now that $\Jop_{v w}\neq0$. We consider the left polar decomposition $\Jop_{v w} = J_{v w} P_{v w}$, so that $J_{v w}$ is an isometric anti-linear map and $P_{v w}$ is a positive linear map. From the condition $\Jop_{v w}\Jop_{w v} = -\sgn(q)$, we know that $J_{v w}$ is an anti-unitary, that $P_{v w}$ is invertible and that $P_{v w}^{-1} (-\sgn(q) J_{v w}^*)$ is the right polar decomposition of $\Jop_{w v}$.  By the uniqueness of the (left) polar decomposition, we obtain $J_{w v} = -\sgn(q)J_{v w}^*$ and $P_{w v} = J_{v w} P_{v w}^{-1} J_{v w}^*$. It follows that, with preservation of multiplicities, we have
\begin{equation*}
\mathrm{Spec}(\Jop_{v w}^*\Jop_{v w}) = (\mathrm{Spec}(\Jop_{w v}^*\Jop_{w v}))^{-1}.
\end{equation*}
This shows that $W(\Hsp,\Rmat)$ has an involution such that the cost becomes balanced.  Thus, $W(\Hsp, \Rmat)$ is a fair and balanced $\aqn{2}$-graph.  Moreover, because $\Jop_{v v}^2= -\sgn(q)$, the multiplicity of $1$ in $\Jop_{v v}^*\Jop_{v v}$ is even in case $q>0$ and the involution can be made fixed point free. We conclude that $W(\Hsp, \Rmat)$ is a fair and balanced $\qn{2}$-graph.

Now, two $q$-fundamental solutions $(\Hsp,\Rmat)$ and $(\mathscr{G},\mathcal{S})$ with associated anti-linear maps $\Jop_{v w}$ and $\mathcal{I}_{v w}$ are equivalent if and only if there exists a bijection $\phi\colon J'\rightarrow J$ and unitaries $U_{v w}\colon \mathscr{G}_{\phi(v)\phi(w)}\rightarrow \Hsp_{v w}$ such that $\Jop_{v w} = U_{v w}\mathcal{I}_{\phi(v)\phi(w)}U_{v w}^*$. Hence we see that $W(\Hsp,\Rmat)$ does not depend on the equivalence class of $(\Hsp,\Rmat)$, since this simply corresponds to relabeling of the vertices.

Conversely, let $(\Gamma,\we)$ be a fair and balanced $\qn{2}$-graph, with a labeling of its vertices by $J$. Choose an involution on the edge set as in the balancedness condition, fixed point free in case $q>0$. Choose an arbitrary function $\rho\colon \Gamma^{(1)}\rightarrow \{-1,1\}$ such that $\rho(e)\rho(\bar{e}) = -\sgn(q)$ for all edges $e$. Such a function exists by the stated assumption in the $q>0$ case. Let $\Hsp_{v w}$ be the vector space spanned by the edges in $\Gamma^{(1)}$ having source $v$ and range $w$, and make it into a Hilbert space by making these edges an orthonormal basis. Finally, let $\mathscr{H}$ be the Hilbert space direct sum of the $\Hsp_{v w}$ with the obvious $J\times J$-grading. Note that by balancedness and fairness, the number of edges coming out of or going into any given vertex is uniformly bounded, so that $\bigoplus_{v,w}\Hsp_{v w}$ is an element of $\cat{E}_f^J$.

Define then $\Jop_{v w}\colon\Hsp_{v w}\rightarrow \Hsp_{w v}$ as the unique anti-linear operator taking the standard basis vector $e$ to $\rho(e) \we(e)^{1/2} \bar{e}$. It is clear, by construction, that these operators satisfy the equations in Proposition \ref{PropJCondi}. Hence they give a $q$-fundamental solution in $\cat{E}_f^J$.

It is left to show that these two maps are inverses of each other. The only difficulty may consist in showing that an arbitrary solution $(\Hsp,\Rmat)$ is isomorphic to the solution constructed from $W(\Hsp,\Rmat)$ as in the previous paragraph. However, choosing bases in the $\Hsp_{v w}$ which diagonalise the $\Jop_{v w}^*\Jop_{v w}$, we get immediately a choice for the function $\rho$. It is then easy to construct the desired isomorphism.
\end{proof}

\begin{Theorem}\label{TheoF3} For $0< \absv{q} \leq 1$, the quantum homogeneous spaces over $\SU_q(2)$ are classified, up to equivariant Morita equivalence, by connected fair and balanced $q+q^{-1}$-graphs.
\end{Theorem}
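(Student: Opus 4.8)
The plan is to obtain the theorem by composing three correspondences that are essentially already in hand. By the main classification result of~\cite{DCY2}, the quantum homogeneous spaces over $\SU_q(2)$, taken up to equivariant Morita equivalence, are in natural bijection with the indecomposable semi-simple module $\cC^*$-categories over $\Rep(\SU_q(2))$, taken up to equivalence of module $\cC^*$-categories. So it is enough to identify this last collection with the set of isomorphism classes of connected fair and balanced $q+q^{-1}$-graphs.

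For the first step I would fix, for a semi-simple module $\cC^*$-category $\cat{M}$, a countable index set $J$ for the isomorphism classes $(m_v)_{v\in J}$ of its simple objects; countability is automatic here since $\Rep(\SU_q(2))$ is finitely generated and $\cat{M}$ is indecomposable. The Tannaka--Kre\u{\i}n machinery of~\cite{DCY2} then associates to $\cat{M}$, together with this choice of labeling, a strict tensor $\cC^*$-functor $\Rep(\SU_q(2))\to\Ef^J$ --- realized, after the harmless strictification noted in the Remark following Theorem~\ref{ThmGen}, by $X\mapsto\bigoplus_{v,w}\Mor(m_w, X\otimes m_v)$ --- and this assignment descends to a bijection between equivalence classes of module $\cC^*$-categories and equivalence-plus-relabeling classes of such functors. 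Under this dictionary, equivalences of module $\cC^*$-categories become the combination of a relabeling bijection of $J$ with a unitary monoidal natural isomorphism of the associated functors, which is exactly the equivalence relation appearing in the second part of Proposition~\ref{PropEq}, and indecomposability of $\cat{M}$ becomes the statement that the $J\times J$-grading on $\Ef^J$ does not split into two blocks, i.e. the connectedness condition of the Remark following Proposition~\ref{PropEq}.

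The remaining two steps are immediate from the excerpt. Proposition~\ref{PropEq} identifies the strict tensor $\cC^*$-functors $\Rep(\SU_q(2))\to\Ef^J$, up to equivalence and relabeling, with the $q$-fundamental solutions $(\Hsp,\Rmat)\in\mathcal{T}^J_q$ up to the same relation, the connectedness conditions matching on both sides. Proposition~\ref{LemGrphFromQFund} then shows, letting $J$ range over all countable sets, that the weighted oriented graph $W(\Hsp,\Rmat)$ is a complete isomorphism invariant of such a $q$-fundamental solution and that the graphs so obtained are exactly the fair and balanced $\qn{2}$-graphs; restricting to connected objects on both sides and using the identity $\qn{2}=q+q^{-1}$ yields the desired bijection. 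Composing the three correspondences and tracking the equivalence relations and the connectedness conditions through each one gives Theorem~\ref{TheoF3}.

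The main obstacle is entirely in the first step: extracting from~\cite{DCY2} the precise dictionary between indecomposable semi-simple module $\cC^*$-categories (with equivariant Morita equivalence on the quantum-homogeneous-space side) and tensor $\cC^*$-functors into $\Ef^J$ (with equivalence and relabeling), and in particular checking that the functor built from $\cat{M}$ really lands in $\Ef^J$ rather than in the larger category of all $J\times J$-graded Hilbert spaces --- a boundedness which ultimately reflects the fact, visible from Proposition~\ref{LemGrphFromQFund} together with the Remarks following the definitions, that a fair and balanced $\twe$-graph has degree at most $\twe^2$. Once that dictionary is in place, Propositions~\ref{PropEq} and~\ref{LemGrphFromQFund} carry out the rest essentially formally.
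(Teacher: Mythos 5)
Your proposal is correct and follows essentially the same route as the paper: the paper's proof likewise just combines Proposition~\ref{LemGrphFromQFund} with the main classification theorem of~\cite{DCY2} (its Theorem~6.4), noting that connectedness of $W(\Hsp,\Rmat)$ matches connectedness of the module category because the powers of $u$ generate $\Rep(\SU_q(2))$, which also forces the index set to be countable. The intermediate dictionary you flag as the "main obstacle" is exactly what the citation to~\cite{DCY2} is carrying, so your more explicit unpacking of it is a faithful elaboration rather than a different argument.
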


\begin{proof}
The theorem follows by combining Proposition~\ref{LemGrphFromQFund} with~\cite[Theorem~6.4]
{DCY2}. Since the powers of $u$ generate $\Rep(\SU_q(2))$, the graph $W(\Hsp, \Rmat)$ is connected if and only if the associated module category is connected, in which case the index set must necessarily be countable.
\end{proof}

\section{Connected fair and balanced graphs}

In the following, each a priori unoriented graph will be interpreted as a symmetric graph in the obvious way.


\subsection{Examples of fair and balanced graphs from Frobenius--Perron theory}

\begin{Prop}[c.f. Theorem~3.5 of~\cite{Eti1}]\label{PropExCost}
Let $\Gamma$ be a connected symmetric graph. Then $\Gamma$ admits a fair and balanced $\twe$-cost for some $\twe<0$ if and only if it has finite degree, i.e.~the number of edges emanating from a vertex is uniformly bounded. It admits a fair and balanced $\twe$-cost for some $\twe>0$ if in addition the number of loops at each vertex is even.
\end{Prop}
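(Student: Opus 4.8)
The plan is to analyze the two directions separately, using Frobenius--Perron theory for the ``if'' direction and a simple counting argument for ``only if''. First consider the necessity of finite degree. Suppose $\Gamma$ carries a fair and balanced $\twe$-cost $\we$ for some $\twe\neq 0$. At any vertex $v$, the source cost is $\sum_{s(e)=v}\we(e)=\absv{\twe}$, and since each $\we(e)>0$, there can be at most finitely many edges out of $v$; more quantitatively, one can extract the bound $\deg(\Gamma)\leq \twe^2$ exactly as in Remark~(2) after the definitions, by pairing each edge $e$ with $\bar e$ and using $\we(e)\we(\bar e)=1$ together with the AM--GM type estimate on the two source costs at $s(e)$ and $t(e)$. (Also, for $\twe>0$ the definition requires an even number of loops at each vertex, so that constraint is necessary too.) This handles the ``only if'' half essentially for free.

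For the converse, assume $\Gamma$ is connected and symmetric with finite degree. The idea is to build a balanced $\twe$-cost from a positive eigenvector of the adjacency operator. Fix an involution $e\mapsto\bar e$ on the edges witnessing symmetry, and let $A$ be the adjacency operator on $\ell^2(\Gamma^{(0)})$, i.e.\ $(A\delta_v)_w$ counts the edges from $v$ to $w$; finite degree makes $A$ a bounded self-adjoint operator. The key input is the existence of a strictly positive function $h\colon \Gamma^{(0)}\to \mathbb{R}^+$ and a scalar $\twe>0$ with $Ah=\twe h$ (a Frobenius--Perron eigenvector); connectedness is what forces $h$ to be nowhere zero. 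Given such an $h$, define for each edge $e$ from $v$ to $w$ the cost
\[
\we(e)=\frac{h(w)}{h(v)}.
\]
Then the source cost at $v$ is $\sum_{s(e)=v}\we(e)=\frac{1}{h(v)}\sum_{w}(\#\{e: v\to w\})\,h(w)=\frac{(Ah)(v)}{h(v)}=\twe$, so the cost is fair with parameter $\twe$; and $\we(e)\we(\bar e)=\frac{h(w)}{h(v)}\cdot\frac{h(v)}{h(w)}=1$, so it is balanced. This produces a fair and balanced $\twe$-cost for the positive value $\twe=\twe>0$. To get a negative parameter as well, I would simply twist: replace $\we(e)$ by $\we(e)$ on edges and note that we may instead look for an eigenvector of a signed adjacency operator, or more simply observe that we want source cost $\absv{\twe}$ with $\twe<0$, which is the \emph{same} numerical condition as the $\twe>0$ case minus the even-loop requirement; so the cost $\we$ just constructed, read with parameter $-\twe$, is a fair and balanced $(-\twe)$-cost (the loop-parity condition is vacuous for negative parameter). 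Finally, for the $\twe>0$ statement one needs the extra hypothesis that each vertex has an even number of loops, which is exactly what the definition demands and what must be assumed; the same $\we$ then works, with the involution chosen to pair up the loops without fixed points.

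The main obstacle is establishing the existence of the positive eigenvector $h$ in the infinite-graph setting: classical Perron--Frobenius applies to finite matrices, and here $\Gamma^{(0)}$ may be countably infinite. I would handle this by an exhaustion argument — take an increasing sequence of finite connected subgraphs $\Gamma_n\uparrow\Gamma$, apply finite-dimensional Perron--Frobenius to each to get positive eigenvectors $h_n$ with eigenvalues $\twe_n=\norm{\Gamma_n}$, normalize $h_n$ at a fixed basepoint, and extract a pointwise convergent subsequence using a Harnack-type inequality (the eigenvalue equation bounds $h_n(w)/h_n(v)$ along edges in terms of $\twe_n$, which is bounded by $\sqrt{\deg(\Gamma)}$ or so). The limit $h$ is then a positive eigenvector for $\twe=\lim\twe_n$. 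This is standard but is the one genuinely non-formal step; I expect it to mirror the argument behind Theorem~3.5 of~\cite{Eti1} cited in the statement, so I would either invoke that result directly or reproduce this exhaustion argument in a few lines.
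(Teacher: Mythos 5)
Your proposal is correct and follows essentially the same route as the paper: necessity via the uniform bound $\deg(\Gamma)\le \twe^2$ forced by balancedness, and sufficiency by assigning to an edge from $v$ to $w$ the cost $c_w/c_v$ for a strictly positive Frobenius--Perron eigenvector $c$ of the adjacency matrix, where the paper simply cites Pruitt for the infinite-graph existence statement that you sketch via exhaustion. One small caution: the clause ``since each $\we(e)>0$, there can be at most finitely many edges out of $v$'' is not by itself a valid inference (infinitely many positive weights can sum to $\absv{\twe}$); it is your subsequent involution argument, giving $\we(e)\ge 1/\absv{\twe}$ because $\we(\bar e)\le\absv{\twe}$, that actually yields finiteness.
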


\begin{proof} As already remarked in the Introduction and the proof of Proposition \ref{LemGrphFromQFund}, the existence of a fair and balanced $\twe$-cost necessarily implies that the degree of the graph is finite.

Conversely, if $\Gamma$ has finite degree, the norm $\|\Gamma\|$ of the adjacency matrix $A(\Gamma)$ of $\Gamma$ is finite. From classical Frobenius--Perron theory (for finite graphs) and \cite{Pru1} (for infinite graphs), it follows that we can find a formal eigenvector $c$ of $A(\Gamma)$ at eigenvalue $\|\Gamma\|$, such that $c_v>0$ for all vertices $v$. If then $e$ is any edge from $v$ to $w$, associate with it the cost $c_w/c_v>0$. It is clear that this cost is balanced. Moreover, the eigenvector property implies that it gives a fair and balanced $\twe$-cost for $\twe=-\|\Gamma\|$. Of course, this is a fair and balanced $\|\Gamma\|$-graph if the loops at vertices are even in number.
\end{proof}

\begin{Rem}\label{RemPFArg}
\begin{enumerate}
\item When $\Gamma$ is infinite, the same result of~\cite{Pru1} gives that we have fair and balanced $\twe$-costs for any $\twe$ with $|\twe|\geq \|\Gamma\|$.
\item When $\Gamma$ is a tree, any fair and balanced cost on $\Gamma$ must arise from a Frobenius--Perron eigenvector. Indeed, choosing a root for the tree, one can assign to any vertex the product of the costs of the edges in the unique minimal path from the root to that vertex, which is easily seen to give an eigenvector for the adjacency matrix with strictly positive entries. In general, as in the Examples~\ref{ExaCyclicSubgrp} below, other finite graphs might admit fair and balanced costs which are not induced by Frobenius--Perron vectors. See also the discussion in Theorem~3.5 of~\cite{Eti1}, whose techniques are also applicable in the positive setting.
\end{enumerate}
\end{Rem}

\begin{Cor}
Any connected symmetric graph $\Gamma$ of bounded degree and norm $\|\Gamma\|\geq 2$ arises as the graph of an ergodic action of $\SU_q(2)$, for at least one value of $q$.
\end{Cor}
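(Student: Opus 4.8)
The plan is to read this off from Proposition~\ref{PropExCost} together with Theorem~\ref{TheoF3}; the only genuinely new ingredient is an elementary observation identifying which values of $\twe$ can be written as $q+q^{-1}$ with $0<|q|\le 1$.

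First I would record the identity $\qn{2}=(q^{-2}-q^{2})/(q^{-1}-q)=q+q^{-1}$, so that the classifying invariant in Theorem~\ref{TheoF3} is precisely a connected fair and balanced $\qn{2}$-cost (the $\qn{2}$-graphs of Proposition~\ref{LemGrphFromQFund}). For real $q\neq 0$ one has $|q+q^{-1}|\geq 2$, and $q\mapsto q+q^{-1}$ is continuous and monotone on $[-1,0)$, mapping this interval onto $(-\infty,-2]$. Hence, given a connected symmetric graph $\Gamma$ of bounded degree with $\|\Gamma\|\geq 2$, there is a unique $q_{0}\in[-1,0)$ with $q_{0}+q_{0}^{-1}=-\|\Gamma\|$, namely the appropriate root $q_{0}=\frac{1}{2}\bigl(-\|\Gamma\|+\sqrt{\|\Gamma\|^{2}-4}\,\bigr)$ of $q^{2}+\|\Gamma\|q+1=0$. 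This is exactly the point at which the hypothesis $\|\Gamma\|\geq 2$ is needed: for $\|\Gamma\|<2$ the number $-\|\Gamma\|$ is not of the form $q+q^{-1}$ for any real $q$.

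Next, since $\Gamma$ is connected, symmetric and of bounded --- hence finite --- degree, Proposition~\ref{PropExCost} applied with $\twe=-\|\Gamma\|<0$ furnishes a fair and balanced $(-\|\Gamma\|)$-cost $\we$ on $\Gamma$, so that $(\Gamma,\we)$ is a connected fair and balanced $(q_{0}+q_{0}^{-1})$-graph. By Theorem~\ref{TheoF3} --- equivalently, by Proposition~\ref{LemGrphFromQFund} together with \cite[Theorem~6.4]{DCY2} --- this graph is the invariant of a quantum homogeneous space, that is, an ergodic action, of $\SU_{q_{0}}(2)$; forgetting the cost, its underlying oriented graph is $\Gamma$, which proves the corollary with $q=q_{0}$.

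I do not expect a real obstacle, since the corollary is in essence a repackaging of the earlier results; the one point requiring attention is the sign bookkeeping. One must use the \emph{negative} value $\twe=-\|\Gamma\|$, hence a negative $q$, because for $\twe>0$ both Proposition~\ref{PropExCost} and the very definition of a fair and balanced $\twe$-cost demand an even number of loops at each vertex, which is not among the hypotheses of the corollary; indeed, restricting $q$ to be positive, the statement would fail for any graph having a vertex with an odd number of loops, since such a graph carries no fair and balanced $\twe$-cost for any $\twe>0$. Finally, invoking Remark~\ref{RemPFArg}(1), I would note that when $\Gamma$ is infinite one obtains such ergodic actions of $\SU_{q}(2)$ for every $q$ in the whole interval $[q_{0},0)$, not just for $q=q_{0}$.
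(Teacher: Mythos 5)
Your proof is correct and follows exactly the route the paper intends: the corollary is stated without proof as an immediate consequence of Proposition~\ref{PropExCost} (applied with $\twe=-\|\Gamma\|$, so that the even-loop condition is irrelevant) together with Theorem~\ref{TheoF3}, and your sign bookkeeping and the observation that $\|\Gamma\|\geq 2$ is precisely what makes $-\|\Gamma\|=q+q^{-1}$ solvable for real $q\in[-1,0)$ match the paper's remark following the corollary about excluding norms below $2$.
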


The countable set of graphs with norm $< 2$ has to be excluded of course, since it correspond to the root of unity case $\twe= q+q^{-1}$ with $q\in \{e^{\frac{i\pi}{n}}\mid n\geq 3\}$, for which the operator algebraic $SU_q(2)$ is not defined. Cf. \cite{Ocn1}.

\begin{Rem}
The case of finite trees shows that there exist `isolated' examples of quantum homogeneous spaces for $\SU_q(2)$ which only appear at one particular value of $|q|$, c.f.~ the \emph{superrigid} graphs in~\cite{Eti1}.  The matrices associated with these graphs tend to have non-integer norms, and hence the corresponding quantum homogeneous space algebras cannot be embedded equivariantly into full quantum multiplicity ones by~\cite[Proposition~7.5]
{DCY2}. Cf.~Corollary 4.2 of \cite{Pin4} for a related result.
\end{Rem}

\begin{Def}
Let $\Gamma$ be an oriented graph, and let $n$ be a positive integer.  We let $\Gamma^{(n)}$ denote the graph which has the same vertices as $\Gamma$, and the paths of length $n$ in $\Gamma$ as its edges.
\end{Def}

\begin{Prop}\label{PropNStepGraph}
Let $\twe$ be a nonzero real number, and let $\Gamma$ be a connected symmetric graph endowed with a fair and balanced $\twe$-cost $\we$.  Then, for any positive integer $n$, the graph $\Gamma^{(n)}$ admits a fair and balanced $((-1)^{n+1}\twe^n)$-cost.
\end{Prop}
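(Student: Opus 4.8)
The plan is to build the desired cost on $\Gamma^{(n)}$ directly from the given cost $\we$ on $\Gamma$, exploiting that the source cost of $\we$ is the constant $|\twe|$ at every vertex. Given a path $p = (e_1, e_2, \ldots, e_n)$ of length $n$ in $\Gamma$ (so $t(e_i) = s(e_{i+1})$), the natural candidate is the product cost
\[
\we^{(n)}(p) = \we(e_1)\we(e_2)\cdots\we(e_n).
\]
First I would check fairness. Fix a vertex $v$. The source cost of $\we^{(n)}$ at $v$ is $\sum_p \we(e_1)\cdots\we(e_n)$, the sum over all length-$n$ paths starting at $v$. Summing first over $e_n$, then $e_{n-1}$, and so on, each inner sum $\sum_{e_i : s(e_i) = w} \we(e_i)$ equals $|\twe|$ by the fairness of $\we$, independently of $w$; so the total telescopes to $|\twe|^n$. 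Hence $\we^{(n)}$ has constant source cost $|\twe|^n = \bigl|(-1)^{n+1}\twe^n\bigr|$, as required for a fair and balanced $\bigl((-1)^{n+1}\twe^n\bigr)$-cost. (Here I also need $\Gamma^{(n)}$ to have finite degree, which follows since $\deg(\Gamma^{(n)}) \le \deg(\Gamma)^n < \infty$ by Remark following the definition of fair and balanced graphs; connectedness of $\Gamma^{(n)}$ is not required for the statement, and in any case is not claimed.)

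Next I would produce the involution witnessing balancedness. Fix an involution $e \mapsto \bar e$ on $\Gamma^{(1)}$ with $\we(e)\we(\bar e) = 1$, as in the hypothesis. For a path $p = (e_1, \ldots, e_n)$ define its \emph{reverse} $\bar p = (\bar e_n, \bar e_{n-1}, \ldots, \bar e_1)$; this is again a length-$n$ path (the source/target conditions reverse correctly because $\bar{\cdot}$ swaps $s$ and $t$), and $p \mapsto \bar p$ is an involution on $(\Gamma^{(n)})^{(1)}$ that swaps source and target of each edge of $\Gamma^{(n)}$. Moreover
\[
\we^{(n)}(p)\,\we^{(n)}(\bar p) = \prod_{i=1}^n \we(e_i)\we(\bar e_i) = 1,
\]
so the cost $\we^{(n)}$ is balanced with respect to this involution. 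This already establishes the claim when $(-1)^{n+1}\twe^n < 0$, i.e.\ for all $n$ when $\twe > 0$, and for even $n$ when $\twe < 0$.

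The remaining point — and the one requiring the most care — is the parity condition on loops, needed exactly when $(-1)^{n+1}\twe^n > 0$, i.e.\ when $\twe > 0$ (so $\twe^n > 0$, need even loop count) or when $\twe < 0$ and $n$ is odd (then $(-1)^{n+1}\twe^n = -\twe^n > 0$, wait: $\twe<0$, $n$ odd gives $\twe^n<0$ and $(-1)^{n+1}=1$, so $(-1)^{n+1}\twe^n<0$ — no parity condition). So the only case needing a parity check is $\twe > 0$. Here I must show that for each vertex $v$ the number of loops at $v$ in $\Gamma^{(n)}$ — i.e.\ the number of closed length-$n$ paths based at $v$ — can be split into orbits of size $2$ under $p \mapsto \bar p$; equivalently, the number of \emph{palindromic} closed paths (those with $\bar p = p$) is even. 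When $\twe > 0$ the hypothesis gives that $\Gamma$ itself has an even number of loops at each vertex and the chosen involution on $\Gamma^{(1)}$ can be taken fixed-point-free (as used in the proof of Proposition~\ref{LemGrphFromQFund}); I would use this to arrange that $p \mapsto \bar p$ on closed length-$n$ paths is also fixed-point-free. Indeed, if $\bar p = p$ then $\bar e_n = e_1$, $\bar e_{n-1} = e_2$, etc.; for $n$ even this pairs all edges of $p$ off into distinct $\bar{\cdot}$-pairs with no constraint, so palindromes can certainly exist, but then the orbit map on the \emph{multiset} of such $p$ need not be free — so instead I would argue more robustly: the total number of closed length-$n$ paths at $v$ is the $(v,v)$ entry of $A(\Gamma)^n$, and one shows $\mathrm{Tr}$-type parity statements by a standard involution/pairing argument after incorporating the fixed-point-free involution on edges. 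The cleanest route is probably to define the alternative edge-set-level involution on $(\Gamma^{(n)})^{(1)}$ by reversing \emph{and} applying $\bar{\cdot}$ edgewise as above, but then to perturb it using the fixed-point-free $\bar{\cdot}$ on $\Gamma^{(1)}$ whenever $n$ forces a would-be fixed point, and to track that this never creates new fixed points; the bookkeeping that this pairing is genuinely fixed-point-free at each vertex is the main obstacle, and is exactly the analogue of the parity argument already invoked in the proof of Proposition~\ref{LemGrphFromQFund}.
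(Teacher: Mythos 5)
Your construction --- the product cost on $n$-step paths, made balanced by the reversal involution $(e_1,\dots,e_n)\mapsto(\bar e_n,\dots,\bar e_1)$ --- is exactly the paper's, and your verification of fairness (the telescoping sum) and of balancedness is fine. The gap is in the endgame, and it originates in a sign error: you assert that $(-1)^{n+1}\twe^n<0$ ``for all $n$ when $\twe>0$, and for even $n$ when $\twe<0$'', which is backwards. In fact $(-1)^{n+1}\twe^n<0$ for \emph{all} $n$ when $\twe<0$, and for \emph{even} $n$ when $\twe>0$; the loop-parity condition is therefore needed only when $\twe>0$ \emph{and} $n$ \emph{is odd}. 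Because you instead conclude that every $\twe>0$ requires the parity check, you end up wrestling with palindromic closed paths of even length --- a case where the reversal involution genuinely can have fixed points, but also a case where no parity condition is required --- and the argument trails off into an unfinished ``perturb the involution'' sketch that you yourself flag as the main obstacle.

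Once the case analysis is corrected, the one remaining case closes in a line, and you already have every ingredient on the table: for $\twe>0$ choose the involution on $\Gamma^{(1)}$ to be fixed-point-free (possible because each vertex carries an even number of loops, and only loops of weight $1$ could be fixed), and observe that for \emph{odd} $n$ a palindromic path $p=(e_1,\dots,e_n)$ would have to satisfy $\bar e_{(n+1)/2}=e_{(n+1)/2}$ at its middle edge, which is impossible. Hence the reversal involution is free on the set of closed $n$-paths based at any vertex, these pair off into orbits of size $2$, the loop count of $\Gamma^{(n)}$ at each vertex is even, and you obtain the fair and balanced $\twe^n$-cost. This middle-edge observation is precisely how the paper finishes the proof.
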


\begin{proof}
When $(e_1, e_2, \ldots, e_n)$ is an $n$-tuple of composable edges in $\Gamma$, we define the weight of the corresponding edge in $\Gamma^{(n)}$ to be $\prod_{j=1}^n \we(e_j)$.  This way, $\Gamma^{(n)}$ admits the constant source weight $\absv{\twe}^n$.  We can also define an involution on $\Gamma^{(n)}$ by sending $(e_1, \ldots, e_n)$ to $(\bar{e}_n, \ldots, \bar{e}_1)$, and make it a fair and balanced $(-\absv{\twe}^n)$-cost.  If the originally chosen involution $e\rightarrow \bar{e}$ was free and $n$ is odd, the involution $(\bar{e}_n, \ldots, \bar{e}_1)$ differs from $(e_1, \ldots, e_n)$ at least in the middle, so that we have in fact a $(-(-\twe)^n)$-cost.
\end{proof}

We note that, when $n$ is even, connectedness of $\Gamma$ does not imply connectedness of $\Gamma^{(n)}$.

\begin{Rem}
We give an interpretation of the above proposition in terms of the tensor $\cC^*$-functors between the categories $\Rep(\SU_q(2))$ for different values of $q$.  Namely, using \eqref{EqQFundEqForHalfSpin} successively, one sees that the morphism
\[
R_{u^{\smCirct n}} = (\id_{u^{\smCirct n-1}} \otimes R_u \otimes \id_{u^{\smCirct n-1}}) (\id_{u^{\smCirct n-2}} \otimes R_u \otimes \id_{u^{\smCirct n-2}}) \cdots R_u \in \Mor(u_o, u^{\smCirct n} \Circt u^{\smCirct n})
\]
satisfies
\begin{align*}
(R_{u^{\smCirct n}}^* \otimes \id_{u^{\smCirct n}}) (\id_{u^{\smCirct n}} \otimes R_{u^{\smCirct n}}) &= (-\sgn(q))^n \id_{u^{\smCirct n}},&
R_{u^{\smCirct n}}^* R_{u^{\smCirct n}} &= \aqn{2}^n.
\end{align*}
Thus, for $q'$ satisfying $\sgn{q'} = (-\sgn(q))^n$ and $\llbracket 2 \rrbracket_{q'} = \aqn{2}^n$, we get a $q'$-fundamental solution $(u^{u^{\smCirct n}}, R_{u^{\smCirct n}})$ in $\Rep(\SU_q(2))$. This defines a tensor $\cC^*$-functor $\Rep(\SU_{q'}(2)) \rightarrow \Rep(\SU_q(2))$, and any semi-simple module $\cC^*$-category over $\Rep(\SU_q(2))$ can be considered as one over $\Rep(\SU_{q'}(2))$.  The correspondence of fair and balanced graphs is as described in Proposition~\ref{PropNStepGraph}.
\end{Rem}

\begin{Prop}\label{PropCostVSGrphNorm}
Let $\Gamma = (V, E, s, t)$ be a connected graph.  If there is a fair and balanced $\twe$-cost $\we$ on $\Gamma$, then one has $\norm{\Gamma} \le \absv{\twe}$.  When the equality holds, the function $\we$ is constant on the set $E_{v,w}=\{ e \in E \mid s(e) = v, t(e) = w\}$ for any $(v, w) \in V \times V$.
\end{Prop}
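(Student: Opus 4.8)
The plan is to dominate the adjacency matrix $A = A(\Gamma)$ entrywise by an auxiliary symmetric nonnegative matrix $B$ manufactured from the cost, to bound $\norm{B}$ by a Schur-type (Cauchy--Schwarz) estimate, and then to extract the equality statement by feeding a Frobenius--Perron eigenvector back into that estimate.

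First I would introduce $a_{vw} = \sum_{e \in E_{v,w}} \we(e)$. Fairness says $\sum_w a_{vw} = \absv{\twe}$ for every vertex $v$, and, fixing an involution realising balancedness, $a_{wv} = \sum_{e \in E_{v,w}} \we(e)^{-1}$. Cauchy--Schwarz then gives $(\#E_{v,w})^2 \le a_{vw}\,a_{wv}$, with equality precisely when $\we$ is constant on $E_{v,w}$; hence $B := \big((a_{vw}a_{wv})^{1/2}\big)_{v,w}$ dominates $A$ entrywise. A further Cauchy--Schwarz in the summation variable, together with the two identities $\sum_w a_{vw} = \absv{\twe}$ and $\sum_v a_{wv} = \absv{\twe}$ (the second again a source cost, after relabelling), yields $\norm{B\eta}^2 \le \absv{\twe}^2 \norm{\eta}^2$ for every finitely supported $\eta$, so $\norm{B} \le \absv{\twe}$. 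Since all entries are nonnegative one has $\norm{\Gamma} = \norm{A} \le \norm{B}$, and the first assertion follows.

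For the equality case, assume $\norm{\Gamma} = \absv{\twe}$; then $\norm{A} = \norm{B}$ with $0 \le A \le B$. Frobenius--Perron theory — classically when $\Gamma$ is finite, and via \cite{Pru1} for infinite $\Gamma$, exactly as in the proof of Proposition~\ref{PropExCost} — supplies a strictly positive (formal) vector $c$ with $Ac = \absv{\twe}\,c$. Substituting $c$ for $\eta$ in the estimate above and using $\norm{Ac} = \absv{\twe}\norm{c}$, one sees that both Cauchy--Schwarz inequalities must be saturated at every vertex; in particular $\sum_w (B_{vw} - A_{vw})\,c_w = 0$ for all $v$, and since $B_{vw} \ge A_{vw}$ and $c_w > 0$ this forces $B_{vw} = A_{vw}$, i.e.\ $\we$ constant on $E_{v,w}$, for every pair $(v,w)$.

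The step I expect to be the real obstacle is this last one when $\Gamma$ is infinite: the argument as written is valid verbatim only when $\absv{\twe}$ is an $\ell^2$-eigenvalue of $A$, for only then does $c$ lie in $\ell^2$ and does ``$\norm{Ac} = \absv{\twe}\norm{c}$'' carry content. In general $c$ need not be square-summable, and one must argue separately that the measure $c^2$ — which a priori is merely \emph{excessive} for the random walk $P$ with $P_{vw} = a_{vw}/\absv{\twe}$ attached to the cost — is in fact $P$-\emph{invariant}; equivalently, that $P$ is recurrent whenever $\norm{\Gamma} = \absv{\twe}$. Only after this can the summation that upgrades ``$\le$'' to ``$=$'' be carried out. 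When $\Gamma$ is finite the difficulty disappears and the Perron--Frobenius argument above is complete as stated.
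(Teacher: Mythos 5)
Your proof of the first assertion is correct and is in essence the paper's argument in different packaging: the paper factors the adjacency matrix as $A(\Gamma)=B^*UB$ through $\ell^2(E)$, with $B(v)=\sum_{s(e)=v}\we(e)^{1/2}e$ (so that $B^*B=\absv{\twe}\,\id$ by fairness) and $U$ the edge-reversal unitary, while you collapse the same Cauchy--Schwarz computation onto the vertex space via the majorant $B_{vw}=(a_{vw}a_{wv})^{1/2}$ and a weighted Schur test. The real issue is the equality case. For finite $\Gamma$ your Frobenius--Perron argument is complete. For infinite $\Gamma$ --- which the proposition must cover, since it is applied to graphs such as $A_{\infty,\infty}$, $D_\infty^{*}$, $A_\infty$ and to the path graphs $\Gamma^{(n)}$ in Corollary~\ref{CorCostEqGrphNorm} --- you correctly identify that the vector $c$ supplied by \cite{Pru1} is only a formal eigenvector, typically not in $\ell^2$, so that ``$\norm{Ac}=\absv{\twe}\norm{c}$'' is vacuous and the saturation argument never starts. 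The repair you defer to (invariance of $c^2$, recurrence of the associated walk) is left unproven, and the gap is not a mere technicality: the abstract principle you are implicitly relying on --- that $0\le A\le B$ entrywise with $A$ irreducible and $\norm{A}=\norm{B}$ forces $A=B$ --- is false for infinite matrices. For instance, let $A$ be the adjacency matrix of the half-line $A_\infty$ (norm $2$, not attained) and let $B$ be obtained by adding a loop of weight $\delta$ at the end vertex with $0<\delta\le 1$; this rank-one perturbation creates no spectrum outside $[-2,2]$, so $\norm{B}=\norm{A}=2$ although $B\neq A$. The principle you need holds exactly when the norm of $A$ is attained by an $\ell^2$ eigenvector, which for infinite graphs it generally is not. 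So as written the second assertion is only established for finite $\Gamma$.

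For comparison, the paper treats the equality case by a local perturbation rather than an eigenvector: if $\we(e)\neq\we(f)$ for two edges with the same source and target, it modifies the edge-reversal unitary to $U'$ with $U'e=\bar{f}$ and $U'f=\bar{e}$; then $B^*U'B$ still has norm at most $\absv{\twe}$, but its $(s(e),t(e))$ entry exceeds that of $A(\Gamma)$ by $\we(e)^{1/2}\we(f)^{-1/2}+\we(f)^{1/2}\we(e)^{-1/2}-2>0$ while all other entries agree, and the conclusion $\norm{\Gamma}<\absv{\twe}$ is drawn from the strict entrywise domination. The same perturbation fits your framework (replace the diagonal pairing $\sum_e\we(e)^{1/2}\we(e)^{-1/2}$ giving $A_{vw}$ by the pairing along a transposition of $E_{v,w}$, which still lies below $(a_{vw}a_{wv})^{1/2}$), and it removes Frobenius--Perron theory from this step entirely; if you keep your eigenvector route instead, you must actually prove the recurrence or invariance statement you postponed.
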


\begin{proof}
Suppose that there is a fair and balanced $\twe$-cost on $\Gamma$.  Consider an operator $B$ from $\ell^2 V$ to $\ell^2 E$ defined by
\[
B(v) = \sum_{e\colon s(e) = v} \sqrt{\we(e)} \;e.
\]
Then, the condition $\sum_{e\colon s(e) = v} \we(e) = \absv{\twe}$ implies that $B^*B = |\twe|\id$, so $\norm{B} = \sqrt{\absv{\twe}}$.  Furthermore, consider the unitary operator $U$ on $\ell^2 E$ defined by $Ue = \bar{e}$.  Then, $\we(e) \we(\bar{e}) = 1$ implies that $B^* U B$ is equal to the adjacency matrix $A(\Gamma)$ of $\Gamma$.  Thus, we have
\[
\norm{\Gamma} = \norm{A(\Gamma)} \le \norm{B^*} \norm{U} \norm{B} = \absv{\twe}.
\]

Next, suppose that $\we(e) \neq \we(f)$ for some edges satisfying $(s(e), t(e)) = (s(f), t(f))$.  Then, one may modify the above unitary $U$ by setting $U e = \bar{f}$, $U f = \bar{e}$, and otherwise keeping the same definition.  Since
\[
\sqrt{\we(e)} \sqrt{\we(\bar{f})} + \sqrt{\we(f)} \sqrt{\we(\bar{e})} = \frac{\sqrt{\we(e)}}{\sqrt{\we(f)}} + \frac{\sqrt{\we(f)}}{\sqrt{\we(e)}} > 2,
\]
the matrix $B^* U B$ admits an entry strictly larger than that of $A(\Gamma)$ at the $(s(e), t(e))$-th place, and the same ones elsewhere.  Thus we have $\norm{B^* U B} > \norm{A(\Gamma)}$ and $\absv{\twe} > \norm{\Gamma}$.
\end{proof}

\begin{Cor}\label{CorCostEqGrphNorm}
Let $\Gamma$ be a symmetric graph.  Then, any fair and balanced $(-\norm{\Gamma})$-cost on $\Gamma$ comes from a Frobenius--Perron eigenvector at eigenvalue $\|\Gamma\|$ as in the proof of Proposition~\ref{PropExCost}.
\end{Cor}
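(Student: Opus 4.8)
\emph{Approach and reformulation.} I would deduce this from the equality case of Proposition~\ref{PropCostVSGrphNorm}, combined with the passage to higher-step graphs of Proposition~\ref{PropNStepGraph}. Note first that the existence of a fair and balanced cost already forces $\Gamma$ to have finite degree, so that $A(\Gamma)$ is a bounded self-adjoint operator on $\ell^2(\Gamma^{(0)})$ and $\norm{\Gamma} = \norm{A(\Gamma)}$. Let $\we$ be a fair and balanced $(-\norm{\Gamma})$-cost on the connected graph $\Gamma$. Applying Proposition~\ref{PropCostVSGrphNorm} with $\twe = -\norm{\Gamma}$ — which is an equality case — shows that $\we$ is constant on the set of edges from $v$ to $w$; write $\we(e) = a_{vw}$ there. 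Balancedness gives $a_{vw}a_{wv} = 1$, and fairness gives $\sum_w A(\Gamma)_{vw}\, a_{vw} = \norm{\Gamma}$ for every $v$. It therefore suffices to produce positive numbers $(c_v)_{v\in\Gamma^{(0)}}$ with $a_{vw} = c_w/c_v$ for adjacent $v,w$: the fairness relation then reads $A(\Gamma)c = \norm{\Gamma}\,c$, so $c$ is a (formal) Frobenius--Perron eigenvector at eigenvalue $\norm{\Gamma}$, and the cost it induces as in the proof of Proposition~\ref{PropExCost} is precisely $\we$. Such a $c$ exists exactly when the multiplicative cocycle $a$ is a coboundary, i.e.\ when $\prod_{i=1}^{\ell} a_{w_{i-1}w_i} = 1$ for every closed path $w_0\to w_1\to\cdots\to w_\ell = w_0$ in $\Gamma$ (applied to closed paths of length $2$, this cycle condition conversely recovers the constancy of $\we$ already). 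So everything comes down to this cycle condition.

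\emph{The cycle condition via even-step graphs.} Fix a closed path $\gamma = (e_1,\ldots,e_\ell)$ based at $w_0$ and set $c(\gamma) = \prod_i\we(e_i)$. By balancedness the reversed closed path $\bar\gamma = (\bar e_\ell,\ldots,\bar e_1)$ satisfies $c(\bar\gamma) = c(\gamma)^{-1}$, so the two closed paths $\gamma\cdot\bar\gamma$ and $\gamma\cdot\gamma$, both of length $2\ell$ and based at $w_0$, have $c(\gamma\bar\gamma) = 1$ while $c(\gamma^2) = c(\gamma)^2$. Hence, by positivity, it is enough to show that all closed paths of length $2\ell$ at $w_0$ carry the same cost. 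These are precisely the edges of $\Gamma^{(2\ell)}$ from $w_0$ to $w_0$, and by Proposition~\ref{PropNStepGraph} the product cost $\we^{(2\ell)}(f_1,\ldots,f_{2\ell}) = \prod_j\we(f_j)$ is a fair and balanced $(-\norm{\Gamma}^{2\ell})$-cost on $\Gamma^{(2\ell)}$; restricting it to the connected component $\Gamma^{(2\ell)}_0$ containing $w_0$ is again such a cost, since source costs and the involution both restrict. So by the equality case of Proposition~\ref{PropCostVSGrphNorm} applied to $\Gamma^{(2\ell)}_0$ it suffices to check that $\norm{\Gamma^{(2\ell)}_0} = \norm{\Gamma}^{2\ell}$.

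\emph{The norm of $\Gamma^{(2\ell)}_0$.} The vertex set of $\Gamma^{(2\ell)}_0$ is the set of vertices reachable from $w_0$ by a path of even length, and its adjacency operator is the compression of $A(\Gamma)^{2\ell}$ to $\ell^2$ of that set. If $\Gamma$ is not bipartite this set is all of $\Gamma^{(0)}$, so the norm is $\norm{A(\Gamma)^{2\ell}} = \norm{\Gamma}^{2\ell}$. If $\Gamma$ is bipartite, write $A(\Gamma) = \begin{pmatrix} 0 & M\\ M^* & 0\end{pmatrix}$ with respect to the two parts, $w_0$ in the first; then the set is the first part and the compression is $(MM^*)^\ell$, of norm $\norm{M}^{2\ell} = \norm{MM^*}^\ell = \norm{A(\Gamma)^2}^\ell = \norm{\Gamma}^{2\ell}$, using $\norm{MM^*} = \norm{M^*M}$. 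In both cases $\norm{\Gamma^{(2\ell)}_0} = \norm{\Gamma}^{2\ell}$, so the cycle condition holds, and with it the corollary; the argument applies verbatim to infinite $\Gamma$, the eigenvector $c$ then being a formal one, consistently with the Frobenius--Perron input of \cite{Pru1} used in Proposition~\ref{PropExCost}.

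\emph{Main obstacle.} The only delicate point is the norm computation for $\Gamma^{(2\ell)}_0$: one really must use the \emph{even}-step graph and single out the component of $w_0$, because for bipartite $\Gamma$ the graph $\Gamma^{(2\ell)}$ breaks into two pieces and one needs the half containing $w_0$ still to attain the maximal norm $\norm{\Gamma}^{2\ell}$ — which is exactly the identity $\norm{MM^*} = \norm{M^*M}$. Everything else is a matter of propagating to all even-step graphs the rigidity already contained in Proposition~\ref{PropCostVSGrphNorm} (equality of norms forces the cost to be locally constant), and then reading the eigenvector off the resulting cocycle.
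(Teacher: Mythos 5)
Your proof follows essentially the same route as the paper's: pass to the $n$-step graphs via Proposition~\ref{PropNStepGraph}, invoke the equality case of Proposition~\ref{PropCostVSGrphNorm} to force paths with the same length, source and target to have equal cost, and then read off a Frobenius--Perron eigenvector. The paper compresses the last two steps into a reference to Remark~\ref{RemPFArg}.(2) and does not address the disconnection of $\Gamma^{(n)}$ for even $n$; your explicit cocycle argument and the computation $\norm{\Gamma^{(2\ell)}_0}=\norm{\Gamma}^{2\ell}$ (via $\norm{MM^*}=\norm{M^*M}$ in the bipartite case) correctly supply those details.
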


\begin{proof}
Suppose that we are given a fair and balanced $(-\norm{\Gamma})$-cost on $\Gamma$.  By Proposition~\ref{PropNStepGraph}, we obtain a fair and balanced $(-\norm{\Gamma}^n)$-cost on $\Gamma^{(n)}$ for any $n$.  Since $\Gamma^{(n)}$ has the norm $\norm{\Gamma}^n$, Proposition ~\ref{PropCostVSGrphNorm} implies that the paths of same length, source, and target have the same cost.  Thus, the argument of Remark~\ref{RemPFArg}.(2) also works for $\Gamma$, and we can reconstruct a Frobenius--Perron eigenvector $(c_v)_v$ of $\Gamma$ such that $W(e) = c_{t(e)} / c_{s(e)}$.
\end{proof}

\subsection{Fair and balanced $t$-graphs from finite index subfactors}

A fair and balanced cost structure can also be constructed on the principal graph of a subfactor.  In this section we freely use constructions from subfactor theory, see for example~\cite{Good1} and \cite{Eva1} for details.

Let $N \subset M$ be a II$_1$-subfactor of finite index.  Associated to this inclusion are the $\cC^*$-categories $\bimod{N}{N}$ of $N\mhyph N$-bimodules and $\bimod{M}{N}$ of $M\mhyph N$-bimodules, generated by the object $N \in \tensor[_N]N{_N}$ under the pair of adjoint functors
\begin{align*}
\tensor[_M]M{_N} \otimes - \colon \bimod{N}{N} &\rightarrow \bimod{M}{N},&
\tensor[_N]M{_M} \otimes - \colon \bimod{M}{N} &\rightarrow \bimod{N}{N},
\end{align*}
where the $\otimes$ denote the appropriate Connes fusion products.

Let $\cat{X}$ denote the direct sum $\cC^*$-category generated by a copy of $\bimod{N}{N}$ and a copy of $\bimod{M}{N}$, so that in particular $\Mor(X, Y) = 0$ wherever $X \in \bimod{N}{N}$ and $Y \in \bimod{M}{N}$.  We can define an endofunctor $F$ of $\cat{X}$ by taking $\tensor[_M]M{_N} \otimes -$ on $\bimod{N}{N}$ and $\tensor[_N]M{_M} \otimes -$ on $\bimod{M}{N}$.  Then $F^2$ is given by $\tensor[_N]M{_N} \otimes -$ on $\bimod{N}{N}$, and $\tensor[_M]M{_N} \otimes -$ on $\bimod{M}{N}$.  The latter can be written as $\tensor[_M]{(M_1)}{_M} \otimes -$, with $N\subset M \subset M_1$ the basic construction, since we have a natural $M\mhyph M$-bimodule isomorphism
\begin{equation}\label{EqMoMeqM1}
\tensor{M}{_N} \otimes \ld{N}M \rightarrow M_1, \quad a \otimes b \mapsto [M : N]^{1/2} a e_N b
\end{equation}
which is compatible with the right $M$-valued inner products $\langle a\otimes b, a' \otimes b' \rangle = b^* E_N(a^* a') b'$ on $\tensor{M}{_N} \otimes \ld{N}M $ and $\langle x, y \rangle = E_M(x^* y)$ on $M_1$.

The natural embeddings $N \rightarrow M$ and $M \rightarrow M_1$ induce a natural transformation $R_0$ from $\id_\cat{X}$ to $F^2$, and in particular $F$ may be identified with a self-adjoint object of $\Ef^J$, where $J$ is a parametrization of the irreducible objects in $\cat{X}$.

\begin{Prop}
The transformation $R_0$ satisfies
\begin{align*}
R_0^* R_0 &= \id,&
(R_0^* \otimes \id) (\id \otimes R_0) &= [M : N]^{-1/2} \id.
\end{align*}
\end{Prop}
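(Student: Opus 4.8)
The plan is to make the natural transformation $R_0$ completely explicit on the two blocks of $\cat{X}$, to identify its adjoint with the relevant conditional expectations, and then to reduce both identities to a computation at the generating objects. The only non-formal inputs will be the Jones relation $E_M(e_N)=[M:N]^{-1}$ for the basic construction and the normalising constant $[M:N]^{1/2}$ appearing in~\eqref{EqMoMeqM1}.

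First I would spell out $R_0$. On $\bimod{N}{N}$, where $F^2=\tensor[_N]M{_N}\otimes_N-$, the transformation $R_0$ is obtained by tensoring on the left with the inclusion $\eta\colon N\hookrightarrow\tensor[_N]M{_N}$ of the unit object, i.e.\ $R_{0,X}=\eta\otimes_N\id_X$; and on $\bimod{M}{N}$, where $F^2=\tensor[_M]{(M_1)}{_M}\otimes_M-$, it is tensoring on the left with the inclusion $\iota\colon M\hookrightarrow\tensor[_M]{(M_1)}{_M}$, i.e.\ $R_{0,Y}=\iota\otimes_M\id_Y$. With respect to the operator-valued inner products recalled just before the statement one has $\eta^*=E_N\colon M\to N$ and $\iota^*=E_M\colon M_1\to M$, because $E_N$ and $E_M$ are bimodule maps restricting to the identity on $N$ and on $M$. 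Moreover $F$ respects the right $\bimod{N}{N}$-module structure of $\cat{X}$ (one has $F(P\otimes_N X)=F(P)\otimes_N X$, and similarly inside $\bimod{N}{N}$) and $R_0,R_0^*$ are module natural transformations, so every map entering the two identities has the form (structure morphism)$\,\otimes\,\id_X$ and it suffices to check the identities at the two unit objects $N\in\bimod{N}{N}$ and $\tensor[_M]M{_N}\in\bimod{M}{N}$.

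The first identity is then immediate: $R_0^*R_0$ equals $E_N\circ\eta=\id_N$ at $N$ and $E_M\circ\iota=\id_M$ at $\tensor[_M]M{_N}$. For the second, unwinding the tensor structure of $\Ef^J$ shows that $(R_0^*\otimes\id)(\id\otimes R_0)$, evaluated at $N$, becomes the composite $F(N)\xrightarrow{F(\eta)}F^3(N)\xrightarrow{(R_{0,F(N)})^*}F(N)$ (for one orientation convention of $\otimes$; the mirror composite, for the other, gives the same scalar). Here $F^3(N)=\tensor[_M]M{_N}\otimes_N\tensor[_N]M{_N}$ is identified with $\tensor[_M]{(M_1)}{_N}$ via~\eqref{EqMoMeqM1}; chasing the identifications, $F(\eta)\colon\tensor[_M]M{_N}\to\tensor[_M]{(M_1)}{_N}$ sends $m\mapsto[M:N]^{1/2}me_N$, while $R_{0,F(N)}\colon\tensor[_M]M{_N}\hookrightarrow\tensor[_M]{(M_1)}{_N}$ is the inclusion, with adjoint $E_M$. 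Composing,
\[
m\ \longmapsto\ [M:N]^{1/2}\,E_M(me_N)\ =\ [M:N]^{1/2}\,m\,E_M(e_N)\ =\ [M:N]^{-1/2}\,m ,
\]
using $E_M(e_N)=[M:N]^{-1}$, so the composite is $[M:N]^{-1/2}\id$. The computation at $\tensor[_M]M{_N}$ is parallel: there $\id\otimes R_0$ is the inclusion $\tensor[_N]M{_N}\hookrightarrow\tensor[_N]{(M_1)}{_N}$ while $R_0^*\otimes\id$ is the adjoint of $m\mapsto[M:N]^{1/2}e_Nm$, which on $M\subseteq M_1$ again reduces to multiplication by $[M:N]^{-1/2}$ by the same relation; hence $(R_0^*\otimes\id)(\id\otimes R_0)=[M:N]^{-1/2}\id$.

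I expect the only delicate point to be the bookkeeping in the last paragraph: propagating correctly the chain of canonical identifications — above all the factor $[M:N]^{1/2}$ in~\eqref{EqMoMeqM1} and the distinction between the $N$-valued and the $M$-valued inner products — so that the composite natural transformation is read off correctly. Once that is done, the two assertions reduce exactly to $E_N\eta=\id$, $E_M\iota=\id$ and $E_M(e_N)=[M:N]^{-1}$.
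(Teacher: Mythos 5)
Your proof is correct and takes essentially the same route as the paper's: identify $R_0^*$ with the conditional expectations $E_N$ and $E_M$ (which gives the first identity for free), and reduce the conjugate-equation composite on each of the two blocks to the relation $E_M(e_N)=[M:N]^{-1}$ together with the normalization factor $[M:N]^{1/2}$ in~\eqref{EqMoMeqM1}. The only cosmetic difference is on the $\bimod{M}{N}$ block, where the paper rewrites $1_{M_1}$ via a Pimsner--Popa basis while you instead compute the adjoint of the inclusion $M\hookrightarrow M_1$ directly; both computations rest on the same relation and yield the same scalar.
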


\begin{proof}
By construction, $R_0^*$ is induced by the conditional expectations $E_N\colon M \rightarrow N$ and $E_M\colon M_1 \rightarrow M$.  This implies the first equality.

Next, let us compute the effect of the operation $(R_0^* \otimes \id) (\id \otimes R_0)$.  If X is an $N$-$N$ bimodule, this is given by $a \otimes x \rightarrow E_M(a \otimes 1) \otimes x$ for $a \in M, x \in X$, where we identify $a \otimes 1$ with an element of $M_1$ as in \eqref{EqMoMeqM1} and apply $E_M$.  Using $E_M(e_N) = [M : N]^{-1}$, we can compute $E_M(a \otimes 1) = [M : N]^{-1/2} a$.  If X is an $M$-$N$ bimodule, $R_0$ can be also written as
\[
x \mapsto [M : N]^{-1/2} \sum_j m_j \otimes m_j^* \otimes x
\]
using a Pimsner--Popa basis $(m_j)_j$ of $M$ over $N$, in the notation of~\cite{Pim1}.  Then, the effect of $(R^* \otimes \id) (\id \otimes R)$ can be expressed as
\[
[M : N]^{-1/2} \sum_j E_N(m_j) m_j^* \otimes x = [M : N]^{-1/2} 1 \otimes x.
\]
This completes the proof.
\end{proof}

It follows that $(F, [M : N]^{1/4} R_0)$ is a $q$-fundamental solution in $\End(\cat{X})$ for the $q$ satisfying $\qn{2} = -[M : N]^{1/2}$.  If we change the embedding $M \rightarrow M_1$ to $x \mapsto -x$ in the definition of $R_0$, we obtain a $q$-fundamental solution with $\qn{2} = [M : N]^{1/2}$.  By construction, the corresponding graph is the principal graph of $N \subset M$.   The `$2$-step construction' of Proposition~\ref{PropNStepGraph} on the even vertices gives a $(1+q^2)$-fundamental solution of Pinzari--Roberts~\cite{Pin4}.

\begin{Rem}
Suppose that the inclusion $N \subset M$ is of finite depth, so that the associated principal graph is a finite graph of norm $[M : N]^{1/2}$.  Corollary~\ref{CorCostEqGrphNorm} implies that the $q$-fundamental solution $(F, R)$ constructed from this inclusion is isomorphic to the one given in the existential part of Proposition~\ref{PropExCost} (this can also be verified by the explicit computation of the Frobenius reciprocity, see~\cite[Theorem~9.71]{Eva1}), and hence does not retain anything from the original subfactor except for its principal graph.  In the infinite depth case, there is room for more structure.  For example, for any irreducible unitary representation $u$ of some compact quantum group, there is a subfactor of index $\dim_q(u)$ whose principal graph is the decomposition graph of $1, u, u \Circt \bar{u}, u \Circt \bar{u} \Circt u, \ldots$~\cite{Ban4,Ued1}.  If $u$ is the fundamental representation of $\SU_q(2)$ itself, we recover the graph $A_\infty$ with the fair and balanced $\aqn{2}$-cost.
\end{Rem}

\subsection{Explicit examples}

\begin{Exa}\label{ExaPodlesSphGraphs}
Consider the graph $A_{\infty,\infty}$ (Figure~\ref{FigAinfinf}), i.e.~the Cayley graph of $(\Z,\{-1,1\})$. For $\absv{q} = 1$, this graph obviously has a unique structure of a fair and balanced $2$-graph, every edge having weight $1$.  When $0<\absv{q}<1$, take $x\in \mathbb{R}$ and define the edge weights $\we_{q, x}$ by
\[
\we_{q,x}(m\rightarrow m+1)=\absv{\frac{q^{x+m+1}+q^{-x-m-1}}{q^{x+m}+q^{-x-m}}},\quad \we_{q, x}(m+1\rightarrow m) = \absv{\frac{q^{x+m}+q^{-x-m}}{q^{x+m+1}+q^{-x-m-1}}},
\]
and put $\we_{q, \infty}(m \rightarrow m + 1) = q^{-1}$ and $\we_{q, \infty}(m  + 1 \rightarrow m) = q$.  Then the $(A_{\infty,\infty},\we_{q,x})$ are all fair and balanced $\qn{2}$-graphs based on $A_{\infty,\infty}$.

To see this, choose a fair and balanced $\qn{2}$-cost, and put $a = \we(0\rightarrow 1)>0$. Then we have
\begin{align}\label{EqnIneqa}
a &\leq \aqn{2}, &
a^{-1} &\leq \aqn{2}.
\end{align}
Let us write $\we(1\rightarrow 2) = b$ and $\we(-1\rightarrow 0) = c$. By the constant source weight condition, $b+a^{-1} = \aqn{2}$ and $c^{-1}+a = \aqn{2}$. Since \eqref{EqnIneqa} also holds for $b$ and $c$, we find
\begin{align*}
a &\leq \frac{\aqn{3}}{\aqn{2}},&
a^{-1} &\leq \frac{\aqn{3}}{\aqn{2}}.
\end{align*}
By induction, we deduce that
\begin{align*}
a &\leq \frac{\aqn{n+1}}{\aqn{n}},&
a^{-1}&\leq \frac{\aqn{n+1}}{\aqn{n}},
\end{align*}
and hence, taking a limit $n\rightarrow \infty$, that $q\leq a \leq q^{-1}$. But each such $a$ can be written as $\frac{q^{x+1}+q^{-x-1}}{q^{x}+q^{-x}}$ for a unique $x \in \mathbb{R}\cup \{\pm{\infty}\}$. Since the $\we(m\rightarrow m+1)$ are obviously determined by the value of $a$, and since the weights $\we_{q,x}$ as above are easily seen to give a fair and balanced $\qn{2}$-graph, our claim is shown.

Since the flip $m \mapsto -m$ exchanges parameters $x$ and $-x$, and since the translation $m\mapsto m\pm 1$ exchanges parameters $x$ and $x\mp 1$, we may restrict to the case of $0 \le x<1$.  In this range, the weighted graphs are mutually non-isomorphic and exhaust all the possibilities of such weighted graphs.  The vertices $m\in \Z$ of $(A_{\infty,\infty},\we_{q,x})$ correspond to the family of Podle\'{s} spheres $S_{qc(x+m)}^2$ for $\SU_q(2)$ for a particular parametrization $x\rightarrow c(x)$, see Example \ref{ExGenPod}.
\begin{figure}[p]
\centerline{
\xymatrix@C=8em{ \cdots \underset{m-1}{\bullet} \ar @/^/[r]^{\absv{\frac{q^{x+m}+q^{-x-m}}{q^{x+m-1}+q^{-x-m+1}}}}  & \ar @/^/[l]^{\absv{\frac{q^{x+m-1}+q^{-x-m+1}}{q^{x+m}+q^{-x-m}}}} \underset{m}{\bullet} \ar @/^/[r]^{\absv{\frac{q^{x+m+1}+q^{-x-m-1}}{q^{x+m}+q^{-x-m}}}} & \underset{m+1}{\bullet} \ar @/^/[l]^{\absv{\frac{q^{x+m}+q^{-x-m}}{q^{x+m+1}+q^{-x-m-1}}}}  \cdots}
}
\caption{$A_{\infty,\infty}$}
\label{FigAinfinf}
\end{figure}
\end{Exa}

\begin{Exa}\label{ExaQRProjSp}
For any $0 < \absv{q} \le 1$, the graph $D_\infty^{*}$ (Figure~\ref{FigDinf}) admits a unique structure of a fair and balanced $\qn{2}$-graph.  The quantum homogeneous space corresponding to either of the endpoints is the quantum real projective plane algebra $\CoL{\R P^2_q}$ (\cite{Haj1,Haj2}), see also Example~\ref{ExaQRProjEmb}.
\end{Exa}
\begin{figure}[p]
\begin{minipage}[b]{0.45\linewidth}
\centerline{
\xymatrix@R=0.5em@C=4em{
\overset{*}{\bullet} \ar@/^/[dr]^{\aqn{2}} \\
& \underset{1}{\bullet} \ar@/^/[ul]^(0.7){} \ar@/_/[dl] \ar@/^/[r]^{\absv{\frac{q^2 + q^{-2}}{q + q^{-1}}}} & \underset{2}{\bullet} \ar@/^/[l]^{\absv{\frac{q + q^{-1}}{q^2 + q^{-2}}}} \ar@/^/[r]^{\absv{\frac{q^3+q^{-3}}{q^2 + q^{-2}}}} & \underset{3}{\bullet} \ar@/^/[l]^{\absv{\frac{q^2 + q^{-2}}{q^3+q^{-3}}}} \cdots\\
\underset{\tilde{*}}{\bullet} \ar@/_/[ur]_{\aqn{2}}
}
}
\caption{$D_\infty^{*}$}
\label{FigDinf}
\end{minipage}
\hspace{0.5cm}
\begin{minipage}[b]{0.45\linewidth}
\centerline{
\xymatrix@=0.5em{
& & & \ar @/^/[dlll]^{\absv{q}} \\
\bullet \ar @/^/[urrr]^{\absv{q}^{-1}} \ar @/^/[ddd]^{\absv{q}} \\
\\
\\
\bullet \ar @/^/[uuu]^{\absv{q}^{-1}}  \ar @/^/[drrr]^{\absv{q}} \\
& & &\ar @/^/[ulll]^{\absv{q}^{-1}} \ar@{--}@(r,r)[uuuuu]
}
}
\caption{$A_n^{(1)}$ ($n \ge 1$, $n+1$ vertices)}
\label{FigAn}
\end{minipage}
\end{figure}

\begin{Exa}\label{ExaCyclicSubgrp}
Consider the graphs $A_n^{(1)}$ for $1 \le n < \infty$ (Figure~\ref{FigAn}). Then one checks that the only possible fair and balanced $\aqn{2}$-cost on it is obtained by giving the weight $\absv{q}$ to (say) the counter-clockwise edges. This corresponds to the subgroup $\Zz_{n+1}\subseteq U(1) \subseteq \SU_q(2)$.
\end{Exa}

\begin{Exa} Consider the graph $E_6^{(1)}$ (Figure~\ref{FigE6}). Then one can check that this graph has a structure of a fair and balanced $\aqn{2}$-graph only when $\absv{q}=1$.  The same goes for the graphs $E_7^{(1)}$, $E_8^{(1)}$, and $D_n^{(1)}$ for $4 \le n$. These correspond to the finite subgroups of $\SU_{\pm 1}(2)$ which are central extensions by $\Zz_2$ of $A_4$, $S_4$, $A_5$ and the dihedral groups of order $2 n$.
\end{Exa}

\begin{Exa}\label{Exaqminus1AmDn}
For $q = -1$, we find a unique structure of compatible fair and balanced $(-2)$-graph for each of the types $A_m'$ ($2 \le m \le \infty$, Figure~\ref{FigAm'}), $D_m'$ ($3 \le m < \infty$, Figure~\ref{FigDm'}).

\begin{figure}[p]
\begin{minipage}[b]{0.45\textwidth}
\centerline{
\xymatrix@=3em{
&&\bullet \ar @/^/[d]^{^2} &&
\\ && \bullet \ar @/^/[d]^{^{3/2}} \ar @/^/[u]^{^{1/2}} &&
\\ \bullet  \ar @/^/[r]^{_2}  & \ar @/^/[l]^{^{1/2}} \ar @/^/[r]^{_{3/2}} \bullet & \ar @/^/[l]^{^{2/3}} \ar @/^/[r]^{_{2/3}} \ar @/^/[u]^{^{2/3}} \bullet & \ar @/^/[l]^{^{3/2}} \ar @/^/[r]^{_{1/2}} \bullet& \ar @/^/[l]^{^2} \bullet
}
}
\caption{$E_6^{(1)}$}
\label{FigE6}
\end{minipage}
\hspace{0.5cm}
\begin{minipage}[b]{0.45\textwidth}
\centerline{
\xymatrix{
\bullet \ar@/^/[dr]^{2} \\
& \bullet \ar@/^/[ul]^{1/2} \ar@/_/[dl]_{1/2} \ar@/^/[r]^{1} &  \bullet \ar@/^/[l]^{1} \ar@/^/[r]^{1} & \cdots \ar@/^/[l]^{1} \ar@/^/[r]^{1} & \bullet \ar@/^/[l]^{1} \ar@(ur,dr)[0,0]^{1} \\
\bullet \ar@/_/[ur]_{2}
}
}
\caption{$D_m'$ ($3 \le m$, $m$ vertices)}
\label{FigDm'}
\end{minipage}
\end{figure}
\end{Exa}

\begin{Exa}\label{ExaGraphAinfty'}
For a generic negative $q$, the only new example is the graph $A_\infty'$ (Figure~\ref{FigAinf'}) whose vertices are labeled by nonnegative integers.  One may give a compatible fair and balanced $\qn{2}$-graph structure as follows.  Notice that the weights on rightward edges are monotonically increasing when $-1 < q$.  This also gives a coideal, see Example~\ref{ExaCoidTypeAinf'} and~\cite{Tom2}.
\begin{figure}[p]
\begin{minipage}[b]{0.45\textwidth}
\centerline{
\xymatrix{
\bullet \ar@(dl,ul)[0,0]^{1} \ar@/^/[r]^{1} & \cdots \ar@/^/[r]^{1} \ar@/^/[l]^{1} & \bullet \ar@(ur,dr)[0,0]^{1} \ar@/^/[l]^{1}
}
}
\caption{$A_m'$ ($2 \le m$, $m$ vertices)}
\label{FigAm'}
\end{minipage}
\hspace{0.5cm}
\begin{minipage}[b]{0.45\textwidth}
\centerline{
\xymatrix@C=5em{
\underset{0}{\bullet} \ar@(dl,ul)[0,0]^{1} \ar@/^/[r]^{\atn{3}}  & \underset{1}{\bullet} \ar@/^/[l]^{\frac{1}{\atn{3}}} \ar@/^/[r]^{\frac{\atn{5}}{\atn{3}}}& \cdots \ar@/^/[l]^{\frac{\atn{3}}{\atn{5}}}
}
}
\caption{$A_\infty'$ $(t = i|q|^{1/2})$}
\label{FigAinf'}
\end{minipage}
\end{figure}
\end{Exa}

\begin{Exa} Consider the graph $\xymatrix{\bullet \ar @<0.5ex>[r] & \ar @<0.5ex>[l] \bullet}$. Then this does not admit the structure of a fair and balanced $\qn{2}$-graph for any $q$ with $0<|q|\leq 1$.  Similarly, when $0 < \left| q \right| < 1$, the graphs $D_n$ for odd $n$, and the graphs $A_m'$ for $3 \le m < \infty$, in the notation of \cite[Appendix]{Tom1}, do not admit fair and balanced $\qn{2}$-costs for $0<|q|\leq 1$.  Hence we obtain that there is no quantum homogeneous space corresponding to these graphs, refining the result of Tomatsu stating the non-existence of such algebras among the coideals of $\CoL{\SU_q(2})$.\footnote{This (see also Example~\ref{ExaQ-1Cids}) corrects the error in~\cite[Theorem~7.1]{Tom1}, which claimed the existence of a coideal of type $D_1$ and the nonexistence of the types $A_m'$ for any negative $q$ due to a circular argument in its proof; Tomatsu has notified us of a correct proof based on the `generators and relations' approach~\cite
 {Tom2}.  Our result agrees with his corrections.}
\end{Exa}

\section{Classification of $SU_q(2)$-homogeneous spaces for $|q|\approx1$}

Using the results of the previous two sections, let us give now a direct combinatorial proof of the classification of quantum homogeneous spaces for $\SU(2)$, as first obtained by Wassermann in \cite{Was1} by using `half' of the categorical structure and more computational techniques.  We shall say that a graph is of extended ADE type if it is a point and a double loop, or one of $A_n^{(1)}$ for $1 \ge n$, $D_n^{(1)}$ for $4 \le n$, $E_n^{(1)}$ for $n = 6, 7, 8$, $A_{\infty, \infty}$, $D_\infty^{*}$, or $A_\infty$, that is, one of the graphs appearing in~\cite{Was1}.

\begin{Prop}\label{PropW2}
Up to equivariant Morita equivalence, the quantum homogeneous spaces over $\SU(2)$ are all of the form $H\backslash\SU(2)$ for some closed subgroup $H$ of $\SU(2)$.
\end{Prop}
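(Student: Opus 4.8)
The plan is to invoke Theorem~\ref{TheoF3} with $q=1$, which reduces the statement to the claim that every connected fair and balanced $2$-graph arises, up to isomorphism, from a closed subgroup $H\subseteq\SU(2)$ via the induced ergodic action $H\backslash\SU(2)$, together with the observation that equivariant Morita equivalence of the homogeneous spaces corresponds to isomorphism of the associated graphs. Since $q+q^{-1}=2$, Proposition~\ref{PropCostVSGrphNorm} shows that any connected graph $\Gamma$ carrying a fair and balanced $2$-cost must have $\|\Gamma\|\le 2$, and moreover the cost is constant on each edge-bundle $E_{v,w}$; so the first real step is the purely combinatorial classification of connected symmetric graphs of norm $\le 2$ with an even number of loops at each vertex (the ``even loops'' condition is the $\twe>0$ requirement in the definition of a fair and balanced $\twe$-graph). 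These are exactly the extended ADE diagrams (in the sense of the paragraph preceding the proposition), a classical fact; one then checks that each such graph admits a \emph{unique} fair and balanced $2$-cost, which follows from Corollary~\ref{CorCostEqGrphNorm} (the cost comes from a Frobenius--Perron eigenvector at eigenvalue $\|\Gamma\|=2$, and the Perron eigenvector of an extended ADE diagram is unique up to scaling, hence the cost $c_{t(e)}/c_{s(e)}$ is canonical) — the cases already appear as Examples~\ref{ExaCyclicSubgrp} and the $E_n^{(1)},D_n^{(1)}$ examples, plus $A_{\infty,\infty}$ (the $|q|=1$ case of Example~\ref{ExaPodlesSphGraphs}), $D_\infty^{*}$, $A_\infty$, and the point with a double loop.

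Next I would match each of these graphs with a closed subgroup of $\SU(2)$ by directly computing the module category (equivalently the fair and balanced $2$-graph) attached to $H\backslash\SU(2)$. Concretely, the category of $H\backslash\SU(2)$-equivariant Hilbert spaces is $\Rep(H)$ as a module category over $\Rep(\SU(2))$ via restriction, so the associated graph has vertex set $\widehat{H}$ and the number of edges from $\pi$ to $\sigma$ equal to $\dim\Hom_H(\sigma,\pi\otimes\Res^{\SU(2)}_H u_{1/2})$; this is precisely the McKay/restriction graph of $H\subseteq\SU(2)$. The McKay correspondence then identifies: the double-loop point with $H=\SU(2)$ itself; the cyclic graphs $A_n^{(1)}$ with the cyclic subgroups $\Zz_{n+1}\subseteq U(1)\subseteq\SU(2)$ (as in Example~\ref{ExaCyclicSubgrp}); $D_n^{(1)}$ with the binary dihedral groups; $E_{6,7,8}^{(1)}$ with the binary tetrahedral, octahedral and icosahedral groups; the two-sided infinite line $A_{\infty,\infty}$ with the maximal torus $U(1)$; $D_\infty^{*}$ with the normalizer $N(U(1))$ (binary dihedral ``of infinite order''); and $A_\infty$ with the trivial subgroup $\{e\}$, i.e.\ $\SU(2)$ acting on itself. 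Uniqueness of the fair and balanced $2$-cost means this assignment is a bijection onto isomorphism classes of connected fair and balanced $2$-graphs, and running Theorem~\ref{TheoF3} backwards shows every quantum homogeneous space over $\SU(2)$ is equivariantly Morita equivalent to one of the form $H\backslash\SU(2)$.

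The main obstacle I expect is not the categorical bookkeeping but the two infinite cases and the bound $\|\Gamma\|\le 2$: one must be careful that the Frobenius--Perron / Prüfer input (cited around Proposition~\ref{PropExCost}) is being used correctly for graphs with infinitely many vertices, and that no exotic connected graph of norm exactly $2$ is missed. For norm strictly less than $2$ the finite ADE-type graphs $A_n,D_n,E_n$ are ruled out because they are \emph{not} symmetric in a way admitting a balanced cost with even loop count (or, more simply, because $q+q^{-1}<2$ would force $q$ a root of unity, outside the allowed range), so only the affine/extended diagrams survive; making this dichotomy airtight is the delicate point. A secondary, more routine, subtlety is verifying that distinct subgroups $H$ up to conjugacy give non-isomorphic graphs and that conjugate subgroups give isomorphic ones — but this is immediate from the McKay description and the equivariant Morita equivalence statement in \cite{DCY2}. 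Once the graph classification and the subgroup dictionary are in place, the proposition follows formally.
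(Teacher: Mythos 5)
Your overall route is the paper's route: reduce via Theorem~\ref{TheoF3} to classifying connected fair and balanced $2$-graphs, bound the norm by Proposition~\ref{PropCostVSGrphNorm}, get uniqueness of the cost from Corollary~\ref{CorCostEqGrphNorm} (via the Frobenius--Perron eigenvector), and then realize the extended ADE list by the McKay graphs of closed subgroups. The subgroup dictionary you give is correct and is exactly what the paper dismisses with ``as is well known.''

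There is, however, one genuine gap, precisely at the point you yourself flag as ``the delicate point'': excluding connected graphs of norm strictly less than $2$. Neither of your two proposed justifications works. The finite type $A_n$, $D_n$, $E_n$ diagrams \emph{are} symmetric graphs with no loops, so the ``not symmetric in a way admitting a balanced cost with even loop count'' objection has no content; and the remark that $\twe=q+q^{-1}<2$ would force $q$ to be a root of unity presupposes that any fair and balanced cost on such a graph must have $\absv{\twe}=\norm{\Gamma}$, which is exactly what needs to be proved --- Proposition~\ref{PropCostVSGrphNorm} only gives the inequality $\norm{\Gamma}\le\absv{\twe}$, so a priori a norm-$\sqrt{3}$ graph could still carry a fair and balanced $2$-cost. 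The paper closes this gap by observing that the connected graphs of norm $<2$ are trees, and by Remark~\ref{RemPFArg}.(2) \emph{any} fair and balanced cost on a tree is induced by a Frobenius--Perron eigenvector, hence has $\absv{\twe}=\norm{\Gamma}<2$, contradicting $\twe=2$. With that one argument inserted, your proof matches the paper's.
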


\begin{proof}
We first claim that the associated graph must be of the extended ADE type (cf.~\cite[Theorem~1]{Was1}).  Indeed, by Proposition~\ref{PropCostVSGrphNorm}, the possible graphs must have norm at most $2$.  Moreover, the ones with norm smaller than $2$ are trees, hence cannot happen by Remark~\ref{RemPFArg}.2.  Since the involution has to be fixed point free, we obtain that the graph is of extended ADE type.

Now, Corollary~\ref{CorCostEqGrphNorm} implies that the fair and balanced cost is uniquely determined by the graph structure.  Hence the only classification parameter is the graph alone.  As is well known, all these graphs are realized by closed subgroups of $\SU(2)$.
\end{proof}

We obtain an analogous classification for the case of $q = -1$ by the same strategy as above.

\begin{Prop}\label{PropWOqmin1}
The quantum homogeneous spaces over $\SU_{-1}(2)$ are all strongly Morita equivalent to a coideal of $C(\SU_{-1}(2))$.
\end{Prop}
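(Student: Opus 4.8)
The plan is to mimic the strategy of Proposition~\ref{PropW2}, but using the fact that $q=-1$ forces $q+q^{-1}=-2$ and $\sgn(q)<0$, so the relevant graphs are the connected fair and balanced $(-2)$-graphs. First I would invoke Theorem~\ref{TheoF3}: a quantum homogeneous space over $\SU_{-1}(2)$ corresponds, up to equivariant Morita equivalence, to a connected fair and balanced $(q+q^{-1})=(-2)$-graph $(\Gamma,\we)$. By Proposition~\ref{PropCostVSGrphNorm}, any such graph has $\norm{\Gamma}\le 2$, and when $\norm{\Gamma}<2$ the graph is a tree, while for $\norm{\Gamma}=2$ Corollary~\ref{CorCostEqGrphNorm} tells us the cost is the unique one coming from a Frobenius--Perron eigenvector. (Note that since $q=-1<0$, the "even number of loops" condition of the definition of a fair and balanced $\twe$-graph is vacuous, so loops at vertices and fixed points of the involution are now permitted — this is exactly why $A_m'$ and $D_m'$ appear for $q=-1$ but not for generic $q$.) So the combinatorial data is again essentially just the graph.

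Next I would enumerate the connected graphs of norm $\le 2$ admitting a fair and balanced $(-2)$-cost: these are the extended ADE graphs from Proposition~\ref{PropW2} together with the new types that the relaxed loop/fixed-point condition allows, namely $A_m'$ for $2\le m\le\infty$ and $D_m'$ for $3\le m<\infty$ (Example~\ref{Exaqminus1AmDn}), and I should double-check that no further graph of norm exactly $2$ with loops or a fixed-point-requiring involution has been missed (the classification of graphs of norm $\le 2$ is classical: these are precisely the affine/extended Dynkin diagrams plus the "tadpole"-type graphs $A_m'$, $D_m'$ arising from folding). Each of these graphs carries a unique fair and balanced $(-2)$-cost. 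So the classification reduces to: for each such graph, exhibit a coideal of $C(\SU_{-1}(2))$ realizing it, and conclude by Theorem~\ref{TheoF3} that the corresponding quantum homogeneous space is equivariantly Morita equivalent to that coideal.

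For the graphs already appearing in the classical $q=1$ picture, I would use that $\SU_{-1}(2)\cong O_2^+$ together with the known correspondence between closed "quantum subgroups" and coideals: the extended ADE graphs are realized by coideals coming from subgroups, just as at $q=1$ (here one can invoke Tomatsu's classification~\cite{Tom1} of coideal-type quantum homogeneous spaces over $\SU_q(2)$, valid at $q=-1$, together with its correction noted in the excerpt). For the genuinely new types $A_m'$ (including $A_\infty'$, cf.~Example~\ref{ExaGraphAinfty'} and~\cite{Tom2}) and $D_m'$, I would again cite Tomatsu's corrected list of coideals of $C(\SU_{-1}(2))$ and match each graph to the corresponding coideal; the matching is carried out by computing the decomposition/fusion graph of the standard generating object restricted to the coideal module category and checking it is the claimed $\Gamma$.

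The main obstacle I anticipate is the completeness of the graph enumeration together with the realizability of each $A_m'$ and $D_m'$ by an honest coideal — i.e.\ making sure that every connected norm-$\le 2$ graph that admits a fair and balanced $(-2)$-cost actually occurs on Tomatsu's list of coideals, with no "exotic" non-coideal example sneaking in. This is precisely where the subtlety flagged in the footnote about~\cite[Theorem~7.1]{Tom1} lives, so the argument must rely on the corrected statement from~\cite{Tom2}; once that correspondence (graphs of norm $\le 2$ with an $(-2)$-cost $\leftrightarrow$ coideals of $C(\SU_{-1}(2))$) is in hand, the proposition follows immediately from Theorem~\ref{TheoF3}.
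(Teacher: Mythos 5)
Your proposal follows essentially the same route as the paper: the paper's proof simply declares the argument ``completely analogous'' to that of Proposition~\ref{PropW2} (norm bound via Proposition~\ref{PropCostVSGrphNorm}, exclusion of trees of norm $<2$, uniqueness of the cost via Corollary~\ref{CorCostEqGrphNorm}), notes that dropping the fixed-point-freeness of the involution at $q=-1$ adds exactly the types $A_m'$ and $D_m'$, and then cites Tomatsu's corrected classification (or Section~\ref{SecEqMor}) to realize every such graph by a coideal. The concerns you flag --- completeness of the enumeration and realizability of each graph by an actual coideal --- are precisely the points the paper also delegates to the classical norm-$\le 2$ classification and to \cite{Tom1,Tom2}, so no genuinely different idea is involved.
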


\begin{proof}
The proof is completely analogous to that of the previous proposition.  This time, there is no condition on the involution, hence the graphs of types $A_m'$ ($m = 2, 3, \ldots, \infty$) and $D_m'$ ($m = 3, 4, \ldots$) are also allowed.  Again, any of these graphs are known to correspond to a coideal (see~\cite{Tom1,Tom2} or Section~\ref{SecEqMor} of this paper).
\end{proof}

Further expanding on this approach, we can go a little further and also list explicitly the quantum homogeneous spaces over $\SU_q(2)$ when $\absv{q}$ is very close to $1$.

\begin{Theorem}\label{TheoBound}
There exists a real number $0 < q_0 < 1$ such that for any $q_0 < q < 1$, all quantum homogeneous spaces over $\SU_q(2)$ arise as coideals, and are thus isomorphic to either a single point, the quantum real projective plane $\R P^2_q$, one of the Podle\'{s} spheres, or $\Zz_n \backslash \SU_q(2)$ for some $n\in \N_{>0}$.

For the same constant $q_0$, for any $-1 < q < -q_0$, again all quantum homogeneous spaces over $\SU_q(2)$ arise as coideals.  Other than the ones listed above, we have the possibility of type $A_\infty'$.
\end{Theorem}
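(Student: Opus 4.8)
The plan is to leverage the classification of Theorem~\ref{TheoF3} together with the norm estimate of Proposition~\ref{PropCostVSGrphNorm} and the rigidity statement of Corollary~\ref{CorCostEqGrphNorm}. By Theorem~\ref{TheoF3}, a quantum homogeneous space over $\SU_q(2)$ is encoded by a connected fair and balanced $(q+q^{-1})$-graph $(\Gamma,\we)$; by Proposition~\ref{PropCostVSGrphNorm}, such a $\Gamma$ has $\norm{\Gamma}\le |q+q^{-1}|$. As $q\to 1^-$ we have $q+q^{-1}\to 2^+$, and as $q\to -1^+$ we have $q+q^{-1}\to -2^-$; so for $|q|$ close to $1$ the admissible graphs are exactly the connected symmetric graphs of norm $\le 2+\delta$ for a small $\delta=\delta(q)\to 0$. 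The first key step is therefore a combinatorial one: show that there is an $\varepsilon>0$ so that every connected symmetric graph $\Gamma$ with $\norm{\Gamma}\le 2+\varepsilon$ is either of norm exactly $2$ (so one of the extended ADE graphs, together with the ones special to $q<0$, namely the $A_m'$ and $D_m'$, and in particular the point-with-double-loop) or is a graph of norm $<2$, i.e.\ a finite ADE Dynkin diagram or $A_\infty$, $D_\infty$, $A_{\infty,\infty}$-type tree. This is where the `gap' in the possible norms of graphs is used: the set of norms of connected graphs has no accumulation point just above $2$ other than $2$ itself — this is the classical result on graphs of small norm (Hoffman, and the subfactor index rigidity of Jones), which gives $\norm{\Gamma}\in\{2\cos(\pi/n)\}\cup[2,\infty)$ for graphs with $\norm{\Gamma}\le 2$, and more relevantly isolates $2$ from above.

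The second step is to rule out the `subcritical' graphs. If $\norm{\Gamma}<2=|q+q^{-1}|_{q=1}$ but $\norm{\Gamma}\le |q+q^{-1}|$ for $q$ near $1$, such a $\Gamma$ would have to be a tree (the finite and affine ADE classification shows the only connected graphs of norm $\le 2$ that are not trees are the affine diagrams, which have norm exactly $2$). By Remark~\ref{RemPFArg}.(2), a fair and balanced cost on a tree must come from a Frobenius--Perron eigenvector at eigenvalue $\norm{\Gamma}$; but the source-cost condition forces this eigenvalue to equal $|q+q^{-1}|$, so $\norm{\Gamma}=|q+q^{-1}|>2$, contradicting $\norm{\Gamma}<2$. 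Hence no subcritical graph survives, and we are left precisely with the norm-$2$ graphs. I should take $q_0$ so that $|q+q^{-1}|<2\cos(\pi/n_0)^{-1}\cdot 2$ — more precisely, so that $|q+q^{-1}|$ is smaller than the smallest graph-norm exceeding $2$; call that threshold $2+\eta_0$, and choose $q_0$ with $|q_0+q_0^{-1}|<2+\eta_0$.

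The third step identifies which norm-$2$ graphs actually admit a fair and balanced $(q+q^{-1})$-cost for $q\ne\pm1$ and pins down the cost. For $0<q<1$ one has $q+q^{-1}>0$, so the definition requires an even number of loops at each vertex, and Corollary~\ref{CorCostEqGrphNorm} forces the cost to be the Frobenius--Perron one; running through the norm-$2$ graphs with the even-loop and fixed-point-free-involution constraints leaves exactly: the point with a double loop ($\to$ trivial action, i.e.\ a point; actually $\SU_q(2)$ itself is the full one, but the homogeneous space is a point), $A_{\infty,\infty}$ (Podle\'{s} spheres, Example~\ref{ExaPodlesSphGraphs}), $D_\infty^*$ (the quantum real projective plane $\R P^2_q$, Example~\ref{ExaQRProjSp}), $A_n^{(1)}$ ($\to \Zz_{n+1}\backslash\SU_q(2)$, Example~\ref{ExaCyclicSubgrp}), and $A_\infty$ ($\to$ also a coideal — $U(1)\backslash\SU_q(2)$, the classical Podle\'{s} sphere). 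The exotic $E_n^{(1)}$, $D_n^{(1)}$ are excluded for $q\ne\pm1$ as noted in the Examples. For $-1<q<-q_0$ one has $q+q^{-1}<0$, so there is no even-loop or fixed-point-free condition, and in addition to the above we pick up the graphs $A_m'$ and $D_m'$; since $\norm{A_m'}=\norm{D_m'}=2$, Corollary~\ref{CorCostEqGrphNorm} again forces the cost, but the discussion in Examples~\ref{Exaqminus1AmDn} and~\ref{ExaGraphAinfty'} shows that for generic $q<0$ only $A_\infty'$ is genuinely new (the others coincide with already-listed cases after accounting for the $2$-step constructions / degenerate weights), so "other than the ones listed above, we have the possibility of type $A_\infty'$." Finally, invoking the identification of each of these graphs with a coideal — the point, $\R P^2_q$, the Podle\'{s} spheres, $\Zz_n\backslash\SU_q(2)$, and for $q<0$ additionally type $A_\infty'$ from~\cite{Tom2} (see Section~\ref{SecEqMor}) — completes the proof.

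The main obstacle I expect is the first step, namely making precise and citing the correct `gap above $2$' statement: one needs that connected graphs of norm just above $2$ do not exist except at norm exactly $2$, with a uniform gap, and then that the complete list of connected symmetric graphs of norm exactly $2$ is the extended-ADE list plus the $q<0$ additions $A_m'$, $D_m'$ — this is classical (it is the list in~\cite{Was1} augmented by the non-simply-laced/loop cases permitted when the involution need not be fixed-point-free) but has to be quoted carefully, and one must be sure the bounded-degree hypothesis (automatic here since $\deg\Gamma\le (q+q^{-1})^2$ by the Remark after the definitions) is in force so that the Frobenius--Perron machinery of Proposition~\ref{PropExCost} and~\cite{Pru1} applies uniformly. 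The rest is bookkeeping across the Examples already worked out in Section~3.
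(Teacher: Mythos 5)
Your strategy is essentially the paper's: use the spectral gap above $2$ in the set of possible graph norms (the paper cites \cite[Theorem~I.1.2]{Good1}) to choose $q_0$, apply Proposition~\ref{PropCostVSGrphNorm} to force $\norm{\Gamma}=2$ once $\aqn{2}$ lies below the next admissible norm, kill the norm-$<2$ graphs by the finite-tree/Frobenius--Perron argument of Remark~\ref{RemPFArg}.(2), and then identify the surviving norm-$2$ graphs and their costs with the known coideals via the Examples of Section~3 and the equivariant-morphism machinery. That is the intended proof, and your flagged ``main obstacle'' (stating the gap correctly, including for infinite graphs) is exactly the point the paper addresses by taking unions of finite graphs.

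One step in your third paragraph is genuinely wrong as written: you invoke Corollary~\ref{CorCostEqGrphNorm} to conclude that the fair and balanced cost on a norm-$2$ graph ``is forced to be the Frobenius--Perron one.'' That corollary applies only to fair and balanced $(-\norm{\Gamma})$-costs, i.e.\ when $\absv{\twe}=\norm{\Gamma}$; here $\absv{\twe}=\aqn{2}=\absv{q}+\absv{q}^{-1}>2=\norm{\Gamma}$ for $\absv{q}<1$, so its hypothesis fails, and the conclusion is false in this regime: $A_{\infty,\infty}$ carries the whole continuum of costs $\we_{q,x}$ of Example~\ref{ExaPodlesSphGraphs} (the family of Podle\'{s} spheres), not a unique Frobenius--Perron one. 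The same objection applies to your use of the corollary for $A_m'$ and $D_m'$. The enumeration of admissible costs on each norm-$2$ graph for $\absv{q}<1$ must be done directly, as in Examples~\ref{ExaPodlesSphGraphs}--\ref{ExaGraphAinfty'}; since you also cite those Examples, the conclusion survives, but the appeal to Corollary~\ref{CorCostEqGrphNorm} should be removed. A smaller slip: $A_\infty$, $D_\infty$, $A_{\infty,\infty}$ have norm exactly $2$, not $<2$; the norm-$<2$ connected graphs are all finite, and it is precisely this finiteness that makes your tree exclusion in step two valid (for infinite trees a positive eigenvector may sit at an eigenvalue strictly above $\norm{\Gamma}$, which is how $A_\infty$ itself supports a $\qn{2}$-cost).
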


\begin{proof}
It is well known that there is a spectral gap between $2$ and the next number $t_0$ which can arise as the norm of an adjacency matrix (\cite[Theorem I.1.2.]{Good1}, the case of infinite graphs being easily incorporated as unions of finite graphs).  Then, with $q_0 = t_0/2$, one has $\aqn{2} < t_0$ whenever $q_0 < \absv{q} < 1$.  By Proposition~\ref{PropCostVSGrphNorm}, any graph associated with an ergodic action of $\SU_q(2)$ for such $q$ must have the graph norm $2$, i.e. must be of extended ADE type.  Given any fair and balanced $\qn{2}$-cost on such a graph, one can choose an appropriate vertex and solve the equation for an equivariant homomorphism into $C(\SU_q(2))$, which gives a realization as an coideal.
\end{proof}

\section{Generators and relations}

We continue to write $u = u_{1/2}$ for the fundamental representation of $\SU_q(2)$.  The standard basis of $\Hsp_{1/2}$ is denoted by $(e_1, e_2)$. Let $(\Gamma,\we)$ be a connected fair and balanced $q+q^{-1}$-graph, with associated $q$-fundamental solution $(\Hsp,\Rmat)$ in $\Ef^J$ where $J= \Gamma^{(0)}$. Let $\mathcal{F}$ be the associated strict tensor functor from $\Rep(\SU_q(2))$ into $\cat{E}_f^J$. Let $\cat{D}^J$ be a $\Rep(\SU_q(2))$-module C$^*$-category associated with $\mathcal{F}$, which we may also assume to be strict. We may use $J$ to label a maximal set $\{x_v\mid v\in J\}$ of mutually non-isomorphic irreducible objects in $\cat{D}$. We recall that we can then identify $\mathcal{F}(u_a)_{v w}$ with $\Mor(x_v,u_a\otimes x_w)$ for $a\in \frac{1}{2}\mathbb{N}$, the latter having the natural Hilbert space structure $\langle f,g\rangle = f^*g$.

In \cite[Section~5]
{DCY2}, we associated with each $v$, $w$ a certain vector space \[\aA_v^w = \oplus_{a\in \frac{1}{2}\mathbb{N}} \Mor(u_a\otimes x_w,x_v)\otimes \mathscr{H}_a.\] These $\aA_v^w$ could be seen as pre-Hilbert C$^*$-equivalence bimodules between $^*$-algebras $\aA_{v}^v$ and $\aA_{w}^w$, equipped with compatible coactions. We recall that the completion of some $\aA_v^v$ is precisely the ergodic action of $\SU_q(2)$ associated with the vertex $v$. In general it is difficult to give a concise description by generators and relations of an individual algebra $\aA_v^v$ in terms of its associated graph (but see \cite{Pin4} for at least \emph{a} description). However, the `linking $^*$-algebra' $\aA=\oplus_{v,w} \mathscr{A}_{v}^w$ has a much nicer presentation. Since $\aA$ is Morita equivalent to any of the algebras $\aA_v^v$ (by means of the $\aA_{vv}$-$\aA$-equivalence bimodule $\oplus_w\mathscr{A}_{v w})$, it has the same $^*$-representation theory as any of the $\aA_{vv}$, and shares many properties  with them, such as the type of associated von Neumann algebras.

We fix an orthonormal basis of each (non-zero) $\mathscr{H}_{v w}=\mathcal{F}(u)_{v w}=\Mor(x_v,u\otimes x_w)$ once and for all, and denote their disjoint union over all $v,w$ as $(f_i)_{i\in L}$ for some index set $L$.  In order to specify which component $f_i$ is in, we define maps $s$ and $t$ from $L$ to $J$ by $(s(i),t(i))=(v,w)$ if $f_i \in \mathcal{F}_{v w}$.

\begin{Def}
For $i\in L$ and $j\in \{1,2\}$ with $(s(i),t(i))= (v,w)$, we denote by $z_{i j}$ the element \[ z_{i j} = f_i^* \otimes e_j \in \Mor(u\otimes x_w,x_v)\otimes \Hsp_u  \subseteq \aA_{v}^{w} \subseteq \aA.\]  The unit of $\aA_v^v$, considered as a projection in $\aA$, is denoted by $\delta_v$.
\end{Def}

The coaction of $\PW(\SU_q(2))$ on the various $\aA_v^w$ as in \cite[Definition~5.2]
{DCY2} combine to a global coaction $\alpha$ of $\PW(\SU_q(2))$ on $\aA$, by \cite[Proposition~5.8 
and Lemma~5.13]
{DCY2}. The formula for the coaction on the generators $z_{i j}$ can be immediately deduced from the definition of this coaction.

\begin{Lem}
The coaction $\alpha\colon \aA \rightarrow \aA \otimes \PW(\SU_q(2))$ is determined by the formulas
\begin{align*}
\alpha(\delta_v) &= \delta_v \otimes 1,&
\alpha(z_{i j}) &= \sum_{k=1}^2 z_{i k}\otimes u_{k j}.
\end{align*}
\end{Lem}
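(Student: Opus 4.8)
The plan is to compute the coaction directly from its definition in \cite[Section~5]{DCY2}, so the proof is essentially an unwinding of notation together with the identification of the generators $z_{ij}$ as elements of the form $f_i^*\otimes e_j$. First I would recall that the global coaction $\alpha$ is built by gluing the component coactions on each $\aA_v^w$, and that on $\aA_v^w = \oplus_{a}\Mor(u_a\otimes x_w, x_v)\otimes \Hsp_a$ the coaction sends $m\otimes \xi$ (with $m\in\Mor(u_a\otimes x_w,x_v)$ and $\xi\in\Hsp_a$) to an expression governed by the comodule structure $\delta$ on $\Hsp_a$: concretely $\delta(\xi) = \sum_\ell \xi_\ell\otimes (u_a)_{\ell,\cdot}$ in terms of a chosen basis, and $\alpha(m\otimes\xi) = \sum_\ell (m\otimes\xi_\ell)\otimes (\text{matrix coefficient of }u_a)$. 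The formula for $\alpha(\delta_v)$ is then immediate: $\delta_v$ is the unit of $\aA_v^v$, sitting in the $a=0$ summand $\Mor(u_o\otimes x_v,x_v)\otimes\Hsp_o\cong\C$, and the trivial comodule $\Hsp_o$ has $\delta(1) = 1\otimes 1$, giving $\alpha(\delta_v) = \delta_v\otimes 1$.

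For the generators, I would specialise to $a = 1/2$, where $\Hsp_u$ has orthonormal basis $(e_1,e_2)$ with comodule structure $\delta(e_j) = e_1\otimes u_{1j} + e_2\otimes u_{2j} = \sum_{k=1}^2 e_k\otimes u_{kj}$, exactly as recalled in the Preliminaries. Writing $z_{ij} = f_i^*\otimes e_j$ with $f_i^*\in\Mor(u\otimes x_w,x_v)$ and applying the component coaction, the factor $f_i^*$ is fixed (the coaction only acts on the $\Hsp_a$ tensorand and the $\PW(\SU_q(2))$ leg), so $\alpha(z_{ij}) = \sum_{k=1}^2 (f_i^*\otimes e_k)\otimes u_{kj} = \sum_{k=1}^2 z_{ik}\otimes u_{kj}$. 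One then checks that these two families of formulas indeed determine $\alpha$ on all of $\aA$: the elements $\delta_v$ and $z_{ij}$ generate $\aA$ as a $^*$-algebra (this is exactly the content of the `generators and relations' description being set up — the $z_{ij}$ together with the $\delta_v$ exhaust $\aA_v^w$ under products, since higher $\Mor(u_a\otimes x_w,x_v)$ are obtained from iterated tensor products of $u$), and a coaction is an algebra homomorphism, so its values on generators determine it.

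The only genuinely delicate point — and the step I would flag as the main obstacle — is verifying that the generator formula is consistent with the precise conventions of \cite[Definition~5.2 and Proposition~5.8]{DCY2}: namely that the component coactions really do combine into a single well-defined $\alpha$ on the linking algebra (invoking \cite[Proposition~5.8 and Lemma~5.13]{DCY2}), and that the multiplication in $\aA$ compatible with these formulas makes $\alpha$ a $^*$-algebra homomorphism, so that checking it on $\delta_v$ and $z_{ij}$ suffices. This is bookkeeping rather than a new idea, but it requires being careful about which leg of the tensor product carries the $\PW(\SU_q(2))$-coaction and about the ordering conventions for $\Mor(u\otimes x_w,x_v)$ versus $\Mor(x_v,u\otimes x_w)$; once those are pinned down the computation is the one-line identity above. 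I would close the proof by remarking that everything reduces to the comodule structure $\delta(e_j) = \sum_k e_k\otimes u_{kj}$ on the fundamental representation, which is the defining datum recalled at the start of Section~1.
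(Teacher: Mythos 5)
Your proposal is correct and follows exactly the route the paper intends: the paper gives no written proof for this lemma, stating only that the formulas are ``immediately deduced from the definition of the coaction'' in \cite[Definition~5.2]{DCY2}, and your unwinding --- the coaction acts as the identity on the morphism tensorand and via $\delta(e_j)=\sum_k e_k\otimes u_{kj}$ on the $\Hsp_{1/2}$ tensorand, with $\delta_v$ sitting in the trivial $a=0$ component --- is precisely that deduction. Your remark that the $z_{ij}$ and $\delta_v$ generate $\aA$ (so that the formulas genuinely \emph{determine} $\alpha$) anticipates the surjectivity argument the paper only supplies later, in the proof of the generators-and-relations theorem, and is the right way to justify the word ``determined''.
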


We next deduce certain relations among the elements $\delta_v$ and $z_{i j}$. First, the following equalities are obvious by construction.
\begin{equation}\label{Eq1}
\delta_v z_{i j} \delta_w = \delta_{v,s(i)}\delta_{w,t(i)}z_{i j}.
\end{equation}

\begin{Lem}\label{LemEqvAuFCorepOrth}
For any $w \in J$, one has the following relation inside $\aA$:
\begin{equation}\label{Eq2}
\sum_{i\in L, t(i)=w} z_{i j}^*  z_{i k} = \delta_{j, k}\delta_w.
\end{equation}
\end{Lem}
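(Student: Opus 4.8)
The plan is to unwind the definitions of the generators $z_{ij}$ and the product on $\aA$ inside the module category $\cat{D}^J$, and reduce the asserted relation to the orthogonality of the chosen basis $(f_i)$ of the Hilbert space $\mathscr{H}_{vw}=\Mor(x_v,u\otimes x_w)$ together with the fact that $u$ is a unitary corepresentation. Recall from \cite[Section~5]{DCY2} that the product of two homogeneous elements in $\aA$ is given, in terms of the morphism-space-times-Hilbert-space description $\aA_v^w=\oplus_a\Mor(u_a\otimes x_w,x_v)\otimes\mathscr{H}_a$, by composing morphisms in $\cat{D}$ (tensored up appropriately) and pairing the Hilbert space legs via the intertwiners $R_u,R_u^*$; the $^*$-operation on $z_{ij}=f_i^*\otimes e_j$ is, again by \cite{DCY2}, built from $f_i$, the conjugate of $e_j$ and the duality morphism $R_u$. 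So the first step is simply to write out $z_{ij}^*z_{ik}$ explicitly: it lives in the $u_0\oplus u_1$ graded pieces a priori, but I expect only the scalar ($u_0$) part to survive once we sum over $i$ with $t(i)=w$.

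The second step is the key computation. Summing $\sum_{i:t(i)=w} f_i f_i^*$ over the orthonormal basis of $\mathscr{H}_{vw}=\Mor(x_v,u\otimes x_w)$ gives, for each source vertex $v$, the orthogonal projection in $\End(u\otimes x_w)$ onto the isotypic component of $x_v$; summing further over all $v$ (which is exactly what ``$\sum_{i\in L,\,t(i)=w}$'' does, since $s(i)$ ranges over all of $J$) yields $\id_{u\otimes x_w}$, because $\{x_v\}$ is a complete set of irreducibles in $\cat{D}$ and $u\otimes x_w$ decomposes into them. Thus the morphism-category part of $\sum_i z_{ij}^*z_{ik}$ collapses to $\id_{x_w}$ tensored with the pairing $\langle e_j,\,\cdot\,\rangle$-type contraction of the two Hilbert space legs $e_j$ and $e_k$ through the corepresentation structure of $u$. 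The unitarity relation \eqref{EqSUq2DefGenRel2} — equivalently, that the comodule structure $\delta(e_i)=\sum_k e_k\otimes u_{ki}$ on $\Hsp_{1/2}$ is unitary — then forces this contraction to be $\delta_{j,k}$, and what remains is precisely $\delta_w=\id_{x_w}$ regarded as a projection in $\aA$.

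The main obstacle is bookkeeping rather than conceptual: one must be careful about the precise normalisations in the definition of the product and of the $^*$-structure on $\aA$ in \cite{DCY2} (there are factors coming from $R_u^*R_u=\aqn{2}$ and from $(R_u^*\otimes1)(1\otimes R_u)=-\sgn(q)\id$), and make sure the contributions of the $u_1$-isotypic part of $u\otimes u$ genuinely cancel when summed over the basis — this is where completeness of $\{x_v\}$ enters and is the only step that is not purely formal. It may be cleanest to verify the identity first in the ``universal'' situation of Proposition~\ref{PropEq}, i.e.\ directly in $\Ef^J$ via the maps $\Rmat_{vw}$ and $\Jop_{vw}$, where $\sum_i z_{ij}^*z_{ik}$ becomes an explicit finite matrix sum, and then transport it to $\aA$ using that $\mathcal{F}$ is a tensor $\cC^*$-functor; alternatively, one simply cites the defining properties of the product and coaction from \cite[Section~5]{DCY2} and performs the contraction. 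Either way, once the combinatorics of the summation over $i$ is pinned down, the relation \eqref{Eq2} drops out from unitarity of $u$.
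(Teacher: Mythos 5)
Your plan is correct and follows essentially the same route as the paper's proof: write out $z_{ij}^*$ and the product from the definitions in \cite[Section~5]{DCY2}, use completeness of the orthonormal basis to get $\sum_{i,t(i)=w} f_i f_i^* = \id_{u\otimes x_w}$, observe that only the trivial ($c=0$) component survives, and conclude via the contraction $(e_j^*\otimes R_u^*)(\bar{R}_u(1)\otimes e_k) = \langle e_j,e_k\rangle$ coming from the duality/unitarity of $u$. The only cosmetic difference is that the paper performs this last contraction directly with the conjugate equations rather than invoking \eqref{EqSUq2DefGenRel2}, but these amount to the same thing.
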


Note that the sum on the left hand side is finite, as $\Hsp_{v w}$ is non-zero for only a finite number of $v$ when $w$ is fixed.

\begin{proof} By definition of the $^*$-operation \cite[Definition~5.10]
{DCY2}, we have that $z_{ij}^*$ has only non-zero component at $a=1/2$, where it equals the element $\lbrack (R_u^*\otimes \id_{t(i)})(\id_u\otimes f_i)\rbrack \otimes (e_j^*\otimes \id_u)(\bar{R}_u(1))$. By definition of the multiplication \cite[Definition~5.4]
{DCY2}, we have then that $\sum_{i, t(i)=w} z_{ij}^*z_{ij}$ corresponds to the element \[ \sum_{i,t(i)=w} \lbrack (R_u^*\otimes \id_{w})(\id_u\otimes f_if_i^*)((((e_j^*\otimes \id_u)\bar{R}_u(1))\otimes e_j)^c\otimes \id_w)\rbrack \otimes (((e_j^*\otimes \id_u)\bar{R}_u(1))\otimes e_j)_c\] (see also the proof of~\cite[Theorem~5.16]
{DCY2}). But since the $f_i$ are orthonormal, $\sum_{i,t(i)=w} f_i f_i^*$ is equal to $\id \in \End(u \otimes x_w)$. Then, by looking at the morphism part, we see that the remaining expression only has a non-zero component at $c=0$. Thus the above can be simplified to $\id_w \otimes (e_j^*\otimes R^*) (\bar{R}(1)\otimes e_k) = \id_w \otimes \langle e_j, e_k \rangle$, proving the assertion.
\end{proof}

\begin{Lem}\label{LemEqvCopiesHuOrth}
For any $i, k \in L$, one has
\begin{equation}\label{Eq2'}
z_{i 1} z_{k 1}^* + z_{i 2} z_{k 2}^* = \delta_{i,k} \delta_{s(i)}.
\end{equation}
\end{Lem}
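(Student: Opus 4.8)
The plan is to prove the identity $z_{i1}z_{k1}^* + z_{i2}z_{k2}^* = \delta_{i,k}\delta_{s(i)}$ by unwinding the definitions of the multiplication and the $^*$-operation in $\aA$ from~\cite[Section~5]{DCY2}, exactly parallel to the proof of Lemma~\ref{LemEqvAuFCorepOrth}. First I would observe that, as in the previous lemma, $z_{k j}^*$ lives in the $a = 1/2$ component, where it equals $\lbrack (R_u^* \otimes \id_{t(k)})(\id_u \otimes f_k)\rbrack \otimes (e_j^* \otimes \id_u)(\bar{R}_u(1))$, while $z_{i j}$ is the element $f_i^* \otimes e_j$ concentrated in the $a = 1/2$ component. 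So the products $z_{i j} z_{k j}^*$ are sums over the decomposition of $u \Circt u = u_0 \oplus u_1$, and I would use the multiplication formula of~\cite[Definition~5.4]{DCY2} to write $z_{i1}z_{k1}^* + z_{i2}z_{k2}^*$ explicitly. The crucial simplification is that summing over $j \in \{1,2\}$ of the Hilbert-space tensor factors $e_j \otimes ((e_j^* \otimes \id_u)\bar{R}_u(1))$, together with the morphism part involving $R_u$, $R_u^*$, collapses onto the $u_0$-isotypic component, contributing $\langle e_j, e_j\rangle$-type terms, and the morphism part reduces using $(R_u^* \otimes \id)(\id \otimes R_u) = -\sgn(q)\id$ from~\eqref{EqQFundEqForHalfSpin}.

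The key steps, in order, are: (i) write both $z_{ij}$ and $z_{kj}^*$ in their component forms, noting $z_{ij}$ is in the $a = 1/2$ part and $z_{kj}^*$ in the $a = 1/2$ part; (ii) apply the multiplication formula to get $z_{ij}z_{kj}^*$ as an element whose nonzero components lie in the summands of $u \Circt u$; (iii) sum over $j = 1,2$, using that $\sum_j (e_j^* \otimes \id)\bar{R}_u(1)$ paired with $e_j$ reassembles (up to scalar) $\bar{R}_u(1) = R_u(1)$ up to the appropriate sign, so the Hilbert-space part forces the relevant component to be $a = 0$; (iv) on the morphism part, the remaining composition is $(R_u^* \otimes \id_{s(i)})(\id_u \otimes f_i f_k^*) (\cdots)$ which, after the $a=0$ projection, simplifies to a multiple of $f_i^* \cdots f_k$, i.e.\ $\langle f_i, f_k\rangle = \delta_{i,k}$ since the $f_i$ are orthonormal (noting $f_i, f_k$ can only pair nontrivially when $s(i) = s(k)$ and $t(i) = t(k)$); (v) collect scalars — here I would carefully track the factor coming from $R_u^* R_u = \aqn{2}$ versus $(R_u^* \otimes \id)(\id \otimes R_u) = -\sgn(q)\id$, since these are the two ways $R_u$ and $R_u^*$ can contract, and check that the normalization works out to exactly $\delta_{s(i)}$ (the unit of $\aA_{s(i)}^{s(i)}$) with coefficient $1$.

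The main obstacle I anticipate is bookkeeping of the $q$-dependent scalar factors and the precise form of the pairing $\bar R_u$ versus $R_u$: the previous lemma's proof used $\sum_i f_i f_i^* = \id$ to contract an \emph{inner} pair of $f$'s, whereas here I need to contract over the \emph{index} $j \in \{1,2\}$ of $\Hsp_u$ and over an \emph{outer} leg, so the roles of the two equations in~\eqref{EqQFundEqForHalfSpin} are swapped and I must verify that the sign $-\sgn(q)$ and the quantum dimension $\aqn{2}$ combine correctly rather than leaving a stray sign or a factor of $\aqn{2}^{\pm 1}$. A clean way to organize this, which I would adopt, is to recognize~\eqref{Eq2'} as the statement that the matrix $(z_{ij})$ with rows indexed by $L$ (restricted to a fixed source $v$) and columns by $\{1,2\}$ gives, together with~\eqref{Eq2}, a ``corepresentation-orthogonality'' pair: \eqref{Eq2} says the columns are orthonormal within each block $t(i) = w$, and \eqref{Eq2'} is the dual statement that the rows (grouped by $s(i) = v$) behave like an isometry the other way; this duality is precisely governed by the fact that $u$ is its own conjugate via $R_u$ satisfying~\eqref{EqQFundEqForHalfSpin}, so the computation is forced and the only real work is the scalar check.
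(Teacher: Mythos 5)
Your proposal is correct in outline, but it takes a genuinely different (more computational) route than the paper. The paper's proof does \emph{not} expand the product via the multiplication formula: it first observes that applying $\alpha$ to $\sum_j z_{ij}z_{kj}^*$ and using unitarity of $u$ shows the left-hand side is a coaction-invariant element of $\aA_{s(i)}^{s(k)}$, hence (by ergodicity/irreducibility of the objects $x_v$) it is $0$ unless $s(i)=s(k)$ and otherwise a scalar multiple of $\delta_{s(i)}$; only then is the scalar computed by applying the invariant expectation $E_{\G}$, which yields $\frac{1}{\dim_q(u)}\sum_j \langle \bar{R}_u(1), e_j\otimes ((e_j^*\otimes\id_u)\bar{R}_u(1))\rangle\, f_i^*f_k = \frac{\aqn{2}}{\dim_q(u)}\delta_{i,k} = \delta_{i,k}$. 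Your direct expansion buys a self-contained verification parallel to Lemma~\ref{LemEqvAuFCorepOrth}, at the cost of the scalar bookkeeping you rightly flag; the paper's invariance trick buys brevity and automatically kills the $a=1$ components. Your plan does go through: after summing over $j$ the Hilbert-space leg reassembles to $\bar{R}_u(1)$, which lies entirely in the $u_0$-isotypic part of $u\Circt u$, and the morphism part collapses via the \emph{second} conjugate equation $(\id_u\otimes R_u^*)(\bar{R}_u\otimes\id_u)=\id_u$ (not the contraction $R_u^*R_u=\aqn{2}$, whose factor is cancelled by the $1/\dim_q(u)$ normalisation of the $a=0$ component) to $f_i^*f_k=\delta_{i,k}\id_{x_{s(i)}}$. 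One small slip: in your step (iv) the two intertwiners contract as an \emph{outer} pair $f_i^*(\cdots)f_k$, not as $f_i f_k^*$ sandwiched inside, though your conclusion $\langle f_i,f_k\rangle=\delta_{i,k}$ is the right one.
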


\begin{proof}
Applying the coaction to the above formula, the unitarity of $u$ implies that the left hand side represents an invariant element. Since it lies in $\aA_{s(i)}^{s(i)}$, it must be a scalar multiple of $\id_{s(i)}$. Applying $E_{\G}$ to it (cf.~\cite[Lemma~5.15]
{DCY2}) and using the conjugate equations, we obtain that this scalar must be $\sum_j \frac{1}{\dim_q(u)}\langle \bar{R}_u(1),e_j\otimes ((e_j^*\otimes \id_u)\bar{R}_u(1))\rangle f_i^*f_k$, which by the orthogonality of the $f_i$ reduces to $\delta_{i,k}$.
\end{proof}

The final relation expresses the adjoints $z_{i j}^*$ as a linear combination of elements of the form $z_{k l}$. It is here that the weight $\we$ on the graph will be used. Recall from \eqref{EqRandJ} the anti-linear maps $\Jop_{v w}$ associated with our $q$-fundamental solution $(\Hsp,\Rmat)$. Consider the matrix $E^{(v w)}_{i j} = -\sgn(q) \langle \Jop_{v w} f_j, f_i \rangle$, where $s(j) = t(i) = v$ and $t(j)=s(i)=w$. Then the identity $\Jop_{v w} \Jop_{w v} = -\sgn(q)$ can be written as
\[
\sum_{\substack{k \in L \\(s(k),t(k))=(v,w)}}\overline{E_{i k}^{(v w)}} E_{kj}^{(w v)}  = -\sgn(q) \delta_{i, j}.
\]

Recall also the matrix $F$ introduced after Definition \ref{DefSUq2}.

\begin{Lem}\label{LemFormStar}
For any $i \in L$ and $j \in \{1,2\}$ with $(s(i),t(i))=(v,w)$, we have \begin{equation}
\label{Eq3} z_{i j}^* =\sum_{\substack{k \in L \\(s(k),t(k))=(w,v)}} E^{(w,v)}_{i k} (F_{1 j}z_{k 1} + F_{2 j}z_{k 2}),
\end{equation}
or more succinctly
\[
\overline{z^{(v,w)}} = E^{(w,v)} z^{(w,v)} F,
\]
where $z^{(v,w)}$ is the matrix of $z_{i j}$ with $(s(i),t(i))= (v,w)$.
\end{Lem}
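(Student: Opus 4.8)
The plan is to unwind the definitions of $z_{ij}^*$ and $z_{kl}$ inside the linking algebra $\aA$ and compare the two expressions as elements of $\Mor(u\otimes x_v, x_w)\otimes\Hsp_u$, componentwise at $a=1/2$ (all other components vanish, as in the proof of Lemma~\ref{LemEqvAuFCorepOrth}). First I would recall from \cite[Definition~5.10]{DCY2} that $z_{ij}^*=f_i^*\otimes e_j$ dualizes to the element supported at $a=1/2$ given by $[(R_u^*\otimes\id_{t(i)})(\id_u\otimes f_i)]\otimes(e_j^*\otimes\id_u)\bar R_u(1)$; here $(s(i),t(i))=(v,w)$, so the morphism part lives in $\Mor(u\otimes x_v,x_w)$ and the Hilbert-space part in $\Hsp_u$. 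On the right-hand side, each $z_{k1},z_{k2}$ with $(s(k),t(k))=(w,v)$ is by definition $f_k^*\otimes e_1$, resp.\ $f_k^*\otimes e_2$, so $\sum_k E^{(w,v)}_{ik}(F_{1j}z_{k1}+F_{2j}z_{k2})=\sum_k E^{(w,v)}_{ik} f_k^*\otimes(F_{1j}e_1+F_{2j}e_2)=\sum_k E^{(w,v)}_{ik} f_k^*\otimes Fe_j$.

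The next step is to identify the Hilbert-space components. Using the relation $Fe_j=(e_j^*\otimes\id)R_u$ recorded right after \eqref{EqQFundEqForHalfSpin}, and the fact that $\bar R_u=-\sgn(q)R_u$ together with the defining equations, I would check that $(e_j^*\otimes\id_u)\bar R_u(1)$ and $Fe_j$ agree up to the sign $-\sgn(q)$ already built into the matrix $E^{(w,v)}_{ik}=-\sgn(q)\langle\Jop_{wv}f_k,f_i\rangle$; more precisely, $(e_j^*\otimes \id_u)\bar R_u(1) = -\sgn(q)(e_j^*\otimes \id_u)R_u(1) = -\sgn(q)Fe_j$, so the $-\sgn(q)$ factors on the two sides cancel correctly. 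So the claim reduces to the morphism identity
\[
(R_u^*\otimes\id_{v})(\id_u\otimes f_i)=\sum_{\substack{k\in L\\(s(k),t(k))=(w,v)}}\langle\Jop_{wv}f_k,f_i\rangle\, f_k^*
\]
in $\Mor(u\otimes x_v,x_w)$, where I have absorbed the signs. This is where the weighted-graph data enters: the operators $\Jop_{wv}$ are built from the $q$-fundamental solution $\Rmat$, and \eqref{EqRandJ} gives $\Jop_{wv}f_k=(f_k^*\otimes\id)\Rmat_{wv}(1)$.

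To prove this morphism identity I would expand the left side using $\mathcal{F}(R_u)=\Rmat=\oplus\Rmat_{vw}$ and the fact that $(f_i)_{i:\,(s,t)=(w,v)}$ is an orthonormal basis of $\mathscr{H}_{wv}=\mathcal{F}(u)_{wv}$: insert $\id=\sum_k f_kf_k^*$ on $u\otimes x_v$ in the appropriate spot, so that $(R_u^*\otimes\id_v)(\id_u\otimes f_i)=\sum_k \big[(R_u^*\otimes\id_v)(\id_u\otimes f_i)f_k\big]f_k^*$, and recognize the scalar $(R_u^*\otimes\id_v)(\id_u\otimes f_i)f_k\in\Mor(x_v,x_v)=\C\id_v$ as exactly $\langle \Jop_{wv}f_k, f_i\rangle$ by \eqref{EqRandJ} and the definition of the adjoint — this is the same bookkeeping as in Proposition~\ref{PropJCondi}, just read in the module category rather than in $\Ef^J$. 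The main obstacle is keeping the numerous identifications straight: the translation between $\mathcal{F}(u)_{vw}$ and $\Mor(x_v,u\otimes x_w)$, the placement of the various $\id$'s and the flip in $R_u$ versus $\bar R_u$, and the precise form of the $^*$ and the multiplication in $\aA$ from \cite{DCY2}; none of this is conceptually deep, but a sign or a transpose error in matching $E^{(w,v)}$ against $\Jop_{wv}$ would be easy to make, so I would verify the $1\times1$ (single-vertex, single-edge) case against the known Podle\'s-sphere relations as a sanity check before asserting the general formula, and then note that the succinct matrix form $\overline{z^{(v,w)}}=E^{(w,v)}z^{(w,v)}F$ is just the componentwise statement repackaged.
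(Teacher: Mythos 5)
Your proposal is correct and follows essentially the same route as the paper: expand the morphism part of $z_{ij}^*$ using $\mathcal{F}(R_u)=\Rmat$ and the identification $\Mor(x_w,u\otimes u\otimes x_w)\cong\oplus_{v'}\Hsp_{wv'}\otimes\Hsp_{v'w}$ to produce the coefficients $\langle\Jop_{wv}f_k,f_i\rangle$, and match $(e_j^*\otimes\id_u)\bar R_u(1)=-\sgn(q)Fe_j$ against the $-\sgn(q)$ absorbed into $E^{(wv)}$. The only blemishes are subscript typos ($\id_v$ and $\Mor(x_v,x_v)$ should be $\id_w$ and $\Mor(x_w,x_w)$ in the displayed morphism identity), which do not affect the argument.
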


\begin{proof} As in the proof of Lemma \ref{LemEqvAuFCorepOrth}, we have that $z_{ij}^*$ has only a non-zero component at $a=1/2$, where it equals the element $\lbrack (R_u^*\otimes \id_{w})(\id_u\otimes f_i)\rbrack \otimes (e_j^*\otimes \id_u)(\bar{R}_u(1))$. Now $(R_u\otimes \id_w) \in \Mor(x_w,u\otimes u\otimes x_w)$, and the latter space can be identified with $\mathcal{F}(u\otimes u)_{ww} = \oplus_v \mathcal{F}(u)_{w v}\otimes \mathcal{F}(u)_{vw}$, by strict tensoriality of $\mathcal{F}$. Under this correspondence, the element $(R_u\otimes \id_w)$ coincides, by construction, with the element $\oplus_v \mathcal{R}_{w v}(1)$. But the latter can be written \[\oplus_v \mathcal{R}_{w v}(1)=\sum_{v,k,l} \langle f_l,\Jop_{w v}f_k\rangle f_k\otimes f_l.\] Identifying again with $\Mor(x_w,u\otimes u\otimes x_w)$, we obtain the equality \[ R_u\otimes \id_w = \sum_{v,k,l} \langle f_l,\Jop_{w v}f_k\rangle (\id_u\otimes f_l)f_k.\] Plugging this into the expression for $z_{ij}^*$ above, and using orthogonality of the $f_i$, we obtain \[z_{ij}^* = \sum_k \langle \Jop_{w v}f_k,f_i\rangle f_k^* \otimes (e_j^*\otimes \id_u)\bar{R}_u(1).\] Since $-\sgn(q)(e_j^*\otimes \id_u)\bar{R}_u(1) = F_{1j}e_1+F_{2j}e_2$, this is precisely the formula for $z_{ij}^*$ as stated in the lemma.
\end{proof}

We now aim to show that these relations are the \emph{universal} ones by which one can define $\aA$. We write $C_c(J)$ for the algebra of finitely supported functions on $J$, and we denote $\delta_v \in C_c(J)$ for the Dirac function at $v \in J$. We identify $C_c(J)$ with its copy inside $\aA$.

\begin{Theorem}
Let $\mathcal{A}$ be the (not necessarily unital) $^*$-algebra generated by a copy $C_c(J)$ and elements $Z_{i j}$ for $i\in L$, $j\in \{1,2\}$ satisfying the relations \eqref{Eq1}, \eqref{Eq2}, \eqref{Eq2'}, and \eqref{Eq3} in place of $z_{i j}$. Then the natural $^*$-homomorphism
\[
\pi\colon \mathcal{A} \rightarrow \aA, \quad
\begin{cases}
\delta_v \mapsto \delta_v,\\
Z_{i j} \mapsto z_{i j},
\end{cases}
\]
is an isomorphism.
\end{Theorem}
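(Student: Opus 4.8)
The plan is to exhibit an explicit inverse to $\pi$, or rather to show that the abstract algebra $\mathcal{A}$ cannot be larger than $\aA$. Surjectivity of $\pi$ is essentially built in: the $z_{ij}$ together with the $\delta_v$ generate $\aA$ as a $^*$-algebra, since the whole category $\Rep(\SU_q(2))$ is generated by $u$ and every morphism space $\Mor(u_a\otimes x_w, x_v)$ can be written using iterated tensor products of $u$ together with $R_u, R_u^*$, whose images are expressible in the $z_{ij}$ and their adjoints via the relations above; this is exactly the content of \cite[Section~5]{DCY2} unwound. So the real work is injectivity, i.e.\ showing that the relations \eqref{Eq1}--\eqref{Eq3} already force $\mathcal{A}$ to be ``no bigger than'' $\aA$.

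The key device I would use is the coaction. The relations are equivariant: one checks that setting $\alpha(\delta_v)=\delta_v\otimes 1$ and $\alpha(Z_{ij})=\sum_k Z_{ik}\otimes u_{kj}$ is compatible with \eqref{Eq1}--\eqref{Eq3} (for \eqref{Eq3} this is precisely why the matrix $F$, which implements $\bar u = F^{-1}uF$, appears), so $\mathcal{A}$ carries a $\PW(\SU_q(2))$-coaction $\tilde\alpha$ making $\pi$ equivariant. Decomposing $\mathcal{A}$ into spectral subspaces $\mathcal{A}(a)$ indexed by $a\in\frac12\N$, it suffices to show $\pi$ is injective on each $\mathcal{A}(a)$, and in fact it suffices to bound $\dim \delta_v \mathcal{A}(a)\delta_w$ from above by $\dim \delta_v\aA(a)\delta_w = \dim(\Mor(u_a\otimes x_w,x_v)\otimes \Hsp_a) = (2a+1)\dim\Mor(u_a\otimes x_w, x_v)$. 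The strategy is a normal-form / spanning argument: using \eqref{Eq1} to track sources and targets, \eqref{Eq3} to eliminate all occurrences of $Z_{ij}^*$ in favour of the $Z_{kl}$, and \eqref{Eq2}, \eqref{Eq2'} to collapse products, one shows every element of $\delta_v\mathcal{A}\delta_w$ is a linear combination of ``reduced words'' $Z_{i_1 j_1}\cdots Z_{i_n j_n}$ whose associated path $i_1\cdots i_n$ runs from $v$ to $w$; then one argues that modulo the relations the span of such words of a fixed spectral degree $a$ has dimension at most that of $\Mor(u_a\otimes x_w,x_v)\otimes\Hsp_a$, for instance by constructing a surjective linear map from this morphism space onto $\delta_v\mathcal{A}(a)\delta_w$ using the Temperley--Lieb description of $\Mor(u^{\Circt n}, x_v\bar x_w\text{-type objects})$ and the universal property of Theorem~\ref{ThmGen}.

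A cleaner way to package the dimension count, and the one I would actually pursue, is to build the inverse functorially: show that the assignment sending a morphism $g\in\Mor(u_a\otimes x_w, x_v)$ and a vector $\xi\in\Hsp_a$ to an element of $\mathcal{A}$ (defined by writing $g$ in terms of the $f_i$'s and $R_u$'s and replacing each $f_i^*\otimes e_j$ by $Z_{ij}$) is well defined, independent of the chosen expression --- this is where relations \eqref{Eq2} and \eqref{Eq2'} (orthonormality of the $f_i$) and \eqref{Eq3} (the $R_u$ relation) are each consumed exactly once --- and linear and surjective onto $\delta_v\mathcal{A}\delta_w$. Composing with $\pi$ gives the identity on the model of $\aA$ from \cite[Section~5]{DCY2}, forcing $\pi$ to be injective. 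The main obstacle is precisely this well-definedness check: one must verify that the relations imposed on $\mathcal{A}$ are \emph{exactly} the defining relations of the morphism spaces in the Temperley--Lieb / $\Rep(\SU_q(2))$ picture, with no relation missing. Here the universal property in Theorem~\ref{ThmGen} does the heavy lifting --- it guarantees that the only relations among iterated tensor products of $u$ are consequences of the $q$-fundamental solution equations \eqref{EqQFundEqForHalfSpin} --- but translating that categorical statement into the purely algebraic assertion about $\mathcal{A}$, and bookkeeping the interaction of the multiplication on $\aA$ (which involves the $(\ \cdot\ )^c$, $(\ \cdot\ )_c$ operations from \cite{DCY2}) with the word reductions, is the delicate part. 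I expect the argument to mirror closely the proof of \cite[Theorem~5.16]{DCY2}, now read ``in reverse''.
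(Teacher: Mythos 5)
Your surjectivity argument and your observation that the relations are compatible with a coaction $\alpha(Z_{ij})=\sum_k Z_{ik}\otimes u_{kj}$ making $\pi$ equivariant both match the paper. But for injectivity you offer two routes (a dimension count on spectral subspaces, or an inverse built from the morphism spaces) and in both cases you explicitly defer the decisive step: you never verify that the words $Z_{i_1 j_1}\cdots Z_{i_n j_n}$ collapse under \eqref{Eq2}, \eqref{Eq2'}, \eqref{Eq3} to a space of dimension at most $\sum_a (2a+1)\dim\Mor(u_a\otimes x_w,x_v)$, nor that the proposed assignment sending $g\otimes\xi$ to a word in the $Z_{ij}$ is independent of the chosen Temperley--Lieb expression for $g$. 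This is precisely where all the content of the theorem lives --- a priori $\mathcal{A}$ could be strictly larger than $\aA$ because some relation valid in the morphism spaces might fail to follow from \eqref{Eq1}--\eqref{Eq3} --- and saying that you ``expect the argument to mirror the proof of \cite[Theorem~5.16]{DCY2} read in reverse'' does not close it. As written, the proposal is a plan rather than a proof.

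The paper avoids this combinatorial bookkeeping altogether by a Galois-object (cotensor) trick. It introduces the opposite algebra $\mathcal{B}$ with generators $Y_{ij}$ satisfying the transposed relations, a unital $^*$-homomorphism $\beta\colon \PW(\SU_q(2))\rightarrow \prod_w\bigoplus_v \mathcal{B}_{vw}\otimes\mathcal{A}_{vw}$ with $u_{ij}\mapsto \sum_k Y_{ik}\otimes Z_{kj}$ (well defined exactly because of the relations \eqref{Eq2}, \eqref{Eq2'}, \eqref{Eq3}), and an anti-isomorphism $S\colon\mathcal{B}\rightarrow\mathcal{A}$, $Y_{ij}\mapsto Z_{ji}^*$. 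The inverse of $\pi$ is then given by the explicit formula $\phi_{vw}(x)=E_{\G}\bigl(x_{(0)}\,\pi(S_{vw}(x_{(1)[1vw]}))\bigr)\,x_{(1)[2vw]}$, and the identity $\phi\circ\pi=\id$ reduces, by multiplicativity of $\alpha$ and $\beta$ and anti-multiplicativity of $S$, to a short check on the generators $Z_{ij}$, $Z_{ij}^*$, $\delta_v$. If you want to salvage your second route, you would have to actually carry out the well-definedness verification (equivalently, the spanning and dimension bound), which amounts to redoing the Temperley--Lieb recoupling theory inside $\mathcal{A}$; the cotensor argument is the standard device for bypassing exactly that.
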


\begin{proof} Let us first prove surjectivity of $\pi$. That is, we want to show that the $z_{i j}$ generate $\aA$.  If this is not the case, there would be a nonzero morphism $f \in \Mor(u_a \otimes x_v, x_w)$, for some $a \in \half\N$ and $v, w \in J$, which is orthogonal to the morphisms in $\Mor(u^{\smCirct k} \otimes x_v, x_w)$ for any $k \in \N$.  Since any $u_a$ is contained in some $u^{\smCirct k}$, this is impossible.

To prove now the bijectivity of $\pi$, we construct an explicit formula for its inverse. To obtain it, we modify a technique which is very useful in the manipulation of Galois objects (cf.~\cite{Bic2}, see also the more direct proof in \cite{BDV1}).

We first remark that there exists a coaction of $\PW(\SU_q(2))$ on $\mathcal{A}$, which we will also denote as $\alpha$, uniquely determined by the formula \[\alpha(Z_{i j}) = \sum_{k=1}^2 Z_{i k} \otimes u_{k j}\] That $\alpha$ is well-defined follows from the form of this formula (for \eqref{Eq1}) and the fact that $(u_{i j})_{i, j}$ is unitary (for \eqref{Eq2} and \eqref{Eq2'}).  Moreover by~\eqref{Eq3}, $\alpha$ is a $^*$-homomorphism.  From the form of $\alpha$, it also satisfies the coaction property, and hence defines a coaction of $\PW(\SU_q(2))$. Note that $\pi$ is then an equivariant map. In the following, we will use the shorthand notation $\alpha(x) = x_{(0)} \otimes x_{(1)}$ both for $x\in \aA$ and $x\in \mathcal{A}$.

Denote now by $\mathcal{B}$ the (not necessarily unital) $^*$-algebra generated by a copy of $C_c(J)$ and elements $Y_{i j}$ for $i \in \{1,2\}$ and $j \in L$, subject to $\delta_r Y_{i j} \delta_s = \delta_{s(i),v}\delta_{t(i),w}Y_{i j}$ and the relations
\begin{align*} 
 \sum_{j=1}^2 Y_{i j}Y_{k j}^* &= \delta_{i,k}\delta_{s(i)},&&
 \sum_{\substack{j\in L\\t(j)=w}} Y_{j i}^*Y_{j k} = \delta_{i,k} \delta_{w},&&
 \overline{Y^{(v,w)}} = F^{-1} Y^{(w,v)} (E^{(w,v)})^{-1},
\end{align*}
which are simply the relations defining the opposite algebra of $\mathcal{A}$.

By the defining relations for $\PW(\SU_q(2))$, the formula
\[
\beta\colon \PW(\SU_q(2)) \rightarrow \prod_{w \in J} \bigoplus_{v\in J} \Bigl(\mathcal{B}_{v w} \otimes \mathcal{A}_{v w} \Bigr),\quad u_{i j}\mapsto \Bigl(\underset{t(k)=w}{\sum_{k\in L}} Y_{i k}\otimes Z_{k j}\Bigr)_{w}
\]
defines a unital $^*$-homomorphism, where we put $\mathcal{B}_{v w} = \delta_v \mathcal{B} \delta_w$, and where the codomain is considered inside a completed tensor product of the algebras $\mathcal{B}$ and $\mathcal{A}$, or alternatively as left multiplication operators on $\mathcal{B}\otimes \mathcal{A}$. Let us write the components of $\beta(x)$ as $\beta(x)_{vw} \in \mathcal{B}_{v w} \otimes \mathcal{A}_{v w}$. We will employ the shorthand notation $\beta(x)_{vw} = x_{[1,vw]}\otimes x_{[2,vw]}$.

As a final ingredient, define an anti-isomorphism
\[
S\colon \mathcal{B}\rightarrow \mathcal{A}, \quad \begin{cases}
\delta_v &\mapsto \delta_v,\\
Y_{i j} &\mapsto Z_{j i}^*,
\end{cases}
\]
which is well-defined by the symmetry of our relations. Then $S$ restricts to maps $S_{vw}\colon \mathcal{B}_{vw}\rightarrow \mathcal{A}_{wv}$.

Write $E_{\G}$ for the map $\aA\rightarrow C_c(J)$ sending $x$ to $(\id \otimes \varphi_{\G})\alpha(x)$. Define then \[\phi_{vw}\colon \aA_{vw}\rightarrow \mathcal{A}_{vw}, \qquad x \rightarrow E_{\G}(x_{(0)}\pi(S_{vw}(x_{(1)[1vw]})))x_{(1)[2vw]}.\] We claim that $\phi = \oplus_{v,w}\phi_{vw}$ is an inverse for $\pi$.

To see this, pick $y\in \mathcal{A}_{vw}$. Then $\phi_{vw}(\pi(y)) = E_{\G}(\pi(y_{(0)}S_{vw}(y_{(1)[1vw]})))y_{(1)[2vw]}$ by equivariance of $\pi$. We argue that in fact \begin{equation}\label{eqnGalCo} y_{(0)}S_{vw}(y_{(1)[1vw]})\otimes y_{(1)[2vw]} = \delta_{v}\otimes y.\end{equation} Indeed, using the multiplicativity of $\alpha$ and $\beta$ and the anti-multiplicativity of $S$, we have that equation \eqref{eqnGalCo} holds for a product if it holds for the separate factors. Hence \eqref{eqnGalCo} only has to be verified on the generators $Z_{ij}$, $Z_{ij}^*$ and $\delta_i$, which follows from an easy computation using the defining relations. It follows that $\phi(\pi(y)) = y$, and that $\phi$ is the inverse of $\pi$.

\end{proof}

\begin{Exa}\label{ExGenPod} Consider the weighted graph $(A_{\infty,\infty},\we_{q,x})$ as in Example \ref{ExaPodlesSphGraphs}. In this case, the generators $z$ can be labeled as $z_i^{(m,m\pm 1)}$, since $\Hsp_{mn}$ is zero or one-dimensional, the latter case only arising if $|m-n|=1$.

From \eqref{LemEqvCopiesHuOrth}, we get that \begin{equation}\label{EqUnita} z_1^{(1 0)}z_1^{(1 0)*}+ z_2^{(1 0)}z_2^{(1 0)*} = 1 = z_1^{(0 1)}z_1^{(0 1)*}+ z_2^{(0 1)}z_2^{(0 1)*}.\end{equation} Now we may choose $E^{(1 0)} = \we_{q,x}(1\rightarrow 0)^{1/2}$ (and then $E^{(0 1)} = -\sgn(q)\we_{q,x}(1\rightarrow 0)^{-1/2}$), so by Lemma \ref{LemFormStar} we know
\begin{equation}\label{EqAdjo}
\begin{cases}
z_1^{(0 1)*} = -\sgn(q)|q|^{-1/2}\we_{q,x}(1\rightarrow 0)^{1/2}z_2^{(1 0)},  \\
z_2^{(0 1)^*} = |q|^{1/2}\we_{q,x}(1\rightarrow 0)^{1/2}z_1^{(1 0)}.\end{cases}
\end{equation}
Hence, abbreviating $z_i = z^{(1 0)}_i$ and $\we_{q,x}(1\rightarrow 0) = \we_{x}$, we can use \eqref{EqAdjo} to rewrite \eqref{EqUnita} as
\begin{equation}\label{EqSimpl}
\begin{cases}
z_1z_1^*+ z_2z_2^* = 1\\
|q|z_1^*z_1+ |q|^{-1}z_2^*z_2 = \we_{x}^{-1}.
\end{cases}
\end{equation}
Let us write
\begin{equation*}
\begin{cases}
X = (|q|^{-x}+|q|^{x})z_2^*z_1,\\
Z = (|q|^{-x+1}+|q|^{x+1})(z_1^*z_1- \frac{|q|^{x}}{|q|^{-x}+|q|^{x}}),\\
Y =  (|q|^{-x}+|q|^{x})z_1^*z_2.
\end{cases}
\end{equation*}
Then some easy but slightly tedious computations using only \eqref{EqSimpl} show that the $X$, $Y$, $Z$ satisfy the relations of the Podle\'{s} sphere $S_{qc(x)}^2$ with $c(x) = (|q|^{x+1}-|q|^{-x-1})^{-2}$, with the parametrization as in \cite{Pod1}, namely $X^*=Y$, $Z^*=Z$, $XZ=q^{2}ZX$, $YZ=q^{-2}ZY$, and
\[
\begin{cases}
YX = (1-|q|^{-x-2}Z)(1+|q|^{x}Z)\\
XY = (1-|q|^{-x}Z)(1+|q|^{x+2}Z).
\end{cases}
\]
Another calculation shows that this gives an equivariant map from $\PW(S_{qc(x)}^2)$ to $\aA_0^0$.

Now, since $\PW(S_{qc(x)}^2)$ has a faithful positive invariant state, and since the action on it is ergodic, the map from $\PW(S_{qc(x)}^2)$ into $\aA_0^0$ is injective. But since the multiplicity graph of $\aA_0^0$ is the same as the one of $\PW(S_{qc(x)}^2)$, they must have the same spectral decomposition. It follows that the above map must actually be an isomorphism.
\end{Exa}

\section{Equivariant morphisms}\label{SecEqMor}

The result of~\cite[Section~7]
{DCY2} can be specialized to the $\SU_q(2)$ case in a form only involving the $q$-fundamental solutions.  We continue to abbreviate $u = u_{1/2}$. If $\X$ is a quantum homogeneous space for $\SU_q(2)$, we write $\cat{D}_\X$ for the $\Rep(\SU_q(2))$-module C$^*$-category of finite equivariant Hilbert C$^*$-modules, and we write the module explicitly by $M$ as in \cite[Definition~2.14]
{DCY2} for emphasis. We denote by $\cat{E}_\X$ the tensor C$^*$-category of endofunctors of $\X$, and $\Rmat^\X$ for the associated $q$-fundamental solution.

Let now $\X$ and $\Y$ be quantum homogeneous spaces over $\SU_q(2)$. If there is an $\SU_q(2)$-morphism from $\Y$ to $\X$ represented by an equivariant homomorphism $f\colon C(\X) \rightarrow C(\Y)$, the induced $\Rep(\SU_q(2))$-homomorphism $(\theta_\#, \psi)$ from $\cat{D}_\X$ to $\cat{D}_\Y$ must make the diagram
\[
\xymatrix{
\theta_\# x \ar[d]_{\theta_\#(M(\Rmat^\X, x))} \ar[r]^-{M(\Rmat^\Y, \theta_\# x)} & u \otimes u \otimes \theta_\# x \ar[dl]^{\psi_{u \otimes u, x}}\\
\theta_\# (u \otimes u \otimes x)
}
\]
commutative for arbitrary object $x \in \cat{D}_\X$.  Let $J$ (resp. $J'$) be an index set of irreducible classes in $\cat{D}_\X$ (resp. in $\cat{D}_\Y$), and $F_{t r} = \Mor(Y_t, F(X_r))$ for $(t, r) \in J' \times J$ be the vector spaces associated with $\theta_\#$.  In terms of the vector spaces $(F_{t r})_{r \in J, t \in J'}$, $(F^\X_{r s})_{r, s \in J}$, and $(F^\Y_{t u})_{t, u \in J}$, the above means that the unitary maps
\[
\psi_{u, t, s} \colon \bigoplus_{r \in J} F_{t r} \otimes F^\X_{r s} \rightarrow \bigoplus_{q \in J'} F^\Y_{t q} \otimes F_{q s}
\]
for $s \in I$ make the diagram
\begin{equation}
\label{EqRepSU2HomFundSolCond}
\xymatrix{
F_{t r} \ar[r]^-{\id \otimes \Rmat} \ar[d]_{\Rmat \otimes \id} & \oplus_s F_{t r} \otimes F^\X_{r s} \otimes F^\X_{s r} \ar[d]^{\id \otimes \psi \circ \psi \otimes \id} \\
\oplus_u F^\Y_{t u} \otimes F^\Y_{u t} \otimes F_{t r} \ar@{^{(}->}[r]& \oplus_{u, s} F^\Y_{t u} \otimes F^\Y _{u s} \otimes F_{s r}
}
\end{equation}
commutative.

The next proposition shows that any $\Rep(\SU_q(2))$-homomorphism functor can be characterized by this commutativity.

\begin{Theorem}\label{ThmSUq2EqvHom}
Let $G$ be a $^*$-preserving functor from $\cat{D}_\X$ to $\cat{D}_\Y$ represented by the $J' \times J$-graded vector space $(F_{t r})_{t \in J, r \in J}$.  Suppose that there exists a unitary map
\begin{equation}\label{EqIntwUntry}
\psi_{t,r}\colon \bigoplus_{s \in J} F_{t s} \otimes F^\X_{s r} \rightarrow \bigoplus_{u \in J'} F^\Y_{t u} \otimes F_{u r}
\end{equation}
for each $t$ and $r$, such that the diagram~\eqref{EqRepSU2HomFundSolCond} is commutative for any $t$ and $r$.  Then $\psi$ can be extended so that $(G, \psi)$ becomes an $\Rep(\SU_q(2))$-homomorphism.
\end{Theorem}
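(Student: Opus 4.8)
The statement is a purely categorical fact: a $\ast$-preserving functor $G\colon\cat{D}_\X\to\cat{D}_\Y$ together with a single family of unitaries $\psi_{t,r}$ satisfying the compatibility square~\eqref{EqRepSU2HomFundSolCond} with the $q$-fundamental solutions can be upgraded to a full $\Rep(\SU_q(2))$-homomorphism $(G,\psi)$. The natural strategy is to exploit the universal property of $\Rep(\SU_q(2))$ from Theorem~\ref{ThmGen}, exactly as was done for functors (rather than bimodule homomorphisms) in Proposition~\ref{PropEq}. Concretely: a $\Rep(\SU_q(2))$-module homomorphism from $\cat{D}_\X$ to $\cat{D}_\Y$ is the same thing as a functor $G$ plus a natural family of module-associativity isomorphisms $\psi_{a,x}\colon G(u_a\otimes x)\to u_a\otimes G(x)$ compatible with the module structure on both sides. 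Since every $u_a$ is a subobject of a tensor power $u^{\smCirct n}$, and since all morphisms in $\Rep(\SU_q(2))$ between such tensor powers are generated by $R_u$, $R_u^*$ and identities (the Temperley--Lieb description), it suffices to define $\psi$ on the objects $u^{\smCirct n}\otimes x$ and check compatibility with the generating morphisms; naturality and coherence for all of $\Rep(\SU_q(2))$ then follow formally by the same extension argument as in the proof of Theorem~\ref{ThmGen}.

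First I would define $\psi_{u^{\smCirct n},r}$ inductively: set $\psi_{u^{\smCirct 0},r}=\id$, and given $\psi_{u^{\smCirct n},\cdot}$ define $\psi_{u^{\smCirct n+1},r}$ by composing $G$ applied to the associativity iso $u^{\smCirct n+1}\otimes x_r\cong u\otimes(u^{\smCirct n}\otimes x_r)$ with $\psi_{u^{\smCirct n}}$ in the second slot and then $\psi_{u,\cdot}$ in the first — i.e. the usual telescoping of the given $\psi$. In the $(F_{tr})$-matrix language this is just iterated application of the unitaries~\eqref{EqIntwUntry}, so each $\psi_{u^{\smCirct n},r}$ is automatically unitary. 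Next I would verify that this family is \emph{natural} with respect to the morphisms $R_u\otimes\id$ and $\id\otimes R_u$ between tensor powers of $u$: this is precisely the content of the commuting square~\eqref{EqRepSU2HomFundSolCond}, inserted at the appropriate tensor positions and conjugated by already-established naturality at lower levels. Because $R_u$ together with tensor products and compositions generate all morphisms between the $u^{\smCirct n}$ (Temperley--Lieb/Theorem~\ref{ThmGen}), naturality against an arbitrary morphism of $\Rep(\SU_q(2))$ restricted to these objects follows. Then I would pass to general objects $x\in\cat{D}_\X$ by choosing, for each irreducible $u_a$, an isometry $u_a\hookrightarrow u^{\smCirct n}$ and transporting $\psi$ along it; independence of the choices is guaranteed by the naturality just established. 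Finally one checks that the resulting $\psi_{a,x}$ satisfies the module-coherence pentagon, which again reduces — via the universal property — to the single relation~\eqref{EqRepSU2HomFundSolCond} and the $q$-fundamental equation~\eqref{EqQFundEqForHalfSpin} for $\Rmat^\X$ and $\Rmat^\Y$.

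**The main obstacle.** The genuinely delicate point is the reduction from \emph{arbitrary} objects and morphisms of $\cat{D}_\X$ to the generators: one must be sure that extending $\psi$ from the subcategory on the $u^{\smCirct n}\otimes x_r$ to all of $\cat{D}_\X$ is well-defined and functorial, i.e. that the two module-structure constraints (the $\psi$-coherence with respect to the tensor unit and to iterated tensor products, and compatibility of $G$ with direct sums and subobjects) hold on the nose rather than merely up to the isomorphisms we chose. This is exactly the step flagged as "it can then be extended to the desired tensor equivalence" in the proof of Theorem~\ref{ThmGen}, and the same machinery applies: because $\Rep(\SU_q(2))$ is semisimple with all hom-spaces spanned by Temperley--Lieb diagrams in $R_u,R_u^*$, a natural transformation defined and checked on the generating object $u$ and the generating morphism $R_u$ extends uniquely and coherently. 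The remaining verifications — that $\psi$ is unitary (immediate from~\eqref{EqIntwUntry}), that $(G,\psi)$ respects the module structure, and that it is $\ast$-preserving (inherited from $G$) — are then routine diagram chases, so I would state them and leave the bookkeeping to the reader, citing the corresponding arguments in~\cite{DCY2} and Theorem~\ref{ThmGen}.
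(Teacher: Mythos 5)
Your proposal follows essentially the same route as the paper: telescope the given unitaries to define $\psi_{u^{\otimes k},x}$ on tensor powers of $u$, check naturality against the generating morphisms via the commuting square~\eqref{EqRepSU2HomFundSolCond}, and then extend to all of $\Rep(\SU_q(2))$ by the standard semisimplicity argument. The only place where the paper is slightly more careful is in handling the adjoint generators: naturality against the $\Rmat^*_{j,j+1}$ does not follow from naturality against the $\Rmat_{j,j+1}$ by composition and tensoring alone, but is obtained by applying the $^*$-operation to the commuting diagram and invoking the unitarity of $\psi$ --- a step you have all the ingredients for (you list $R_u^*$ among the generators and note that $\psi$ is unitary) but do not spell out.
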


\begin{proof}
The assertion implies that there is a natural equivalence $\psi\colon M(u, G -) \rightarrow G M(u, -)$ of functors from $\cat{D}_\X$ to $\cat{D}_\Y$ such that the diagram
\begin{equation}
\label{EqCommDiagFundActionAndG}
\xymatrix{
G x \ar[d]_{\Rmat_{G x}} \ar[r]^{G(\Rmat_{\X})} & G M(u \otimes u, x) \ar[dr]^{G(\phi_{u,u,x})} &\\
M(u \otimes u, G x) \ar[r]_{\phi_{u, u, G x}} & M(u, M(u, G x)) \ar[r]_{\psi \circ \psi} & G M (u, M(u, x))
}
\end{equation}
is commutative.

We first define the isomorphisms
\[
\psi_{u^{\otimes k}, x} \colon M(u^{\otimes k}, G x) \rightarrow G M(u^{\otimes k}, x)
\]
for $k \in \N$ and $x \in \cat{D}_\X$ by
\[
\psi_{u^{\otimes k}, x} = G(\phi^{-k}) \circ \psi \circ M(\psi, M(u, \cdots, x)\cdots) \circ \cdots \circ M(u, M(u \cdots, \psi) \cdots ) \circ \phi^k.
\]

Next, the assumption on $(G, \psi)$ implies that any morphism $f\colon u^{\otimes k} \rightarrow u^{\otimes k + 2}$ in $\Rep(\SU_q(2))$ of the form $\Rmat_{j, j+1}$ satisfies $f \psi_{u^{\otimes k}, x} = \psi_{u^{\otimes k+2}, x} f$ for arbitrary $x$.  Applying the $^*$-operation to the diagram~\eqref{EqCommDiagFundActionAndG} and using the fact that $\psi$ is unitary, we obtain the same statement for the morphisms $\Rmat_{j, j+1}^*$.  Since the morphisms of the form $\Rmat_{j, j+1}$ and $\Rmat_{j, j+1}^*$ for $j \in \N$ generate the full subcategory of $\Rep(\SU_q(2))$ with the objects $(u^{\otimes k})_{k \in \N}$, we conclude that $f \psi_{u^{\otimes k}, x} = \psi^{u^{\otimes k'}, x} f$ for arbitrary $f \in \Mor(u^{\otimes k}, u^{\otimes k'})$ and $x \in \cat{D}_\X$.

Now a standard argument shows that $\psi$ can be extended to $\Rep(\SU_q(2))$.
\end{proof}

By virtue of the above theorem, we can reduce the problem of finding equivariant morphisms between quantum homogeneous spaces to solving a series of quadratic equations on the set of unitaries.  For example, this gives another way to classify the coideals of $\SU_q(2)$ which is easier and more conceptual than the `generators and relations' method of Podle\'{s} and Tomatsu.  In some parts of Tomatsu's classification, various consideration about the (non)existence of coideals with particular graphs needed to be combined each other to say whether a coideal of a particular diagram exists or not.  In our approach, in order to determine the existence of a particular type of coideal, we only need to study edge weights and the quadratic equation on unitaries for that particular graph.

\begin{Exa}\label{ExaPodlesSphEmb}
Let $0 < |q| < 1$.  The embedding of a Podle\'{s} sphere (Example~\ref{ExaPodlesSphGraphs}) into $\CoL{\SU_q(2})$ can be explained as follows.  First,  if there is any such an embedding $\theta$, the Hilbert spaces $(F_m)_{m \in \Z}$ associated with it have to be $1$-dimensional by~\cite[Proposition~7.5]
{DCY2}.  Thus, we need to find unitaries
\[
\psi_m\colon  ( F_{m + 1} \otimes H_{m + 1,m} ) \oplus ( F_{m - 1} \otimes H_{m - 1,m} ) \rightarrow \Hsp_{1/2} \otimes F_m
\]
for $m \in \Z$ satisfying the compatibility condition of Theorem~\ref{ThmSUq2EqvHom}.  The $q$-fundamental solution of the Podle\'{s} sphere for the parameter $x$ can be represented by
\[
- \absv{\frac{q^{x+m+1} + q^{-(x+m+1)}}{q^{x+m} + q^{-(x+m)}}}^{\frac{1}{2}} \xi_{m, m+1} \otimes \xi_{m+1, m} + \sgn(q) \absv{\frac{q^{x+m-1} + q^{-(x+m-1)}}{q^{x+m} + q^{-(x+m)}}}^{\frac{1}{2}} \xi_{m, m - 1} \otimes \xi_{m - 1, m}
\]
where $\xi_{m, m \pm 1}$ are unit vectors in $H_{m, m \pm 1}$.  Fix a unit vector $\xi_m$ in $F_m$ for each $m \in \Z$.  First, let us consider the case $0 < q$.  Comparing the above with the $q$-fundamental solution of $u$ in~\eqref{EqRmatForHalfSpin}, we deduce that $\psi$ has to be of the form
\begin{align*}
\psi(\xi_{m+1} \otimes \xi_{m+1, m}) &= (-1)^m \alpha_m \bar{\alpha}_{m+1} \left ( \lambda \absv{\frac{q^{-(x+m)}}{q^{x+m} + q^{-(x+m)}}}^{\frac{1}{2}} e_2 \otimes \xi_m + \absv{\frac{q^{x+m}}{q^{x+m} + q^{-(x+m)}}}^{\frac{1}{2}} e_1 \otimes \xi_m \right ), \\
\psi(\xi_{m-1} \otimes \xi_{m-1, m}) &= (-1)^m \alpha_m \bar{\alpha}_{m-1} \left ( - \lambda \absv{\frac{q^{x+m}}{q^{x+m} + q^{-(x+m)}}}^{\frac{1}{2}} e_2 \otimes \xi_m + \absv{\frac{q^{-(x+m)}}{q^{x+m} + q^{-(x+m)}}}^{\frac{1}{2}} e_1 \otimes \xi_m \right )
\end{align*}
(if $x = \infty$, we take $(-1)^m \alpha_m \bar{\alpha}_{m+1} \lambda e_2 \otimes \xi_m$ and $(-1)^m \bar{\alpha}_m \alpha_{m-1} e_1 \otimes \xi_m$ respectively) for some unit modulus complex numbers $(\alpha_m)_{m \in \Z}$ and $\lambda$.  The case of $q < 0$ is almost the same, given by
\begin{align*}
\psi(\xi_{m+1} \otimes \xi_{m+1, m}) &= \alpha_m \bar{\alpha}_{m+1} \left ((-1)^m \lambda \absv{\frac{q^{-(x+m)}}{q^{x+m} + q^{-(x+m)}}}^{\frac{1}{2}} e_2 \otimes \xi_m + \absv{\frac{q^{x+m}}{q^{x+m} + q^{-(x+m)}}}^{\frac{1}{2}} e_1 \otimes \xi_m \right ), \\
\psi(\xi_{m-1} \otimes \xi_{m-1, m}) &= \alpha_m \bar{\alpha}_{m-1} \left ( \lambda \absv{\frac{q^{x+m}}{q^{x+m} + q^{-(x+m)}}}^{\frac{1}{2}} e_2 \otimes \xi_m - (-1)^m \absv{\frac{q^{-(x+m)}}{q^{x+m} + q^{-(x+m)}}}^{\frac{1}{2}} e_1 \otimes \xi_m \right ).
\end{align*}
Taking into account of the gauge action (see~\cite[Corollary~7.4]
{DCY2}), we see that there is an free transitive action of $U(1)$ on the embeddings of $\Co{S^2_{q, x}}$ if $x \neq \infty$ (given by the right translation of $U(1)$), and that the embedding is unique for $x = \infty$.
\end{Exa}

\begin{Exa}\label{ExaQRProjEmb}
We can see that category of the graph $D_\infty$ (Example~\ref{ExaQRProjSp}) admits an (essentially unique) $R(\SU_q(2))$-homomorphism functor to the one of $A_{\infty, \infty}$ (Example~\ref{ExaPodlesSphGraphs}) with the parameter $x$ if and only if $x=0$.  Then, the endpoints of $D_\infty$ are mapped to the vertex $0$ of $A_{\infty, \infty}$.  In fact, most of the unitaries to consider for~\eqref{EqIntwUntry} are between $1$-dimensional Hilbert spaces.  This excludes the cases $0 < x$, and also forces the endpoints to be mapped to the vertex $0$.   For $x=0$, the condition forces $\theta_\#(X_m) \simeq Y_{-m} \oplus Y_{m}$ for $1 \le m$, and $\theta_\#(X_*) = \theta_\#(X_{\tilde{*}}) \simeq Y_0$.  Hence $\theta_\#$ is represented by the $1$-dimensional spaces $F_*$, $F_{\tilde{*}}$, $F_m$, $F_{-m}$ for $r = 1, 2, \ldots$, One can verify that the unitary map from $F_* \otimes H_{* 1} \oplus F_{\tilde{*}} \otimes H_{\tilde{*} 1}$ to $H_{0 1} \otimes F_1 \oplus H_{0, -1} \otimes F_{-1}$ expressed by the matrix
\[
\frac{1}{\sqrt{2}}
\begin{pmatrix}
1 & 1\\
-1 & 1
\end{pmatrix}
\]
satisfies the condition of $\psi$. This solution corresponds to the embedding of $C(\R P_{q, \infty}^2)$ into $C(S_{q, 0}^2)$.
\end{Exa}

\begin{Exa}\label{ExaCoidTypeAinf'}
Let $q < 0$, and consider the graph of type $A_\infty'$ (Example~\ref{ExaGraphAinfty'}).  By an analogous argument as above, we see that the Podle\'{s} sphere for $x = \half$ embeds into this one, by the correspondence of vertices $m \mapsto m$ for $m \ge 0$ and $m \mapsto -(m+1)$ for $m < 0$.  A formula similar to the ones of Example~\ref{ExaPodlesSphEmb} gives the embedding of the algebras $(\Co{\X_m})_{m=0}^\infty$ into $\CoL{\SU_q(2)}$.
\end{Exa}

\begin{Exa}\label{ExaQ-1Cids}
Consider the case $q = -1$ and $\X$ is either of the ones in Example~\ref{Exaqminus1AmDn}.  We then find solutions of $(G, \psi)$ for the embeddings $\Co{\X} \rightarrow \CoL{\SU_{-1}(2)}$ coinciding with the results of~\cite{Tom2}.  As an illustration let us consider the graph $D_3'$.  It has three vertices we label by $+, -, *$, where $*$ is the middle one admitting a loop around it.  Since the associated graded vector space has $1$-dimensional components, we pick a unit vector in each component denoted by $\xi_{+ *} \in F_{+ *}$, etc.  Thus, the $q$-fundamental solution can be represented by the vectors
\[
\xi_{* *} \otimes \xi_{* *} + \frac{1}{\sqrt{2}} \xi_{* +} \otimes \xi_{+ *} + \frac{1}{\sqrt{2}} \xi_{* -} \otimes \xi_{- *},\quad \sqrt{2} \xi_{+ *} \otimes \xi_{* +}, \quad \sqrt{2} \xi_{- *} \otimes \xi_{* -}.
\]
The vector spaces $(F_*, F_+, F_-)$ associated with $G\colon \cat{D}_\X \rightarrow \Rep(\SU_q(2))$ has to satisfy $\dim F_* = 2$ and $\dim F_+ = \dim F_- = 1$.  We take a basis $(\xi_1, \xi_2)$ of $F_*$, and unit vectors $\xi_{\pm} \in F_{\pm}$.  Solving the equation for $\psi$, we find that the solutions are the following two families, both parametrized by $\beta \in U(1)$.  The first is
\begin{gather*}
\xi_1 \otimes \xi_{* +} \mapsto e_1 \otimes \xi_+, \quad \xi_2 \otimes \xi_{* +} \mapsto e_2\otimes \xi_+, \quad \xi_+ \otimes \xi_{+ *} \mapsto \frac{1}{\sqrt{2}} (e_2 \xi_1 + e_1 \xi_2),\\
\xi_1 \otimes \xi_{* -} \mapsto e_1 \otimes \xi_-, \quad \xi_2 \otimes \xi_{* -} \mapsto -e_2 \otimes \xi_-, \quad \xi_- \otimes \xi_{- *} \mapsto \frac{1}{\sqrt{2}} (e_2 \xi_1 - e_1 \xi_2),\\
\xi_1 \otimes \xi_{* *} \mapsto \beta e_2 \otimes \xi_2,\quad \xi_2 \otimes \xi_{* *} \mapsto \bar{\beta} e_1 \otimes \xi_1,
\end{gather*}
and the second is
\begin{gather*}
\xi_1 \otimes \xi_{* +} \mapsto e_1 \otimes \xi_+, \quad \xi_2 \otimes \xi_{* +} \mapsto e_2\otimes \xi_+, \quad \xi_+ \otimes \xi_{+ *} \mapsto \frac{1}{\sqrt{2}} (e_2 \xi_1 + e_1 \xi_2),\\
\xi_1 \otimes \xi_{* -} \mapsto -\bar{\beta}^4 e_2 \otimes \xi_-, \quad \xi_2 \otimes \xi_{* -} \mapsto e_1 \otimes \xi_-, \quad \xi_- \otimes \xi_{- *} \mapsto \frac{1}{\sqrt{2}} (-\beta^4 e_1 \xi_1 + e_2 \xi_2),\\
\xi_1 \otimes \xi_{* *} \mapsto \frac{1}{2}((\beta e_1 + \bar{\beta} e_2) \otimes \xi_1 + (-\bar{\beta} e_1 + \bar{\beta}^3 e_2) \otimes \xi_2),\\
\xi_1 \otimes \xi_{* *} \mapsto \frac{1}{2}((\beta^3 e_1 - \beta e_2) \otimes \xi_1 + (\beta e_1 +\bar{\beta} e_2) \otimes \xi_2).
\end{gather*}
\end{Exa}

\section{The C$^*$-algebra of a quantum homogeneous spaces}

It would be very pleasing if certain properties of the $\Co{\X}$ could be deduced directly from their associated fair and balanced $\qn{2}$-graph. For example, can something be said about the type of their associated von Neumann algebras? About their center, or the center of their associated von Neumann algebra? As we will see, there is a satisfying description of their $K$-theory in such terms. For this, we will not need to make use of the cost function. We first however make some comments on nuclearity and C$^*$-algebraic types.

\subsection{C$^*$-algebraic properties of quantum homogeneous spaces}

\begin{Prop} Let $\X$ be a quantum homogeneous space for $\SU_q(2)$, where $0<|q|\leq 1$. Then $\Co{\X}$ is nuclear.\end{Prop}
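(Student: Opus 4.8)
The plan is to show that $\Co{\X}$ is nuclear by realizing it, up to Morita equivalence, as a $\cC^*$-algebra built canonically out of $\Rep(\SU_q(2))$, and then to invoke the nuclearity of the ambient structures. Concretely, recall from Section~5 that for a connected fair and balanced $q+q^{-1}$-graph $(\Gamma,\we)$ with associated module C$^*$-category $\cat{D}^J$, the linking $^*$-algebra $\aA = \oplus_{v,w}\aA_v^w$ carries an ergodic-type coaction of $\PW(\SU_q(2))$ whose corners $\aA_v^v$ complete to the C$^*$-algebras $\Co{\X}$ of the quantum homogeneous spaces attached to each vertex. Since any $\Co{\X}$ is a full corner of the completion of $\aA$, and nuclearity passes to (and from) full corners and is a Morita-invariant property, it suffices to prove that the C$^*$-completion of $\aA$ is nuclear.

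First I would reduce to a single $\Co{\X}$: by \cite[Theorem~6.4]{DCY2} and Proposition~\ref{LemGrphFromQFund}, every quantum homogeneous space over $\SU_q(2)$ arises from such a graph, and all $\aA_v^v$ within one connected graph are equivariantly Morita equivalent, so it is enough to treat one vertex. Next, the decisive structural input is that $\Co{\X}$ is the fixed-point algebra of an action of $\SU_q(2)$ on a larger C$^*$-algebra, or dually that it sits inside a crossed-product-type construction governed by the dual discrete quantum group $\widehat{\SU_q(2)}$. The $\cC^*$-algebra $\CoL{\SU_q(2)}$ itself is nuclear (this is classical, going back to Woronowicz's analysis and the fact that $\SU_q(2)$ is coamenable), and more generally the C$^*$-algebra underlying any ergodic action of a coamenable compact quantum group is nuclear because the spectral decomposition exhibits it as an inductive limit of finite-dimensional pieces glued by the fusion rules, with the Haar state providing a conditional expectation onto a nuclear (here finite-dimensional at each level) subalgebra. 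I would make this precise by using the description of $\Co{\X}$ via the module category: the spectral subspace for $u_n$ is finite-dimensional (its dimension is read off from the graph), and coamenability of $\SU_q(2)$ together with the existence of the faithful invariant state gives an approximation by finite-dimensional subalgebras compatible with a completely positive approximation scheme, hence the CPAP.

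The cleanest route, which I expect to be the one taken, is to invoke a general principle from \cite{DCY2}: a quantum homogeneous space $\X$ over a coamenable compact quantum group $\G$ has nuclear $\Co{\X}$, because $\Co{\X}$ embeds $\G$-equivariantly into, or is a corner of, an algebra of the form $\mathcal{K}(\ell^2) \otimes \CoL{\G}$-type object (via the bimodule $\oplus_w \aA_v^w$ sitting inside a multiple of $\CoL{\SU_q(2)}$), and $\SU_q(2)$ is coamenable, so $\CoL{\SU_q(2)} = \CoL{\SU_q(2)}_{\red}$ is nuclear. The main obstacle — really the only nontrivial point — is establishing the right finiteness/embedding statement that lets coamenability of $\SU_q(2)$ do the work: one must know that the linking algebra $\aA$, or each $\aA_v^w$, embeds completely positively and equivariantly into a ``large enough'' nuclear C$^*$-algebra built from $\CoL{\SU_q(2)}$ (for instance, that $\aA_v^v \subseteq \Co{\X} \otimes \CoL{\SU_q(2)}$ via the coaction and that the coaction admits an equivariant conditional expectation), so that nuclearity descends. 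Everything else — Morita invariance of nuclearity, passage to full corners, the fact that coamenability of $\SU_q(2)$ holds for all $0<|q|\le 1$ (with the degenerate care at roots of unity being irrelevant here since those $q$ are excluded) — is standard and can be cited.
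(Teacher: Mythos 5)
Your proposal ultimately rests on exactly the fact the paper uses: the paper's entire proof is a one-line citation of Boca's result (\cite[Corollary~23]{Boc1}) that the C$^*$-algebra underlying an ergodic action of a coamenable compact matrix pseudogroup is nuclear, applied directly to $\Co{\X}$ with its given ergodic coaction. So the essential inputs you identify --- coamenability of $\SU_q(2)$ for all $0<|q|\leq 1$ together with ergodicity --- are the right ones, while the Morita-equivalence and linking-algebra scaffolding is superfluous (and your ``inductive limit of finite-dimensional pieces'' picture is not literally how Boca's completely positive approximation is built, since the spectral subspaces are not subalgebras); none of this affects correctness once the general result is cited.
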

\begin{proof} This follows from~\cite[Corollary~23]{Boc1}.
\end{proof}

We now turn to a discussion of the type of the $\cC^*$-algebras associated with ergodic actions by $\SU_q(2)$.

Recall that a unital $\cC^*$-algebra is said to be \emph{liminal} if all its irreducible representations are finite-dimensional.  The algebra $C(\SU(2))$ is liminal since it is commutative, and $C(\SU_{-1}(2))$ is also known to be liminal; any of its irreducible representations is at most $2$-dimensional~\cite{Wor3}.

When $\absv{q} < 1$, the $\cC^*$-algebra $\CoL{\SU_q(2)}$ is no longer liminal, but it still is of type I. The following proposition concerns the type of the $\cC^*$-algebra associated with a quantum homogeneous space.

\begin{Prop}\label{PropQHomCAlgType1} Let $\X$ be a quantum homogeneous space for $\SU_q(2)$, where $0<|q|\leq 1$. Then $\Co{\X}$ is of type I if and only if $\Co{\X}$ is equivariantly Morita equivalent to a coideal.
\end{Prop}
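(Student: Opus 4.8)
The plan is to establish the two directions separately, the ``if'' direction being essentially known and the ``only if'' direction being the substantial one. For the ``if'' direction: suppose $\Co{\X}$ is equivariantly Morita equivalent to a coideal $\Co{\Y} = \Co{H\backslash \SU_q(2)}$, i.e. a subalgebra of $\CoL{\SU_q(2)}$ invariant under an appropriate one-sided coaction. Morita equivalence preserves the type of a $\cC^*$-algebra (the linking algebra $\aA = \oplus_{v,w}\aA_v^w$ of Section~5 has the same representation theory as each corner $\aA_v^v$), so it suffices to observe that a coideal of $\CoL{\SU_q(2)}$ is of type~I because it is a $\cC^*$-subalgebra of the type~I algebra $\CoL{\SU_q(2)}$, and $\cC^*$-subalgebras of type~I algebras are again type~I.

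For the ``only if'' direction, the strategy is contrapositive: if $\X$ is \emph{not} equivariantly Morita equivalent to a coideal, I want to show $\Co{\X}$ is not of type~I. By Theorem~\ref{TheoF3} and the discussion of Section~\ref{SecEqMor}, $\X$ corresponds to a connected fair and balanced $q+q^{-1}$-graph $(\Gamma,\we)$, and being Morita equivalent to a coideal means exactly that there is an equivariant $^*$-homomorphism $\CoL{\SU_q(2)}\to \Co{\X}$ for \emph{some} vertex realization, equivalently a $\Rep(\SU_q(2))$-module functor from $\cat{D}_{\SU_q(2)}$ (the category with one vertex) into $\cat{D}_\X$ solving the quadratic equations of Theorem~\ref{ThmSUq2EqvHom}. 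So the hypothesis ``not a coideal'' forces the graph $\Gamma$ to be genuinely infinite and, crucially, to have unbounded spectral multiplicities: if $\Gamma$ had norm $2$ it would be of extended ADE type and Corollary~\ref{CorCostEqGrphNorm} together with the analysis in Theorem~\ref{TheoBound} would make it a coideal; hence $\norm{\Gamma} = \aqn{2} > 2$, forcing $|q|<1$, and moreover the multiplicities $\dim(\Hsp_{vw})$ of the module category must be unbounded (a bounded-multiplicity graph with norm realized as $\aqn{2}$ can only occur for finitely many $|q|$, and the relevant ones all give coideals by the explicit list). The key point is then that unbounded multiplicities in the spectral decomposition of the ergodic action force $\Co{\X}$ to have a non-type-I von Neumann completion: one exhibits, using the generators-and-relations presentation of $\aA$ from Section~5, an increasing sequence of matrix subalgebras whose union generates a $\cC^*$-algebra with a continuum of inequivalent irreducible representations, or alternatively shows the weak closure contains a factor of type $\mathrm{II}$ or $\mathrm{III}$, as is the case for the non-coideal examples of Bichon--De~Rijdt--Vaes~\cite{BDV1}.

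The main obstacle is precisely this last implication: showing directly from the combinatorial data that unbounded multiplicity forces failure of type~I. The cleanest route is probably to reduce to the Morita-equivalent linking algebra $\aA$ and use the universal presentation (the previous Theorem): one identifies inside $\aA$ (or a suitable corner) a free-product-like or Cuntz-algebra-like subquotient built from the isometries $z_{ij}$, for which failure of type~I is standard; the unbounded-multiplicity hypothesis is what provides ``enough room'' for such a subalgebra. I would expect to invoke here the structural results about the $\cC^*$-algebras of the $\SU_q(2)$-ergodic actions (and the fact, used again in Section~7 of the paper via Voigt's work, that these $\cC^*$-algebras are nuclear but can be $\mathrm{O}_\infty$-like). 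A secondary technical point is making sure the Morita equivalence of Section~5 is compatible with the type classification in the non-unital setting, which follows since Morita equivalent $\cC^*$-algebras have isomorphic representation categories and type~I is a property of the representation theory; and that the ``coideal'' realizations exhausted by the extended ADE (and, for $q<0$, the $A_\infty'$) list are precisely the type~I ones, which is where one feeds in Tomatsu's classification together with Theorems~\ref{TheoBound} and~\ref{ThmSUq2EqvHom}.
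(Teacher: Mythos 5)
Your ``if'' direction is fine and coincides with the paper's: a coideal is a $\cC^*$-subalgebra of the type~I algebra $\CoL{\SU_q(2)}$, subalgebras of type~I algebras are type~I (Glimm), and type~I passes through Morita equivalence. The problem is the ``only if'' direction, where your argument has a genuine gap that you yourself flag as ``the main obstacle'': you never actually prove that failure of the coideal property forces $\Co{\X}$ to be non-type-I. Exhibiting ``a Cuntz-algebra-like subquotient'' or ``a type II or III factor in the weak closure'' from the combinatorial data is precisely the hard content, and nothing in your sketch supplies it. Moreover, two of the intermediate claims you lean on are false: the graph norm need not equal $\aqn{2}$ (Proposition~\ref{PropCostVSGrphNorm} only gives $\norm{\Gamma}\le\aqn{2}$, and Example~\ref{ExaCyclicSubgrp} shows the cyclic graphs have norm $2$ while carrying fair and balanced $\aqn{2}$-costs for every $q$), and the multiplicities $\dim(\Hsp_{vw})$ are always uniformly bounded by $\deg(\Gamma)\le\aqn{2}^2$, so they cannot be ``unbounded'' for non-coideal actions.

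The paper avoids all of this by running the implication in the forward direction with one external input you are missing: if $\Co{\X}$ is type~I, then (being unital type~I) it admits a nonzero finite-dimensional representation, and Tomatsu's Theorem~4.10 of~\cite{Tom1} says that an ergodic action of $\SU_q(2)$ admitting such a representation has multiplicity graph of norm at most $2$. Since norm strictly less than $2$ is excluded (those graphs are finite trees, and by Remark~\ref{RemPFArg} a fair and balanced cost on a tree forces $\absv{\twe}=\norm{\Gamma}<2\le\aqn{2}$, a contradiction), the graph has norm exactly $2$, is of extended ADE type, and the argument of Theorem~\ref{TheoBound} realizes it as a coideal. This converts the analytic question into the combinatorial one in a single step; to repair your proof you would need either this route or an honest proof of the structural claim you left open.
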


\begin{proof}
Since the property of being type I passes to sub-C$^*$-algebras by Glimm's Theorem (cf. \cite[IV.1.5.11]{Bla1}), any coideal for $\SU_q(2)$ is of type I. Since this property is also preserved under strong Morita equivalence, the `if' direction of the assertion is proven.

Conversely, assume that $A$ is of type I. Then $A$ has at least one non-zero finite-dimensional representation. By \cite[Theorem 4.10]{Tom1}, the graph associated with this ergodic action must have norm smaller than or equal to $2$, hence equal to $2$. By the arguments as in Theorem~\ref{TheoBound}, it must be equivariantly Morita equivalent to a coideal.
\end{proof}

\begin{Rem}
Quite possibly this proposition remains true also on the von Neumann algebraic level. Of course one direction is trivial, but we have not been able to produce a proof for the converse direction.
\end{Rem}

\subsection{K-theory of quantum homogeneous spaces}

Let us now consider the $K$-theory of quantum homogeneous spaces. The solution of the Baum--Connes conjecture by Nest and Voigt~\cite{Nes1,Voi1} allows us to compute the (usual) K-groups of quantum homogeneous spaces from the equivariant K-groups.  First, let us briefly review their methodology.  In the $\Z$-equivariant KK-category, there is an exact triangle of the form
\[
\xymatrix{
C_0(\Z) \ar[rr]^{\tau - \id} & & C_0(\Z) \ar[ld]\\
& \mathbb{\C} \ar[lu], &
}
\]
where $\tau$ is the automorphism of $C_0(\Z)$ given as the translation by $1 \in \Z$.    By the Baaj--Skandalis duality and the Takesaki--Takai duality, we obtain an exact triangle
\[
\mathbb{C} \xrightarrow{\chi_1-\chi_0} \mathbb{C} \rightarrow \CoL{U(1}) \rightarrow \mathbb{C}[1]
\]
in the $U(1)$-equivariant KK-category.  The induction from $U(1)$ to $\SU_q(2)$ gives yet another exact triangle involving the standard Podle\'s sphere and $\SU_q(2)$,
\begin{equation*}\label{EqSUq2TriFlag}
\CoL{U(1)\backslash\SU_q(2)} \rightarrow \CoL{U(1)\backslash\SU_q(2)}  \rightarrow \CoL{\SU_q(2}) \rightarrow \CoL{U(1)\backslash\SU_q(2)},
\end{equation*}
in the $\SU_q(2)$-equivariant KK-category.  This triangle can be lifted to a $D(\SU_q(2))$-equivariant exact triangle for the Drinfeld double $D(\SU_q(2))$.  The solution of the Baum--Connes conjecture for $\SU_q(2)$ by Voigt states that $\CoL{U(1)\backslash\SU_q(2)}$ is $D(\SU_q(2))$-equivariantly KK-equivalent to $\C \oplus \C$, hence we obtain an exact triangle of the form
\begin{equation}\label{EqTriDGq}
\C \oplus \C \rightarrow \C \oplus \C \rightarrow \CoL{\SU_q(2}) \rightarrow (\C \oplus \C )[1].
\end{equation}

\begin{Theorem}\label{ThmKGrpComput}
Let $\X$ be a quantum homogeneous space over $\SU_q(2)$, and let $\gamma$ be the action of $\pi_{\frac{1}{2}} \in R(\SU_q(2))$ on $K^{\SU_q(2)}_*(\Co{\X})$.  Let $\phi$ be the map on $K_0^{\SU_q(2)}(\Co{\X})^{\oplus 2}$ represented by the matrix
\[
\begin{pmatrix}-\id & -\id \\ \id & \gamma - \id\end{pmatrix}.
\]
Then $K_0(\Co{\X})$ (resp. $K_1(\Co{\X})$) is the cokernel (resp. kernel) of $\phi$.
\end{Theorem}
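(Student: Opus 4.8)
The plan is to apply to the exact triangle~\eqref{EqTriDGq} the functor $\Phi_{\X}$ which sends a $D(\SU_q(2))$-algebra $A$, regarded via its underlying $\SU_q(2)$-action, to $K^{\SU_q(2)}_*(A\otimes\Co{\X})$, where $\SU_q(2)$ acts diagonally on $A\otimes\Co{\X}$ and on $\Co{\X}$ by the ergodic action. The forgetful functor $KK^{D(\SU_q(2))}\to KK^{\SU_q(2)}$, the operation $-\otimes\Co{\X}$, and $K^{\SU_q(2)}_*$ are all exact, so $\Phi_{\X}$ turns~\eqref{EqTriDGq} into a six-term exact sequence; the point is then to identify the three resulting groups and the first map.

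First, $\Phi_{\X}(\C)=K^{\SU_q(2)}_*(\Co{\X})$. By Green--Julg this equals $K_*(\Co{\X}\rtimes\SU_q(2))$, and since $\X$ is a quantum homogeneous space the crossed product $\Co{\X}\rtimes\SU_q(2)$ is, by the decomposition dictated by the module category $\cat{D}$ recalled in Section~1, a $c_0$-direct sum of elementary algebras indexed by the irreducible objects of $\cat{D}$; hence $K^{\SU_q(2)}_1(\Co{\X})=0$ and $\gamma$ is the $R(\SU_q(2))$-module action of $\pi_{1/2}$ on $K^{\SU_q(2)}_0(\Co{\X})$. Second, for $A=\Co{\SU_q(2)}$ with the translation action, the standard absorption of the regular coaction gives $(\Co{\SU_q(2)}\otimes\Co{\X})\rtimes\SU_q(2)\cong \Co{\X}\otimes\mathcal{K}(L^2(\SU_q(2)))$, so $\Phi_{\X}(\Co{\SU_q(2)})\cong K_*(\Co{\X})$ --- the group to be computed. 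Since $\Phi_{\X}(\C\oplus\C)=K^{\SU_q(2)}_0(\Co{\X})^{\oplus2}$ sits in degree $0$, the six-term sequence collapses to
\[
0\to K_1(\Co{\X})\to K^{\SU_q(2)}_0(\Co{\X})^{\oplus2}\xrightarrow{\ \phi\ }K^{\SU_q(2)}_0(\Co{\X})^{\oplus2}\to K_0(\Co{\X})\to0,
\]
with $\phi=\Phi_{\X}(\beta)$ for $\beta$ the first map of~\eqref{EqTriDGq}; thus $K_1(\Co{\X})=\ker\phi$ and $K_0(\Co{\X})=\operatorname{coker}\phi$, and it remains to compute $\phi$.

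Under the forgetful functor, \eqref{EqTriDGq} becomes the $\SU_q(2)$-equivariant triangle obtained by inducing along $U(1)\subset\SU_q(2)$ the $U(1)$-equivariant triangle $\C\xrightarrow{z-1}\C\to\Co{U(1)}\to$, so $\beta$ is the induction of multiplication by $z-1\in R(U(1))=KK^{U(1)}(\C,\C)$. Since $\Co{U(1)\backslash\SU_q(2)}=\mathrm{Ind}_{U(1)}^{\SU_q(2)}\C$, the imprimitivity isomorphism identifies $\Phi_{\X}(\Co{U(1)\backslash\SU_q(2)})$ with $K^{U(1)}_*(\Co{\X})$, compatibly with the $R(\SU_q(2))\to R(U(1))$ module structures and carrying $\beta$ to multiplication by $z-1$. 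Now $R(U(1))=R(\SU_q(2))[z]/(z^2-\pi_{1/2}z+1)$ is free of rank $2$ over $R(\SU_q(2))$ with basis $\{1,z\}$ (restriction to the maximal torus; this is also what underlies Voigt's $D(\SU_q(2))$-equivariant equivalence $\Co{U(1)\backslash\SU_q(2)}\simeq\C\oplus\C$), so by the Künneth theorem in equivariant $K$-theory together with flatness one gets $K^{U(1)}_0(\Co{\X})\cong K^{\SU_q(2)}_0(\Co{\X})^{\oplus2}$ and $K^{U(1)}_1(\Co{\X})=0$, the isomorphism turning multiplication by $z$ into $\left(\begin{smallmatrix}0&-\id\\ \id&\gamma\end{smallmatrix}\right)$ (from $z\cdot1=z$ and $z\cdot z=\pi_{1/2}z-1$). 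Subtracting the identity yields $\phi=\left(\begin{smallmatrix}-\id&-\id\\ \id&\gamma-\id\end{smallmatrix}\right)$, as claimed.

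The main obstacle is the bookkeeping in this last step: one must check that the abstract connecting map of the six-term sequence really corresponds, through the imprimitivity and Künneth identifications, to multiplication by $z-1$ with the stated signs, which requires tracing $\tau-\id\in KK^{\Z}(C_0(\Z),C_0(\Z))$ through Baaj--Skandalis and Takesaki--Takai duality, induction from $U(1)$, and the lift to the Drinfeld double. (Any residual ambiguity, such as $z$ versus $z^{-1}$ or an overall sign, is harmless: $z\leftrightarrow z^{-1}$ is $R(\SU_q(2))$-linear, and $\ker$ and $\operatorname{coker}$ are unchanged under an overall sign, so one still obtains a matrix with the same kernel and cokernel as the stated one.) A secondary point is to justify the absorption and imprimitivity isomorphisms in the quantum-group setting, together with the vanishing of $K^{\SU_q(2)}_1(\Co{\X})$ and the identification of the $\pi_{1/2}$-action as $\gamma$.
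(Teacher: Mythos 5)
Your proposal is correct and follows essentially the same route as the paper: apply the (braided) tensor product with $\Co{\X}$ to the triangle~\eqref{EqTriDGq}, use Green--Julg, Takesaki--Takai and ergodicity to identify the outer terms and kill $K^{\SU_q(2)}_1$, and then compute the middle map as multiplication by $z-1$ on $K^{U(1)}_0(\Co{\X})\simeq R(U(1))\otimes_{R(\SU_q(2))}K^{\SU_q(2)}_0(\Co{\X})$ in the basis $\{1,z\}$. The only cosmetic difference is that where you write ``$A\otimes\Co{\X}$ with the diagonal action,'' the paper uses the braided tensor product $\Co{\X}\boxtimes A$ of an $\SU_q(2)$-algebra with a $D(\SU_q(2))$-algebra, which is the correct formalization of that diagonal action in the non-Kac setting.
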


\begin{proof}
For any $\G$-$\cC^*$-algebra $A$, the braided tensor product $A \boxtimes -$ is an exact functor from the category of $D(\SU_q(2))$-algebras to that of $\SU_q(2)$-algebras.  Thus, applying it to the triangle~\eqref{EqSUq2TriFlag}, we obtain an exact triangle of $\SU_q(2)$-algebras
\[
(A \boxtimes \C)^{\oplus 2} \rightarrow (A \boxtimes \C)^{\oplus 2} \rightarrow (A \boxtimes \CoL{\SU_q(2}))^{\oplus 2} \rightarrow (A \boxtimes \C)^{\oplus 2}[1].
\]
This induces a $6$-term exact sequence of the $K^{\SU_q(2)}$-groups.

Now, we have a natural identification $A \boxtimes \C \simeq A$ as $\SU_q(2)$-algebras.  Moreover, $A \boxtimes \CoL{\SU_q(2})$ is naturally isomorphic to $\Co{\SU_q(2}) \otimes A_{\triv}$, where $A_{\triv}$ denotes is the $\cC^*$-algebra $A$ with the trivial $\CoL{\SU_q(2})$-coaction.  By the Green--Julg isomorphism and the Takesaki--Takai duality, we have $K^{\SU_q(2)}_*(\CoL{\SU_q(2}) \otimes A_{\triv}) \simeq K_*(A)$.  Furthermore, the Green--Julg isomorphism together with the ergodicity on $A$ implies that $K^{\SU_q(2)}_1(A)$ is trivial.  Thus, the above triangle gives an exact sequence of the form
\[
0 \rightarrow K_1(A) \rightarrow K^{\SU_q(2)}_0(A)^{\oplus 2} \rightarrow K^{\SU_q(2)}_0(A)^{\oplus 2} \rightarrow K_0(A) \rightarrow 0.
\]

It remains to identify the map $f$ between $K^{\SU_q(2)}_0(A)^{\oplus 2}$ in the above exact sequence.  First, $K^{\SU_q(2)}_0(A)^{\oplus 2}$ can be identified with
\[
R(U(1)) \otimes_{R(\SU_q(2))} K^{\SU_q(2)}_0(A) \simeq K^{U(1)}_0(A)
\]
because $R(U(1))$ is free of rank $2$ over $R(\SU_q(2))$.  If one follows the argument above the theorem, we see that $f$ can be identified with the action of $z - 1 \in R(U(1))$ on $K^{U(1)}_0(A)$.  Let us take $(1, z)$ as a basis of $R(U(1))$ over $R(\SU_q(2))$.  Then, by $z^2 = (z + z^{-1}) z - 1$, the action of $z$ on $R(U(1))$ can be expressed by the matrix
\[
\begin{pmatrix}
0 & -1\\
1 & \pi_{\frac{1}{2}}
\end{pmatrix}
\in M_2(R(\SU_q(2))).
\]
This shows that $z - 1$ acts by the matrix of assertion.
\end{proof}

\begin{Exa} Let us consider an ergodic action of $\SU_q(2)$ of full quantum multiplicity, cf. \cite{BDV1}. This means that the associated weighted graph has a single vertex, and its loops can be labeled by a multiset $S$ of positive real numbers $\lambda_i$, of even cardinality when $q>0$, such that $S=S^{-1}$ and such that the sum of all elements in $S$ equals $\aqn{2}$. Let us denote by $F_S$ some real-valued matrix such that $F_S^2 = -\sgn(q)$, and such that $S$ forms the set of eigenvalues of $F^*F$ (taking account of multiplicities). Then the ergodic action associated with our weighted graph is the C$^*$-algebra denoted $A_o(F_S,F)$ in \cite{BDV1} (which up to isomorphism does not depend on the concrete choice of $F_S$, and where we recall that $F$ is the matrix \eqref{EqFmat}).

In this case, we have that $K_0^{\SU_q(2)}(A_o(F_S,F)) = \mathbb{C}$, and the matrix $\phi$ from Theorem \ref{ThmKGrpComput} becomes \[\phi = \begin{pmatrix} -1 & -1 \\ 1 & n-1 \end{pmatrix},\] where $n$ is the number of loops in our graph. It follows that, for $n\neq 2$, the $A_o(F_S,F)$ have Cuntz-algebra-like behaviour in that $K_1(A_o(F_S,F))= 0$ and $K_0(A_o(F_S,F)) = \Zz_{n-2}$.
\end{Exa}

\hrulefill
\begin{center}
Department of Mathematics, University of Cergy-Pontoise, UMR CNRS 8088, F-95000 Cergy-Pontoise, France\\
e-mail: Kenny.De-Commer@u-cergy.fr

Department of Mathematics, Ochanomizu University, Otsuka 2-1-1, Bunkyo, 112-8610, Tokyo, Japan\\
e-mail: yamashita.makoto@ocha.ac.jp
\end{center}

\begin{thebibliography}{99}
\raggedright

\bibitem{Ban2} T. Banica, Th\'{e}orie des repr\'{e}sentations du groupe quantique compact libre $O(n)$, \emph{C. R. Acad. Sci. Paris Ser. I Math.} \textbf{322} (1996), 241--244.

\bibitem{Ban4} T. Banica, Representations of compact quantum groups and subfactors, \emph{J. Reine Angew. Math.} \textbf{509} (1999), 167--198.

\bibitem{Ban3} T. Banica and B. Collins, Integration over compact quantum groups, \emph{Publ. Res. Inst. Math. Sci.} \textbf{43} (2) (2007), 277--302.

\bibitem{Bic2} J. Bichon, The representation category of the quantum group of a non-degenerate bilinear form, \emph{Comm. Algebra} \textbf{31} (2003), 4831--4851.

\bibitem{BDV1} J. Bichon, A. De Rijdt and S. Vaes, Ergodic coactions with large quantum multiplicity and monoidal equivalence of quantum groups, \emph{Comm. Math. Phys.} \textbf{262} (2006), 703--728.

\bibitem{Bla1} B. Blackadar, Operator algebras, \emph{Encyclopedia of Mathematical Sciences} \textbf{122}, \emph{Theory of $\cC^*$-algebras and von Neumann algebras; Operator Algebras and Non-commutative Geometry, III}, Springer-Verlag, Berlin (2006).

\bibitem{Boc1} F. Boca, Ergodic actions of compact matrix pseudogroups on $C^*$-algebras, In: \emph{Recent advances in operator algebras (Orl\'{e}ans, 1992)}. \emph{Ast\'{e}risque} \textbf{232} (1995), 93--109.

\bibitem{DCY2} K. De Commer and M. Yamashita, Tannaka--Kre\u{\i}n duality for compact quantum homogeneous spaces. I. General theory, preprint (2012) {\tt arXiv:1211.6552 [math.OA]}.

\bibitem{Eti1} P. Etingof and V. Ostrik, Module categories over representations of $\SL_q(2)$ and graphs, \emph{Math. Res. Lett.} \textbf{11} (1) (2004), 103--114.

\bibitem{Eva1} D.~E. Evans and Y. Kawahigashi, Quantum symmetries on operator algebras, \emph{Oxford Mathematical Monographs}, The Clarendon Press Oxford University Press, New York, 1998, Oxford Science Publications.

\bibitem{Gel1} S. Gelaki, Module categories over affine group schemes, preprint (2012) {\tt arXiv:1209.1155 [math.QA]}.

\bibitem{Good1} F. Goodman, P. de la Harpe and V.F.R. Jones, Coxeter graphs and towers of algebras, \emph{MSRI Publications} \textbf{14}, Springer-Verlag, New York, 1989, x+288pp.

\bibitem{Haj1} P.~M. Hajac, Strong connections on quantum principal bundles, {\em Comm. Math. Phys.} \textbf{182} (3) (1996), 579--617.

\bibitem{Haj2} P.~M. Hajac, R. Matthes, and W. Szyma{\'n}ski, Quantum real projective space, disc and spheres, {\em Algebr. Represent. Theory}, \textbf{6} (2) (2003), 169--192.

\bibitem{Nes1} R. Nest and C. Voigt, Equivariant {P}oincar\'e duality for quantum group actions, \emph{J. Funct. Anal.} \textbf{258} (5) (2010),  1466--1503.

\bibitem{Ocn1} A. Ocneanu, The classification of subgroups of quantum ${\it SU}(N)$, In ``Quantum symmetries in theoretical physics and mathematics (Bariloche, 2000)", \emph{Contemp. Math.} \textbf{294}, 133--159, Amer. Math. Soc., Providence, RI, 2002.

\bibitem{Pim1} M. Pimsner and S. Popa, Entropy and index for subfactors, \emph{Ann. Sci. \'Ecole Norm. Sup. (4)}, \textbf{19} (1) (1986), 57--106.

\bibitem{Pin2} C. Pinzari, The representation category of the Woronowicz quantum group $S_{\mu}U(d)$ as a braided tensor $\cC^*$-category, \emph{Int. J. Math.} \textbf{18} (2) (2007), 113--136.

\bibitem{Pin3} C. Pinzari and J.E. Roberts, Ergodic actions of compact quantum groups from solutions of the conjugate equations, preprint (2008) {\tt arXiv:0808.3326 [math.OA]}.

\bibitem{Pin4} C. Pinzari and J. E. Roberts, Ergodic actions of $S_{\mu}U(2)$ on $\cC^*$-algebras from $II_1$ subfactors, \emph{J. Geom. Phys.} \textbf{60} (2010), 403--416.

\bibitem{Pod1} P. Podle\'{s}, Quantum spheres, \emph{Lett. Math. Phys.} \textbf{14} (1987), 193--202.

\bibitem{Pru1} W. E. Pruitt, Eigenvalues of nonnegative matrices, \emph{Ann. Math. Statistics} \textbf{35} (1964), 1797--1800.

\bibitem{Tom1} R. Tomatsu, Compact quantum ergodic systems, \emph{J. Funct. Anal.} \textbf{254} (2008), 1--83.

\bibitem{Tom2} R. Tomatsu, private note (errata to ``Compact quantum ergodic systems''~\cite{Tom1}).

\bibitem{Tur1} V. Turaev, Quantum invariants of knots and 3-manifolds, \emph{de Gruyter Studies in Mathematics} \textbf{18}, Walter de Gruyter \& Co., Berlin (1994).

\bibitem{Ued1} Y. Ueda, On the fixed-point algebra under a minimal free product-type action of the quantum group {${\rm \SU}\sb q(2)$}, \emph{Internat. Math. Res. Notices},
\textbf{1} (2000), 35--56.

\bibitem{Voi1} C. Voigt, The {B}aum-{C}onnes conjecture for free orthogonal quantum groups, \emph{Adv. Math.} \textbf{227} (5) (2011), 1873--1913.

\bibitem{Was1} A. Wassermann, Ergodic actions of compact groups on operator algebras. III. Classification for $\SU(2)$, \emph{Invent. Math.} \textbf{93} (1988), 309--354.

\bibitem{Wor3} S.L. Woronowicz, Twisted $\SU(2)$ group. An example of a non-commutative differential calculus, \emph{Publ. Res. Inst. Math. Sci.} \textbf{23} (1) (1987), 117--181.

\bibitem{Wor4} S.L. Woronowicz, Tannaka--Krein duality for compact matrix pseudogroups. Twisted $\SU(N)$ groups, \emph{Invent. Math.} \textbf{93} (1988), 35--76.

\bibitem{Yam1} S. Yamagami, A categorical and diagrammatical approach to {T}emperley--{L}ieb algebras, preprint (2004) {\tt arXiv:math/0405267 [math.QA]}.

\end{thebibliography}
\end{document}